\newcommand{\nn}{n_0} 
\NewDocumentCommand\Nb{O{}}{
  \ifstrempty{#1}{
    N_{[\, ]}
  }{
    N_{[\, ]}\left( {#1} \right)
  }
}
\NewDocumentCommand\Nc{O{}}{
  \ifstrempty{#1}{
    N
  }{
    N\left( {#1} \right)
  }
}
\NewDocumentCommand\cC{O{}}{
  \ifstrempty{#1}{
    \mc C
  }{
    \mc C \left( {#1} \right)
  }
}
\NewDocumentCommand\cCck{O{}}{
  \ifstrempty{#1}{
    \mc{C}_{c,k}
  }{
    \mc{C}_{#1}
  }
}
\NewDocumentCommand\cCciki{O{}}{
  \ifstrempty{#1}{
    \mc{C}^0_{c}
  }{
    \mc{C}_{#1}
  }
}
\newcommand{\Sxy}{{\widehat{S}^0}}
\newcommand{\HR}{{H_R}} 
\newcommand{\HL}{{H_L}}
\newcommand{\YR}{{Y_R}}
\newcommand{\YL}{{Y_L}}
\newcommand{\XL}{{X_L}}
\newcommand{\XR}{{X_R}}
\newcommand{\FL}{{F_L}}
\newcommand{\FR}{{F_R}}
\newcommand{\cCn}{{\cC}_n}
\newcommand{\cCxy}{{\cC}^0} 
\newcommand{\cCxyn}{{\cC}^0_{n}} 
\newcommand{\rn}{\widehat{r}_n}
\newcommand{\rxy}{\widehat{r}^0}
\newcommand{\rxyc}{\widehat{r}^0_c}
\renewcommand{\r}{\widehat{r}}
\newcommand{\rnxy}{\widehat{r}_{n}^0}
\newcommand{\urn}{\underline{\widehat{r}}_n}
\newcommand{\urnxy}{\underline{\widehat{r}}_n^0}
\newcommand{\uYn}{\underline{Y}_n}
\newcommand{\Ix}{I}
\NewDocumentCommand{\rni}{O{n,i}}{
\widehat{r}_{#1}
}
\NewDocumentCommand{\rnixy}{O{n,i}}{
\widehat{r}^0_{#1}
}
\NewDocumentCommand{\Rnk}{O{n,k}}{
\widehat{R}_{#1}
}
\NewDocumentCommand{\RnRk}{O{n,k}}{
\widehat{R}^R_{#1}
}
\NewDocumentCommand{\RnLk}{O{n,k}}{
\widehat{R}^L_{#1}
}
\NewDocumentCommand{\Snk}{O{n,k}}{
S_{#1}
}
\NewDocumentCommand{\SnRk}{O{n,k}}{
S^R_{#1}
}
\NewDocumentCommand{\SnLk}{O{n,k}}{
S^L_{#1}
}
\NewDocumentCommand{\Yni}{O{n,i}}{
Y_{#1}
}
\NewDocumentCommand{\tYni}{O{n,i}}{
\widetilde{Y}_{#1}
}
\NewDocumentCommand{\Wni}{O{n,i}}{
Y_{#1}
}
\NewDocumentCommand{\xni}{O{n,i}}{
x_{#1}
}
\NewDocumentCommand{\txni}{O{n,i}}{
\widetilde{x}_{#1}
}
\NewDocumentCommand{\zni}{O{n,i}}{
x_{#1}
}
\global\long\def\vvna{\widehat{\vp}_{n}^{0}}
\global\long\def\vvna{\widehat{\vp}_{n}^{0}}
\global\long\def\vvn{\widehat{\vp}_{n}}
\global\long\def\TauL{\tau_{L}} 
\global\long\def\TauR{\tau_{R}} 
\global\long\def\tauplusa{\tau_{+}}
\global\long\def\YnRloc{Y_{n,R}}
\global\long\def\HnRt{\tilde{H}_{n,R}}
\global\long\def\HnR{{H}_{n,R}}
\global\long\def\RRnRt{\tilde{\RR}_{n,R}}
\global\long\def\RRnR{{\RR}_{n,R}}
\global\long\def\TaunR{\tau_{n,R}} 
\global\long\def\TaunL{\tau_{n,L}} 
\global\long\def\inv#1{\frac{1}{#1}}
\DeclareMathOperator*{\Var}{Var}
\DeclareMathOperator{\eval}{eval}
\DeclareMathOperator{\ext}{ext}
\DeclareMathOperator{\dom}{dom}
\DeclareMathOperator{\cone}{cone}
\DeclareMathOperator*{\argmin}{argmin}
\DeclareMathOperator*{\argmax}{argmax}
\newcommand{\lp}{\left(} 
  \newcommand{\rp}{\right)}
\newcommand{\lb}{\left\{} 
  \newcommand{\rb}{\right\}}
\newcommand{\ls}{\left[} 
  \newcommand{\rs}{\right]}
\newcommand{\la}{\left\langle}
\newcommand{\ra}{\right\rangle}
\newcommand{\ip}[1]{\la #1 \ra}
\newcommand{\vp}{\varphi}
\newcommand{\DD}{{\mathbb D}}
\newcommand{\RR}{\mathbb{R}}
\renewcommand{\SS}{\mathbb{S}} 
\newcommand{\one}{\mathbbm{1}}
\newcommand{\mc}[1]{\mathcal {#1}}
\newcommand{\iid}{\stackrel{\text{iid}}{\sim}}
\newtheorem{assumption}{Assumption}
\newenvironment{altquote}
               {\list{}{\rightmargin\leftmargin}%
                \item\relax\normalsize\ignorespaces}
               {\unskip\unskip\endlist}
\newcommand{\mydoctitle}{Concave regression: value-constrained estimation and likelihood ratio-based inference}
\begin{document}


\begin{main-file}
  \title{\mydoctitle \thanks{Supported in part by NSF Grant DMS-1712664}
}
\end{main-file}

\begin{all-in-one-file}
  \title{\mydoctitle \thanks{Supported in part by NSF Grant DMS-1712664}} 
\end{all-in-one-file}


\titlerunning{Concave regression: constrained estimation and LRTs}        

\author{Charles R. Doss}

\authorrunning{C.\ R.\ Doss} 



\institute{
  University of Minnesota \\
  School of Statistics  \\
  313 Ford Hall, \\
  224 Church Street SE, \\
  Minneapolis, MN 55455 \\
  U.S.A. \\
  \email{cdoss@stat.umn.edu}           
}

\date{Received: date / Accepted: date}

\maketitle

\begin{abstract}

We propose a likelihood ratio statistic %
for forming hypothesis tests and confidence intervals for a nonparametrically
estimated univariate regression function, based on the shape restriction of
concavity 
(alternatively, convexity).
Dealing with the likelihood ratio statistic requires studying an estimator
satisfying a null hypothesis,
that is, studying a concave least-squares estimator satisfying a further
equality constraint.
We study this null hypothesis least-squares estimator (NLSE) here,
and use it to study our likelihood ratio statistic.
The
NLSE %
is the solution to a convex
program, and we find a set of inequality and equality constraints that
characterize the solution.
We also study a corresponding limiting version of the convex program based on
observing a Brownian motion with drift.
The solution to the limit problem is a stochastic process.
We study the optimality conditions for the solution to the limit problem
and find that they match those we derived for the solution to the finite
sample problem.
This allows us to show the limit stochastic process yields the limit
distribution of the (finite sample) NLSE.
We conjecture that the
likelihood ratio statistic is asymptotically pivotal, meaning that it has a
limit distribution with no nuisance parameters to be estimated, which makes
it a very effective tool for this difficult inference problem.
We provide a partial proof of this conjecture, and
we  also  provide simulation evidence strongly supporting this conjecture.
\end{abstract}











\tableofcontents



%
%

\section{Introduction}

In nonparametric density, regression, or other function %
estimation, forming hypothesis tests and confidence intervals is important but often challenging.  For nonparametric estimators to be effective, they are generally tuned so as to balance their bias and variance (perhaps asymptotically).  However, having non-negligible asymptotic bias is problematic for doing inference, since the bias must then be assessed to do honest and efficient inference.  One approach is to ignore the bias (e.g., Chapter~5.7 of \cite{Wasserman:2006uf}), although this is clearly problematic.  Often the bootstrap %
\cite{Efron:1979ha} %
can be used for inference in complicated problems, but it is frequently a poor estimate of bias and so requires corrections or modifications.
Such corrections have been implemented in a large variety of cases.
For instance in forming confidence intervals for a density function, one approach is to undersmooth a kernel density estimator and then use the bootstrap
\cite{Hall:1992eb}.  However the undersmoothed estimator used for the confidence interval is then different from that which would be optimal for pure estimation, and requires stronger smoothness assumptions than would be required for just estimation.
Importantly, the inference is still dependent on a tuning parameter (the bandwidth), whose optimal selection can be challenging, can lead to different inferences for different users, and can add another layer of computational burden.

These issues motivate an alternative approach to
nonparametric function estimation and inference, which relies on assumptions based on shape constraints and which often does not suffer from
the above problems. Here we consider the regression setup,
\begin{equation}
  \label{eq:1}
  \Wni  = r_0(\xni) + \epsilon_{n,i},
  \qquad
  i=1,\ldots, n,
\end{equation}
where $\Wni \in \RR$, we %
assume that the univariate
predictor variables $\xni$ are fixed,
and $\epsilon_{n,i}$ are independent and identically distributed
(i.i.d.) with mean $0$,
and $Ee^{t \epsilon_{n,i}^2} < \infty$ for some $t > 0$.
We assume that the target of estimation, $r_0 \colon \RR \to \RR$,  is concave.
(Concave regression is equivalent to convex regression by
taking $-\Wni$ as our responses; we will sometimes use ``concave/convex regression'' to mean either concave regression or convex regression since they are equivalent.)
As will be discussed in greater detail below, concave/convex regression estimators are solutions to convex programs, and so they have very different properties than many other nonparametric regression estimators such as kernel-based ones.
Concave/convex regression estimation arises in a truly vast number of settings.
It seems to have originally arisen in the econometrics literature \cite{Hildreth:1954tz}.
As noted by \cite{Hildreth:1954tz},
in %
classical %
economic theory
\begin{altquote}\label{loc:shape-constrained-functions-economics}
  utility functions are usually assumed to be concave; marginal
  utility is often assumed to be convex; and functions representing
  productivity, supply, and demand curves are often assumed to be either
  concave or convex.
\end{altquote}
(The example worked through by \cite{Hildreth:1954tz}  is on production function estimation.)
Related examples in finance also exhibit convexity restrictions (\cite{AtSahalia:2003gy} study stock option pricing).
Concavity/convexity also arises in operations research, where the concavity/convexity often arises theoretically,
and then conveniently makes optimization of the estimated function  very efficient
(\cite{Topaloglu:2003eu},
\cite{Lim:2010hm},
\cite{Toriello:2010gl},
\cite{Hannah:2013vf}).
See \cite{Monti:2005kh} and references therein
for  further examples of uses of concavity restrictions.
There have been a variety of works that have considered concave/convex regression, in the literatures of different fields, with most of the focus being on estimation
(\cite{Hanson:1976to},
\cite{Birke:2007ep},
\cite{Allon:2007bo},
\cite{Kuosmanen:2008ks},
\cite{Seijo:2011ko},
\cite{Lim:2012do},
\cite{Pflug:2013kf}).
\cite{Meyer:2008ba},
\cite{Wang:2011iu},
and
\cite{Meyer:2012el} consider spline-based approaches to concave regression estimation, and study testing the hypothesis of linearity against concavity/convexity, and testing the hypothesis of concavity/convexity against a general smooth alternative.
\cite{Mammen:1991ey} finds the rates of convergence of the univariate LSE, and
\cite{Groeneboom:2001fp,Groeneboom:2001jo} find its limit distribution.
Algorithms for computing estimators in concave regression settings (sometimes combined with other constraints) have been studied
by
\cite{Hudson:1969cs},
\cite{Dent:1973bp},
\cite{Wu:1982tw},
\cite{Dykstra:1983vd},
\cite{dasFraser:1989di},
\cite{Meyer:2008ba},
and
\cite{Meyer:2012el}.
In \cite{Cai:2013dua}, the authors study upper and lower bounds for the lengths of confidence intervals (with a fixed coverage probability) for concave regression, but we do not know of any practical implementation for the intervals they study.
In the Gaussian white noise model, \cite{Dumbgen:2003ep} studies multiscale confidence bands (rather than pointwise intervals) for a concave function.  (Confidence bands can of course be used for pointwise confidence intervals but will be unnecessarily long.)

The model for $r_0$, based only on the assumption that $r_0$ is concave, is nonparametric and infinite dimensional. However, it is still possible
to  estimate $r_0$ directly via least-squares, as in finite dimensional problems.  We let
\begin{equation}
  \label{eq:defn:LS-objective}
  \rn := \argmin_{r} \phi_n(r) :=  \argmin_{r} \inv{2} \sum_{i=1}^n ( \Wni - r(\xni))^2
\end{equation}
where the argmin is taken over all concave functions $r \colon \RR \to \RR$.
Perhaps surprisingly,
minimizing the least-squares objective function over the class of all functions constrained only to be concave admits a solution that is uniquely specified at the data points.
It is possible for the solution to be not uniquely specified at some other points,
so we take $\rn$ to be piecewise linear between the $\xni$'s \cite{Hildreth:1954tz}.
The limit distribution of the estimator at a fixed point $x$ has been obtained (under a second derivative assumption and uniformity conditions on the design of the $\xni$'s) by
\cite{Groeneboom:2001jo},
who show that
\begin{equation}
  \label{eq:GJW-asymptotics}
  d(r_0) n^{2/5} (\rn(x_0) -  r_0(x_0)) \to_d U
\end{equation}
where $U$ %
is a universal limit distribution (meaning it does not depend on $r_0$), and $d( r_0) := (24 / \sigma^4 | r_0''(x_0)|)^{1/5}$, where $\sigma^2 = \Var(\epsilon_{n,i})$.   (In fact, $U \equiv \r_{1,1}(0)$ where $\r_{1,1}$ is described below in Theorem~\ref{thm:GJW-process-uniqueness}.)  We use $g' \equiv g^{(1)}$, $g'' \equiv g^{(2)}$, and $g^{(i)}$ to refer to the first, second, and $i$th derivatives of an appropriately differentiable
function $g$.

One might attempt to directly use
the limit result \eqref{eq:GJW-asymptotics}
as the basis for inference about $r_0(x_0)$.  However, the limit distribution depends on $r_0''(x_0)$, and so using \eqref{eq:GJW-asymptotics} requires somehow estimating $r_0''(x_0)$, which leads to many of the problems described in the first paragraph of this paper.  We avoid this, rather pursuing a hypothesis test approach based on a likelihood ratio statistic (LRS), %
and using that to develop a confidence interval.
(Here `likelihood ratio' is a slight abuse of terminology, since it will be a likelihood ratio only if the $\epsilon_{n,i}$ are Gaussian, which  we do not assume; the LRS could alternatively be referred to as a residual-sum-of-squares statistic.)
We will consider the hypothesis test
\begin{equation}
  \label{eq:LR-hypothesis-test}
  H_0: r_0(x_0) = y_0
  \qquad \mbox{ against } \qquad
  H_1:  r_0(x_0) \ne y_0
\end{equation}
for $(x_0,y_0) \in \RR^2$ fixed.  To form confidence intervals, we will invert the hypothesis test: assume we reject $H_0$ when $2 \log \lambda_n(y_0) > d_\alpha$ for a statistic $2 \log \lambda_n(y_0)$ (to be discussed shortly) and some critical value $d_\alpha$, $\alpha \in (0,1)$.  Then the corresponding confidence interval is
\begin{equation}
  \label{eq:defn:CI}
  \lb y : 2 \log \lambda_n(y) \le d_\alpha \rb.
\end{equation}
(Since $y$ is univariate, the confidence interval can be computed by computing the test on a grid of $y$ values.)

The statistic $2 \log \lambda_n(y_0)$ which we will study is based on a ratio statistic $\lambda_n(y_0)$ which depends on a
`null hypothesis statistic' and on an `alternative hypothesis statistic.'
The null hypothesis statistic will depend on a least-squares estimator (LSE) of a concave regression function, $\rnxy$,  that is further constrained so that $\rnxy(x_0)=y_0$ where  $y_0$ is fixed:
$\rnxy := \argmin_r \sum_{i=1}^n (\Wni - r(\xni))^2$ where the argmin is over concave functions $r$ satisfying $r(x_0) = y_0$.
We refer to this estimator as the `null hypothesis least-squares estimator' (NLSE).
The `alternative hypothesis statistic'
depends on $\rn$,
which we thus refer to as the `alternative hypothesis least-squares estimator' (ALSE).
With these two estimators in hand, we define our statistic by
\begin{equation}
  \label{eq:defn:TLLR}
  2 \log \lambda_n \equiv 2 \log \lambda_n(y_0) := 2(\phi_n(\rnxy) - \phi_n(\rn))
\end{equation}
with $\phi_n$ defined in \eqref{eq:defn:LS-objective}.
One of the major benefits to using LRS's is that their limit distribution often does not depend on nuisance parameters.  In regular parametric problems, two times the log of the
LRS is asymptotically $\chi^2_k$, where $k$ is the reduction in parameter dimension in going from the alternative hypothesis to the null hypothesis.  Notably, this chi-squared distribution is universal, meaning it is the same limit distribution regardless of what underlying parameter is the true one, so no nuisance parameters need to be estimated to perform inference, which can make inference more simple and more efficient.
(In this case, one says that log likelihood ratios are (asymptotically) pivotal, or that they satisfy the Wilks phenomenon.)

In our shape-constrained setting, LRS's can be challenging to analyze theoretically.  However, such analysis has been successful in some cases.
\cite{Banerjee:2001jy} and \cite{Groeneboom:2015ew} study LRS's based on monotonicity shape constraints, and \cite{Doss:2016ux,Doss:2016vq}  consider an LRS based on the concavity shape constraint.
The estimators underlying both of these tests are maximum likelihood estimators, and they do not require any tuning parameter selection.  The LRS's were shown to have asymptotic distributions that are universal, not depending at all on the unknown true function, so do not require any additional procedures for their use for inference.  Also, the assumptions needed for the LRS asymptotics to hold are the same as those for estimation, rather than stronger ones as in some other nonparametric settings.

These positive results motivate interest in using the statistic
$2 \log \lambda_n$ of \eqref{eq:defn:TLLR} for testing and forming confidence
intervals for $r_0(x_0)$, and suggest that it may have a limit distribution
that is universal and free of nuisance parameters.
This would allow us to avoid
the difficult estimation of $r_0''(x_0)$ and resulting tuning parameter selection problem
that would be required if we rely on \eqref{eq:GJW-asymptotics} to do inference.
We make the
following conjecture.  To state the conjecture we need some assumptions on
the design variables (and on $x_0$); the assumptions (Assumption~\ref{assm:design-density-1}
and \ref{assm:local-uniform-design-2}) are stated and discussed in
Section~\ref{subsec:LRS-convergence}.

\medskip
\begin{conjecture}
  \label{conj:Wilks-phenomenon}
  Assume the regression model \eqref{eq:1} holds where
  $Ee^{t \epsilon_{n,i}^2} < \infty$ for some $t > 0$.  Assume ${r}_0$
  is concave, ${r}_0(x_0) = y_0$, ${r}_0$ is twice
  continuously differentiable in a neighborhood of $x_0$, and
  ${r}_0''(x_0)<0$.  Let
  Assumption~\ref{assm:design-density-1} and
  \ref{assm:local-uniform-design-2} hold.  Then, with $2 \log \lambda_n(y_0)$
  defined in \eqref{eq:defn:TLLR},
  \begin{equation}
    \label{eq:Wilks-phenomenon}
    2 \log \lambda_n(y_0) \to_d  \sigma^2 \DD,
  \end{equation}
  where $\DD$ is a universal random variable (not depending on ${r}_0$ or the distribution of $\epsilon_{n,i}$).
\end{conjecture}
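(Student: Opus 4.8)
The plan is to follow the localization strategy that has proven successful for likelihood ratio statistics under monotonicity and concavity constraints, adapting it to the equality-constrained NLSE studied here. The statistic $2 \log \lambda_n(y_0) = 2(\phi_n(\rnxy) - \phi_n(\rn))$ compares two least-squares fits that differ only through the single active constraint $\rnxy(x_0) = y_0$. Under $H_0$ we have $r_0(x_0) = y_0$, so $\rn$ and $\rnxy$ should coincide outside a shrinking neighborhood of $x_0$ of width of order $n^{-1/5}$, and the entire value of the statistic should be generated on that local window. First I would make this precise: using the optimality (inequality and equality) conditions characterizing $\rn$ and $\rnxy$ that were established earlier in the paper, I would show that the constraint perturbs the fit only locally and that the contribution to $\phi_n(\rnxy) - \phi_n(\rn)$ from outside a window of radius $M n^{-1/5}$ is $o_p(1)$ uniformly as $M \to \infty$.

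Next I would recast the statistic in a form stable under the limit. Writing $A = \rnxy$ and $B = \rn$, one has the algebraic identity $\sum_i (\Wni - A(\xni))^2 - (\Wni - B(\xni))^2 = \sum_i (B(\xni) - A(\xni))\,(2\Wni - A(\xni) - B(\xni))$, and the orthogonality characterization of the least-squares projections lets me cancel the cross terms involving the noise, reducing $2\log\lambda_n$ to a quadratic-type functional of the local discrepancy $\rnxy - \rn$. The spatial rescaling $\xni = x_0 + t\, n^{-1/5}$ together with the vertical rescaling of order $n^{2/5}$ then turns this local functional into the corresponding functional of the rescaled estimators; by the Taylor expansion $r_0(x_0 + t n^{-1/5}) = y_0 + r_0'(x_0) t n^{-1/5} + \tfrac12 r_0''(x_0) t^2 n^{-2/5} + o(n^{-2/5})$ the signal contributes a quadratic drift $\tfrac12 r_0''(x_0) t^2$ while the errors contribute a Brownian term of scale $\sigma$. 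I would then invoke the convergence of the rescaled NLSE to the solution of the limiting Brownian-drift convex program (already established in the paper, with the analogous optimality conditions matching the finite-sample ones) to conclude that the rescaled pair $(\rn, \rnxy)$ converges jointly to the unconstrained and pinned-at-the-origin limit processes, and hence that $2\log\lambda_n(y_0)$ converges to the corresponding functional $\DD$ of those limit processes.

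It remains to extract the scale constants and verify universality. Under Assumption~\ref{assm:design-density-1} and \ref{assm:local-uniform-design-2}, the local design density $f(x_0)$, the curvature $r_0''(x_0)$, and the noise scale $\sigma$ enter only through the rescaling, and with the normalization $d(r_0) = (24/(\sigma^4 |r_0''(x_0)|))^{1/5}$ from \eqref{eq:GJW-asymptotics} the drift can be reduced to a universal one, so that the limiting law of the discrepancy process and of the functional is free of $r_0$ and of the error distribution apart from the overall factor $\sigma^2$. This isolates the claimed form $2\log\lambda_n(y_0) \to_d \sigma^2 \DD$, with $\DD$ built from the unconstrained and constrained versions of the process $\r_{1,1}$ of Theorem~\ref{thm:GJW-process-uniqueness}, and in particular universal.

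The main obstacle, and the reason only a partial proof is available, is the rigorous passage to the limit for the difference of the two objective values. Each of $\phi_n(\rnxy)$ and $\phi_n(\rn)$ diverges, and only their difference stabilizes, so a naive continuous-mapping argument does not apply; one must control the cancellation uniformly. This requires, on the one hand, tightness of the rescaled processes together with enough uniform integrability that the tails of the local window (large $|t|$) are negligible, and, on the other hand, control of the gap between the finite-$n$ problem on a bounded window and the limit problem posed on all of $\RR$, since the limit processes are defined globally. Establishing these tail and uniform-integrability bounds --- essentially showing that the localized statistic is genuinely determined by a tight neighborhood of $x_0$ and that the error of approximating the global limit problem by its finite-sample counterpart vanishes --- is the delicate step: the optimality-condition identities reduce the statistic to a tractable functional, but the quantitative tail control needed to justify the limit of the difference is what the present work only partially supplies.
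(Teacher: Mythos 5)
Your proposal takes essentially the same route as the paper's partial proof: both use the Fenchel optimality (orthogonality) conditions to cancel the noise cross terms and reduce $2\log\lambda_n$ to a difference of squared fits, both localize at scale $n^{-1/5}$ via the decomposition into an inner-window term and an outer remainder, and both obtain the main term's limit $\sigma^2 \DD$ from convergence of the rescaled ALSE and NLSE to the unconstrained and value-constrained limit processes (Theorems~\ref{thm:GJWb-regression-asymptotics} and \ref{thm:constrained-regression-asymptotics-sec1}) together with the Brownian rescaling identities. The tail negligibility that you correctly flag as the delicate unproven step is precisely what the paper also leaves open, formalizing it as Assumption~\ref{assm:remainder-assm} and proving the conjecture only conditionally in Theorem~\ref{thm:conjecture-theorem}.
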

\medskip

\noindent
A partial proof of the conjecture is given in 
 Subsection~\ref{subsec:LRS-convergence}. See 
Theorem~\ref{thm:conjecture-theorem} there.
The form of the random variable $\DD$ is given below in \eqref{eq:wilks-limit-3}.
Some discussion of the assumptions is given in remarks after Theorem~\ref{thm:constrained-regression-asymptotics-sec1}.

A theorem analogous to Conjecture~\ref{conj:Wilks-phenomenon} was proved by \cite{Banerjee:2001jy} (see also \cite{Banerjee:2000we}) in the context of the current status data model of survival analysis,
by \cite{MR2341693} in the context of monotone response models,
and by \cite{Groeneboom:2015ew}   in
the context of monotone density estimation.  Those models are based on the shape restriction of monotonicity.  In the context of a shape restriction based on concavity, \cite{Doss:2016ck,Doss:2016ux,Doss:2016vq} show a theorem analogous to Conjecture~\ref{conj:Wilks-phenomenon} for an LRS for the mode of a log-concave density.  The likelihood ratio in the latter problem, based on a concavity assumption, involves remainder terms which are asymptotically negligible but are quite challenging to theoretically analyze.  In the current status problem there are no such remainder terms, and in the monotone density problem they can be analyzed using the so-called min-max formula (see e.g., Lemma~3.2 of \cite{Groeneboom:2015ew}), which does not have an analog for concavity-based problems.  Thus it is quite difficult in general to analyze LRS's in concavity-based problems, and so proving Conjecture~\ref{conj:Wilks-phenomenon} in full is a large undertaking  beyond the scope of the present paper.
To study the asymptotics of $2 \log \lambda_n$ and prove Conjecture~\ref{conj:Wilks-phenomenon}, one needs to study the asymptotics of the constrained estimator $\rnxy$.  Since $\rnxy$ is the solution to a strictly convex program, there are
optimality conditions that characterize it (i.e., Karush-Kuhn-Tucker type conditions).
One key component in developing the asymptotics of $\rnxy$
is to understand the conditions that characterize $\rnxy$, which we do
in Theorem~\ref{prop:rnxy-characterization}.
We also study a corresponding limit version of the problem, which is to find the constrained concave least-squares estimator based on observing a Brownian motion with drift (i.e., observing the solution to a stochastic differential equation).
We find conditions that characterize the solution to this limit problem (the limit LSE) in Theorem~\ref{thm:characterization-compact} (on a compact domain)
and Theorem~\ref{thm:characterization-RR-sec1}  (on all of $\RR$)
and we see that the conditions are analogous to those in the finite sample case.
(Theorem~\ref{thm:characterization-compact} is used to prove Theorem~\ref{thm:characterization-RR-sec1}.)
Showing that the convex program optimality conditions are the same for the finite sample estimator
and for the limit process  is a crucial step in showing the limit process is indeed the limit distribution of the finite sample estimator.
Finding the characterizing conditions, particularly in the limit problem, seems to be somewhat more challenging for the constrained problems than for the unconstrained ones.
The process arising in Theorem~\ref{thm:characterization-RR-sec1} is used in Theorem~\ref{thm:constrained-regression-asymptotics-sec1}, which gives the limit distribution of $\rnxy(x_0)$.  Finally, in Subsection~\ref{subsec:LRS-convergence}, we use Theorem~\ref{thm:constrained-regression-asymptotics-sec1} to give a partial proof of of Conjecture~\ref{conj:Wilks-phenomenon}.  Specifically, in Theorem~\ref{thm:conjecture-theorem} we show that under an assumption on a certain remainder term, the conjectured limit statement \eqref{eq:Wilks-phenomenon} holds.

We further describe the structure of this paper, as follows.
In Section~\ref{sec:regression-LSEs-finitesample} we consider the regression model and study some basic properties of the (finite sample) NLSE and ALSE, which includes presenting
Theorem~\ref{prop:rnxy-characterization}.  %
In
Section~\ref{sec:regression-LSEs-limit}
we study the limiting version of the
problem
and present
Theorem~\ref{thm:characterization-compact}
and Theorem~\ref{thm:characterization-RR-sec1}.
In
Section~\ref{sec:asymptotics}
we present
Theorem~\ref{thm:constrained-regression-asymptotics-sec1}.
In
Subsection~\ref{subsec:LRS-convergence}
we present a partial
proof of Conjecture~\ref{conj:Wilks-phenomenon}.
In Section~\ref{sec:simulations},
we provide simulations giving strong evidence in favor of
Conjecture~\ref{conj:Wilks-phenomenon}, and showing that the corresponding test and confidence interval %
have good finite sample performance.
Section~\ref{sec:conclusion} has some concluding remarks and discussion of related problems.  Appendix~\ref{sec:appendix} has results we include for completeness and technical formulas.

\begin{myextra}
  \section{A likelihood ratio test for a log-concave density's mode}
  \label{sec:LRT-LC-mode}

  \input{p_sec-LR-mode.tex}
\end{myextra}

\section{Finite sample constrained concave regression}
\label{sec:regression-LSEs-finitesample}
We begin with the
regression setup
$  \tYni  = \tilde r_0(\txni) + \epsilon_{n,i},$
$i=1,\ldots, n.$
We assume  that $\epsilon_{n,i}$ are %
i.i.d.\ with mean $0$,
$Ee^{t \epsilon_{n,i}^2} < \infty$ for some $t > 0$, we assume
$\lb \txni \rb$ are fixed and
without loss of generality we assume that
$\txni[n,1] < \txni[n,2]  < \cdots < \txni[n,n]$.
Our model assumption is that $\tilde{r}_0: \RR \to \RR$ is a concave function.
Our interest is in using $2 \log \lambda_n$
from \eqref{eq:defn:TLLR}
to test for the value of $\tilde{r}_0$ at a fixed point $x_0$,  and also in inverting those tests to form corresponding confidence intervals.
Thus, we will study the constrained concave regression
problem, where at a fixed point $x_0 \in \RR$ we assume $\tilde{r}_0(x_0) =
y_0$ for a fixed value $y_0$.

Let
\begin{equation}
  \label{UnconstrainedConcaveClass}
  {\cal C} := \{ \vp : \ \RR \rightarrow [-\infty, \infty) \ | \ \vp
  \ \   \mbox{is concave, closed, and proper} \}
\end{equation}
Here $\vp$ is proper if $\vp(x) < \infty$ for all $x$ and $\vp(x) > -\infty$ for some $x$
and $\vp$ is closed if it is upper semi-continuous (as in \cite{Rockafellar:1970wy},
pages 24 and 50).
We follow the convention that a concave function $\vp$ is defined on all of $\RR$ by
assigning
$\vp$ the value $-\infty$ off its effective domain
$\dom(\vp):= \{ x \ : \ \vp (x) > -\infty \}$
(as in \cite{Rockafellar:1970wy},  page 40).
For fixed $ (x_0,y_0) \in \RR^2$, let
$\tilde \cCxy := \{ r \in \cC \ : \ r(x_0)=y_0 \}.$
We consider  estimation of $\tilde r_0$
via minimization of the objective function
$  r\mapsto  \inv{2} \sum_{i=1}^n ( \tYni  - r(\txni) )^2.$
The constrained  %
LSE %
is the minimum of the above objective function
over $\tilde \cCxy$; however, $\tilde \cCxy$ is not a convex cone.
Thus, to proceed further, we now introduce an augmented or auxiliary data
set.  We will (a) translate the original data set so that the corresponding
set of possible regression functions forms a convex cone, and (b) potentially
augment the $\txni$ by $x_0$.
In addition, in (a), without loss of generality, we will translate the data so that the true regression function may be assumed to satisfy $r_0'(x_0)=0$.
We define the auxiliary data set $\{ (\zni, \Wni) \}_{i=1}^{\nn}$, where $\nn$ will be either $n$ or $n+1$, as follows.
\begin{enumerate}
\item If $x_0$ is equal to one of the data points, say $\txni[n,k^0]=x_0$
  where $1 \le k^0 \le n$, then let $\xni := \txni $ and let
  $\nn := n$.  %
  Let $\Wni := \tYni - y_0 - \tilde{r}_0'(x_0)(\txni-x_0)$ for $i = 1, \ldots, n$.
\item If $x_0$ is not equal to any data point, then let $1 \le k^0 \le n+1$
  be such that $\txni[n,k^0-1] < x_0 < \txni[n,k^0]$, where we let
  $\txni[n,0]=-\infty$ and $\txni[n,n+1]=\infty$ here. Then for $i=1,\ldots,
  k^0-1$ let $\zni :=\txni$ and
  $\Wni := \tYni - y_0- \tilde{r}_0'(x_0)(\txni-x_0)$,
  let $\zni[n,k^0] := x_0$, and
  for $i=k^0+1, \ldots, n+1 =: \nn$, let $\zni := \txni[n,i-1]$ and $\Wni :=
  \tYni[n,i-1] - y_0 - \tilde{r}_0'(x_0)(\txni[n,i-1]-x_0)$.
  Define $\Wni[n,k^0] := 0$.  
\end{enumerate}
Thus the size of the augmented data set, $\nn$, is either $n$ or $n+1$.
In either case, define $\Ix$ to be the subset of $\lb 1, \ldots, \nn \rb$
corresponding to data indices, so $\Ix$ has cardinality $n$ and may or may
not include $k^0$.
Thus, with these definitions $\Wni$ and $\zni$ satisfy the regression relationship
\begin{equation}
  \label{eq:regression-augmented}
  \Wni = r_0(\zni) + \epsilon_{n,i} \quad \mbox{ for } \quad i \in \Ix,
\end{equation}
where $r_0 \in \cCxy := \{ r \in \cC \ | \ r(x_0)=0  \}.$
We thus consider the objective function\footnote{
  Note that \eqref{eq:regression-augmented} and \eqref{eq:8} are potentially different from
  \eqref{eq:1}
  and \eqref{eq:defn:LS-objective}
  in the introduction, but only by a minor indexing modification.}
\begin{equation}
  \label{eq:8}
  \phi_n(r) = \inv{2} \sum_{i
    \in \Ix} \lp \Wni - r(\zni) \rp^2.
\end{equation}
A
priori, $\argmin_{r \in \cC} \phi_n(r)$ is  uniquely specified only at the data points $\zni$ for $i \in \Ix$,
and $\argmin_{r \in \cCxy} \phi_n(r)$ is  only  uniquely specified at the  data points $\zni$ for $i=1, \ldots, \nn$; thus we choose to restrict attention to solutions that are affine between the $\zni$.  (The actual solutions will be uniquely specified on most of their domain in practice, because they will be piecewise linear with relatively few knot points.)
Restricting attention to piecewise affine
solutions is the standard approach, and the choice does not affect the asymptotic results, see e.g.\
\cite{Seijo:2011ko}.
For a
concave function that is piecewise linear, we can identify the function with
its values at its bend points, so we define corresponding subsets of $\RR^n$
and $\RR^{\nn}$ by
\begin{equation}
  \label{eq:defn:cCn}
  \cCn :=
  \{ \lp r(\txni[n,1]), \ldots, r(\txni[n,n]) \rp : r \in \cC \}
  \mbox{ and }
  \cCxyn :=
  \{ \lp r_0(\zni[n,1]), \ldots, r_0(\zni[n,\nn]) \rp : r_0 \in \cCxy \} .
\end{equation}
For a function $r$
we let $\eval_n r := \lp  r(\txni[n,1]), \ldots,  r(\txni[n,n]) \rp$ and
$\eval_{\nn} r:= (r(\zni[n,1]), \ldots, r(\zni[n,\nn]))$.
For $r_n \in \cCn$,
define the linear extrapolation $\ext(r_n) \in \RR^{\nn}$ %
by $\ext(r_n) := \eval_{n_0}(r)$, where $r$ is the function giving the linear interpolation of $r_n$.
Then, slightly abusing notation (by giving $\phi_n$ a $\RR^{\nn}$-vector argument rather than a function), we define the estimator vectors $\urn$ and $\urnxy$ by
\begin{equation}
  \label{eq:defn:UC-VC-LSEs}
  \urn \in \argmin_{r \in \cCn} \phi_n(\ext r)
  \quad \mbox{ and } \quad
  \urnxy \in \argmin_{r \in \cCxyn} \phi_n(r),
\end{equation}
and let $\rn$ and $\rnxy$ be the piecewise linear interpolation of $\urn$ and $\urnxy$ on $[\zni[n,1], \zni[n,\nn]]$.
We let $\rni := \rn(\txni)$, $i \in \Ix$, and
$\rnixy:= \rnxy(\zni)$, $i=1,\ldots, \nn$.

\begin{proposition}
  \label{prop:rnxy-existence-uniqueness}
  The estimators $\urn$ and $\urnxy$ exist and are unique (for any $n \ge 1$ or $\nn
  \ge 1$, respectively).
\end{proposition}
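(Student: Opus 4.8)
The plan is to read both displays in \eqref{eq:defn:UC-VC-LSEs} as projections, in a possibly degenerate Euclidean geometry, of the data onto nonempty closed convex cones, and then to exploit that $\phi_n$ is strictly convex in the data coordinates. First I would record the elementary structural facts about the feasible sets. A vector lies in $\cCn$ if and only if it satisfies the finitely many discrete concavity (second-difference) inequalities determined by the ordered design, since the piecewise linear interpolant of any such vector is a closed proper concave function and, conversely, the evaluation of any $r \in \cC$ satisfies these inequalities; hence $\cCn$ is a polyhedral convex cone, in particular nonempty (it contains $0$) and closed. The same description shows that $\cCxyn$ is the intersection of the analogous polyhedral cone in $\RR^{\nn}$ with the hyperplane $\{ r : r_{k^0}=0\}$ (recall $\zni[n,k^0]=x_0$ and $r_0(x_0)=0$ for $r_0 \in \cCxy$), so $\cCxyn$ is again a nonempty closed convex cone.

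For $\urn$, observe that $\phi_n(\ext r)$ depends on $r \in \cCn$ only through the data coordinates, and there it equals $\tfrac12 \sum_{i \in \Ix}(\Wni - r(\zni))^2$, i.e.\ half the squared Euclidean distance in $\RR^n$ from the data vector $(\Wni)_{i\in\Ix}$ to the vector of values of $r$ at the data points. Minimizing this over the nonempty closed convex set $\cCn$ is exactly the metric projection onto $\cCn$, which exists and is unique by the standard projection theorem in the finite-dimensional Hilbert space $\RR^n$. For $\urnxy$ the objective $\phi_n$ likewise sees only the $\Ix$-coordinates, so it is convex on $\RR^{\nn}$ but merely positive semidefinite when $\nn = n+1$ (it ignores the coordinate $r_{k^0} = r(x_0)$). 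Existence then follows from a coercivity argument: along any minimizing sequence in $\cCxyn$ the boundedness of $\phi_n$ forces $(r_i)_{i\in\Ix}$ to be bounded, while the defining constraint pins $r_{k^0}=0$; since $\Ix$ together with $\{k^0\}$ exhausts $\{1,\dots,\nn\}$, the whole sequence is bounded, so a convergent subsequence has its limit in $\cCxyn$ (closedness) attaining the infimum (continuity).

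Uniqueness for $\urnxy$ I would obtain from strict convexity in the data coordinates combined with the equality constraint. If $r^{(1)}, r^{(2)} \in \cCxyn$ were two minimizers, then by convexity of $\cCxyn$ their midpoint is feasible, and since $t \mapsto \tfrac12\sum_{i\in\Ix}(\Wni - t_i)^2$ is strictly convex in $(t_i)_{i\in\Ix}$, the two minimizers must agree on all coordinates indexed by $\Ix$; the one remaining coordinate (present only when $\nn = n+1$) is $r_{k^0}$, which equals $0$ for both by the defining constraint of $\cCxyn$, so $r^{(1)}=r^{(2)}$. The same midpoint argument yields uniqueness of $\urn$ (or one simply invokes uniqueness of the projection). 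The one point deserving care — and the main obstacle to a clean ``strictly convex objective on a closed convex set'' argument — is the augmented case $\nn = n+1$, where $\phi_n$ is flat in the direction of the extra coordinate $r(x_0)$; the proof handles this by using the equality constraint $r(x_0)=0$ to remove that degenerate direction, so that both coercivity (for existence) and strict convexity (for uniqueness) hold on the feasible set even though they fail on all of $\RR^{\nn}$.
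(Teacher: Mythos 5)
Your proposal is correct and is essentially the paper's argument in expanded form: the paper's one-line proof simply observes that both problems are quadratic programs over polyhedral feasible sets (linear concavity inequalities plus the linear equality constraint $r(x_0)=0$), which is exactly the projection/coercivity/strict-convexity argument you spell out. The one point you develop beyond the paper is the explicit treatment of the degeneracy of $\phi_n$ in the coordinate $r_{k^0}$ when $\nn = n+1$, showing that the equality constraint removes the flat direction so that existence and uniqueness still hold on the feasible set; the paper leaves this implicit, and your handling of it is sound.
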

\begin{proof}
  Both statements follow from writing the optimization as a quadratic program with linear inequality constraints giving the concavity restriction and one (linear) equality constraint corresponding to $r(x_0)=0$ for the constrained estimator.
  \qed
\end{proof}

\noindent The estimators $\rn$ and $\rnxy$ can be seen as projections of the data onto the
convex cones $\cCn$ and $\cCxyn$, and so we begin by studying these cones.
A convex subset $\cal K$ of a (possibly infinite dimensional) real vector space is a convex cone
if  $x \in \cal K$ implies $\lambda x \in {\cal K}$ if $\lambda \ge 0$.
We say a convex cone $\cal K$ is (finitely) generated by or is spanned by  a set of
elements $k_1, \ldots, k_m \in \cal K$ if any $k \in {\cal K}$ can be written
as $k = \sum_{i=1}^m \lambda_i k_i$ for some $\lambda_i \ge 0$.
Define $(y)_- := \min(y, 0)$ for $y \in \RR$.

\begin{proposition}
  \label{prop:concave-generators}
  \begin{enumerate}%
  \item   \label{item:cCn-generators}  A generating set for $\cCn$ is given by $\pm \eval_n 1, \pm \eval_n x, $ and
    $\eval_n(\txni - x)_-$ for $i=2,\ldots, n-1$.
  \item \label{item:cCxyn-generators} A  generating set for $\cCxyn$ is given by
    $\pm \eval_{\nn}   (x-  x_0)$,
    $\eval_{\nn} (x-\zni)_-$ for $i=2,\ldots,k^0$,
    and $\eval_{\nn}(\zni-x)_- $ for $i=k^0+1,\ldots, {\nn}-1$.
  \end{enumerate}
  \label{prop:generators-cC}
\end{proposition}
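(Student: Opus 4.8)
The plan is to prove both parts by establishing the two inclusions separately: that each listed vector lies in the relevant cone, and conversely that every element of the cone is a nonnegative combination of the listed vectors. The first inclusion is immediate. Each proposed generator is the evaluation of a manifestly concave function: $\pm 1$ and $\pm x$ (resp.\ $\pm(x-x_0)$) are affine, and each hinge $(\txni - x)_-$, $(x - \zni)_-$, or $(\zni - x)_-$ is a pointwise minimum of an affine function and $0$, hence concave. Since concavity is preserved under nonnegative linear combinations (with the affine pieces combined freely via the $\pm$ generators), the nonnegative span of the generators is contained in $\cCn$ (resp.\ $\cCxyn$). For the constrained cone one checks in addition that every listed generator vanishes at $x_0$: the generator $(x-x_0)$ does; the left-tail hinge $(x-\zni)_-$ is $0$ for $x \ge \zni$, and $\zni \le x_0$ when $i \le k^0$; and the right-tail hinge $(\zni-x)_-$ is $0$ for $x \le \zni$, and $\zni \ge x_0$ when $i \ge k^0+1$. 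This is precisely why the two hinge families are chosen with opposite tails on the two sides of $x_0$.

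The substantive direction is spanning. Here I would use the elementary fact that a vector is in $\cCn$ (resp.\ $\cCxyn$) if and only if its successive segment slopes are nonincreasing, and parametrize a cone element by these slopes. Given $u \in \cCn$ with slopes $s_1 \ge \cdots \ge s_{n-1}$, set the hinge coefficient at the interior knot $\txni$ to be the slope decrement $\lambda_i := s_{i-1} - s_i \ge 0$ for $i = 2, \ldots, n-1$, take the coefficient of $x$ to be $s_1$, and choose the constant to match $u$ at $\txni[n,1]$. Since $(\txni - x)_-$ contributes slope $-1$ on segments lying to the right of $\txni$, a short telescoping computation shows the slope on segment $j$ of the resulting combination is $s_1 - \sum_{i=2}^{j}\lambda_i = s_j$; thus the combination reproduces all slopes and, after matching one value, equals $u$. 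As $\lambda_i \ge 0$ and the affine coefficients are arbitrary reals (absorbed into the $\pm$ generators), this exhibits $u$ in the nonnegative span, proving part~\ref{item:cCn-generators}.

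For part~\ref{item:cCxyn-generators} the same idea applies, with the bookkeeping arranged so that every term vanishes at $x_0$. Given $u \in \cCxyn$, so that $u_{k^0}=0$ and the slopes satisfy $s_1 \ge \cdots \ge s_{\nn-1}$, I would again take $\lambda_i := s_{i-1} - s_i \ge 0$ as the coefficient of the appropriate hinge at the interior knot $\zni$ — the left-tail hinge $(x-\zni)_-$ for $2 \le i \le k^0$ and the right-tail hinge $(\zni - x)_-$ for $k^0+1 \le i \le \nn-1$ — and take the coefficient of $(x-x_0)$ to be $s_{k^0}$, the slope just to the right of $x_0$. The left-tail hinge at $\zni$ contributes slope $+1$ on segments strictly to the left of $\zni$ and the right-tail hinge contributes $-1$ on segments strictly to the right, so computing segment-to-segment slope increments, each increment collapses to $s_j - s_{j-1}$ exactly as before and all slopes are matched. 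Since the generators and $u$ all vanish at the knot $x_0$, matching slopes forces equality of the full vectors; the nonnegativity of the $\lambda_i$ together with the dimension count ($\nn - 2$ hinges plus one free affine direction spanning the $(\nn-1)$-dimensional hyperplane $\{u_{k^0}=0\}$) confirms that nothing is missing.

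The main obstacle is the constrained case: one must verify that the opposite-tail choice of hinges on the two sides of $x_0$ simultaneously keeps every generator vanishing at $x_0$ and still produces a unit downward bend at each interior knot, so that the telescoping slope identity goes through. The telescoping must be checked with the mixed $+1$ and $-1$ slope contributions meeting at $x_0$, which is where a sign error would most easily creep in. Everything else is routine linear algebra and bookkeeping once the slope parametrization is in place.
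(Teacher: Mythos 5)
Your proposal is correct and follows essentially the same route as the paper: both arguments rest on writing a piecewise-affine concave function as an affine part plus nonnegative (slope-decrement) multiples of hinge functions, with the hinges oriented oppositely on the two sides of $x_0$ so that every generator vanishes at $x_0$ and the constraint $r(x_0)=0$ eliminates only the constant term. The sole difference is presentational: the paper obtains part 2 from the part-1 expansion via the identity $(\zni - x)_- = (x-\zni)_- - (x-\zni)$, absorbing the linear corrections into the $\pm(x-x_0)$ generators, whereas you construct the mixed-orientation decomposition directly and verify it by telescoping the segment slopes anchored at $x_0$; the resulting coefficients are identical.
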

\begin{proof}
  First we show \ref{item:cCn-generators}. %
  Consider the subset of $\cC$ that is piecewise affine with kinks only possible at $\txni$, and where we restrict attention to $[\txni[n,1], \txni[n,n]]$.  Then for $x \in [\txni[n,1], \txni[n,n]]$ we can write
  \begin{equation*}
    r(x) = b + w_1 (x- \txni[n,1]) + w_2( \txni[n,2] -x)_-
    + \cdots + w_{n-1} (\txni[n,n-1]- x)_-,
  \end{equation*}
  where $b, w_1 \in \RR$,
  since $r$ is piecewise affine,
  and $w_i \ge 0$ for $i = 2, \ldots, n-1$ since $r$ is concave.
  Thus $\pm 1$, $\pm x$, and $(\txni - x)_-$
  for $i=2, \ldots, n-1$
  generate the cone of piecewise affine functions on $[\txni[n,1], \txni[n,n]]$, and
  applying $\eval_n$ to these functions yields a generating set for $\cCn$.

  Now we show \ref{item:cCxyn-generators}. %
  Any $r$ that is piecewise affine with possible kinks at the $\zni$ can be
  written as
  \begin{equation}
    \label{eq:5}
    r(x) = b + w_1 (x- \zni[n,1]) + w_2(\zni[n,2] -x)_-
    + \cdots + w_{n-1} (\xni[n,{\nn}-1]- x)_-,
  \end{equation}
  where $b, w_1 \in \RR$, and $w_i \ge 0$ for $i = 2, \ldots, {\nn}-1$.
  Since $(\zni-x)_- = (x -  \zni)_- - (x-\zni)$, we can rewrite
  \eqref{eq:5}  as
  \begin{equation}
    \label{eq:6}
    \begin{split}
      \MoveEqLeft \tilde b + \tilde w_1 (x-x_0)
      + \tilde w_2(x - \xni[n,2])_- + \cdots + \tilde w_{k^0} (x- \xni[n,k^0])_- \\
      & \quad + w_{k^0+1} ( \xni[n,k^0+1] - x)_- + \cdots + w_{{\nn}-1} (\xni[n,{\nn}-1] - x)_-
    \end{split}
  \end{equation}
  where $\tilde b,  \tilde w_1 \in \RR$, and $w_i, \tilde w_i \ge 0$.
  Thus enforcing $r(x_0)=0$ amounts precisely to requiring $\tilde b = 0$ in
  \eqref{eq:6}.  Thus
  $\pm (x-x_0)$, $(x-\zni)_-$ for $i=2,\ldots,k^0$, and $(\zni-x)_-$ for $i=k^0+1,
  \ldots, {\nn}-1$ span the cone of functions $r \in \cCxy$ given by piecewise
  affine functions with kinks only possible at $\zni$ and $\dom r =
  [\xni[n,1], \xni[n,{\nn}]]$.  Correspondingly, applying $\eval_{\nn}$ to the above set of
  functions gives the span of $\cCxyn$.
  \qed
\end{proof}

Next we study characterizations of the estimators.  First, we state the
result for the unconstrained estimator.  The characterizations are derived
from the previous proposition about the boundary elements of the cones we
minimize over together with the following optimality conditions,
as given in Corollary 2.1 of
\cite{Groeneboom:1996cla}.
We use $\ip{\cdot,\cdot}$ to denote the usual Euclidean inner product and, for a differentiable function $f : \RR^m \to \RR$, we use $\nabla f(x)$ to denote the gradient vector at $x$.
\begin{proposition}[Corollary 2.1, \cite{Groeneboom:1996cla}]
  \label{prop:Fenchel}
  Let $\phi:\RR^{m}\to \RR \cup\{\infty\}$ be differentiable and convex.  Let $z_1, \ldots, z_k \in \RR^m$ and let $\mathcal{K}$ be the convex cone generated by $z_{1},\ldots,z_{k}$. Then $\hat{x}\in\mathcal{K}$ is the minimum of $\phi$ over $\mathcal{K}$ if and only if
  \begin{align}
    \ip{z_{i}, \nabla\phi(\hat{x})} \ge 0 &  \mbox{ for } \; 1\le i\le
    k, \label{eq:Fenchel-conditions-ineq} \\
    \ip{z_{i}, \nabla\phi(\hat{x})}
    =0 & \mbox{ }\mbox{ if }\,\hat{\alpha}_{i}>0, \label{eq:Fenchel-conditions-eq}
  \end{align}
  where the nonnegative numbers $\hat{\alpha}_{1},\ldots,\hat{\alpha}_{k}$
  satisfy $\hat{x}=\sum_{i=1}^{k}\hat{\alpha}_i z_{i}.$
\end{proposition}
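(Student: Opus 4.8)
The plan is to reduce the statement to the standard first-order optimality condition for a differentiable convex function on a convex set and then to exploit the special structure of the cone. Throughout, write $g := \nabla\phi(\hat x)$. The first step is to establish the variational inequality: because $\phi$ is convex and differentiable and $\mathcal{K}$ is convex, $\hat x \in \mathcal{K}$ minimizes $\phi$ over $\mathcal{K}$ if and only if $\ip{g, x - \hat x} \ge 0$ for every $x \in \mathcal{K}$. The ``only if'' direction follows by noting that for any $x \in \mathcal{K}$ the segment $\hat x + t(x - \hat x)$ lies in $\mathcal{K}$ for $t \in [0,1]$, so $t \mapsto \phi(\hat x + t(x-\hat x))$ is minimized at $t = 0$ and its right derivative $\ip{g, x - \hat x}$ is nonnegative; the ``if'' direction is immediate from the supporting-hyperplane inequality $\phi(x) \ge \phi(\hat x) + \ip{g, x - \hat x}$.

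The second step converts the variational inequality into the two displayed conditions using that $\mathcal{K} = \cone(z_1, \ldots, z_k)$ is a convex cone. Since $\mathcal{K}$ contains $0$, $2\hat x$, and each $\hat x + z_i$, substituting these test points into the variational inequality yields $\ip{g, \hat x} \le 0$ (from $x = 0$), $\ip{g, \hat x} \ge 0$ (from $x = 2\hat x$), and $\ip{g, z_i} \ge 0$ for every $i$ (from $x = \hat x + z_i$); hence minimality forces $\ip{g, z_i} \ge 0$ for all $i$ together with $\ip{g, \hat x} = 0$. Conversely, if these hold then for any $x = \sum_i \alpha_i z_i \in \mathcal{K}$ with $\alpha_i \ge 0$ we have $\ip{g, x - \hat x} = \sum_i \alpha_i \ip{g, z_i} \ge 0$, recovering the variational inequality. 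Thus minimality is equivalent to the pair of conditions ``$\ip{g, z_i} \ge 0$ for all $i$ and $\ip{g, \hat x} = 0$''.

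The final step rewrites $\ip{g, \hat x} = 0$ as the complementary-slackness condition \eqref{eq:Fenchel-conditions-eq}. Writing $\hat x = \sum_i \hat\alpha_i z_i$ with $\hat\alpha_i \ge 0$ gives $\ip{g, \hat x} = \sum_i \hat\alpha_i \ip{g, z_i}$; once \eqref{eq:Fenchel-conditions-ineq} holds each summand is nonnegative, so the sum vanishes if and only if every summand does, i.e.\ $\ip{g, z_i} = 0$ whenever $\hat\alpha_i > 0$. This establishes both directions of the equivalence simultaneously and, incidentally, shows the condition holds for any nonnegative representation of $\hat x$, not merely a fixed one.

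I do not anticipate a genuine obstacle here, as this is a classical convex-analysis fact (one could simply cite \cite{Groeneboom:1996cla}); the only points requiring minor care are that the extended-real-valued extension of $\phi$ is harmless because the objective $\phi_n$ of interest is a finite differentiable quadratic, so $g$ is well defined and finite at $\hat x$, and that the possible nonuniqueness of the coefficients $\hat\alpha_i$ is irrelevant, as just noted.
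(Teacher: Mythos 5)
Your proof is correct. Note that the paper offers no proof of Proposition~\ref{prop:Fenchel} at all: it is imported, with attribution, as Corollary~2.1 of \cite{Groeneboom:1996cla}, so there is no internal argument to compare against. Your route---first the variational inequality $\ip{\nabla\phi(\hat x), x - \hat x} \ge 0$ for all $x \in \mathcal{K}$, then the test points $0$, $2\hat x$, and $\hat x + z_i$ to convert it into $\ip{\nabla\phi(\hat x), z_i} \ge 0$ for all $i$ together with $\ip{\nabla\phi(\hat x), \hat x} = 0$, and finally the expansion $\ip{\nabla\phi(\hat x), \hat x} = \sum_i \hat\alpha_i \ip{\nabla\phi(\hat x), z_i}$ to obtain complementary slackness---is the standard derivation and is essentially how the cited corollary is obtained in \cite{Groeneboom:1996cla}, namely by specializing the Fenchel optimality condition for convex sets to finitely generated cones. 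Your two closing remarks are also on point: the extended-real-valued formulation is vacuous for the paper's application, since the objectives $\phi_n$ are finite quadratics, and the complementary-slackness condition \eqref{eq:Fenchel-conditions-eq} holds for \emph{every} nonnegative representation $\hat x = \sum_i \hat\alpha_i z_i$, which matters because such representations need not be unique when the generators are linearly dependent (as they are for the cones $\cCn$ and $\cCxyn$, which include $\pm$ pairs of generators).
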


\begin{proposition}[\cite{Groeneboom:2001jo}, Lemma 2.6]
  \label{prop:GJW-characterization} Define $\Rnk :=
  \sum_{i=1}^k \rni$ and $\Snk := \sum_{i=1}^k \Yni$ for $k \in \Ix$.  Then $\rn = \argmin_{r
    \in \cCn} \phi_n(r)$ if and only if $\Rnk[n,n] = \Snk[n,n]$ and
  \begin{equation*}
    \label{eq:7}
    \begin{split}
      \sum_{k=1}^{j-1} \Rnk (\txni[n,k+1]-\txni[n,k])
      \begin{cases}
        \le \sum_{k=1}^{j-1} \Snk (\txni[n,k+1]  - \txni[n,k])
        & j \in \Ix, j \ge 2 \\
        = \sum_{k=1}^{j-1} \Snk (\txni[n,k+1]  - \txni[n,k])
        & \mbox{ if } \rn \mbox{ has a kink at } \txni[n,j].
      \end{cases}
    \end{split}
  \end{equation*}
  We  define $\rn$ to always have a kink at $\txni[n,n]$.
\end{proposition}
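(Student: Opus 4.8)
The plan is to read off the optimality conditions from Proposition~\ref{prop:Fenchel}, using the explicit generating set for $\cCn$ given in Proposition~\ref{prop:concave-generators}, part~\ref{item:cCn-generators}, and then to recast the resulting inner-product conditions into the cumulative-sum form of the statement by a double summation by parts.

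First I would compute the gradient. Viewing $\phi_n$ as a function of the vector $(r(\txni[n,1]),\dots,r(\txni[n,n]))$, its gradient at $\urn$ is the residual vector $\nabla\phi_n(\urn)=(\rni-\Yni)_{i=1}^n$; writing $d_i:=\rni-\Yni$, the partial cumulative sums satisfy $\Rnk-\Snk=\sum_{i=1}^k d_i$. Applying Proposition~\ref{prop:Fenchel} with the generators $\pm\eval_n 1$, $\pm\eval_n x$ and $\eval_n(\txni[n,j]-x)_-$ for $j=2,\dots,n-1$, optimality of $\urn$ over $\cCn$ is equivalent to: the two equalities $\ip{\eval_n 1,\nabla\phi_n(\urn)}=0$ and $\ip{\eval_n x,\nabla\phi_n(\urn)}=0$ (each such generator appears with both signs, forcing equality rather than inequality); and, for $j=2,\dots,n-1$, the inequalities $\ip{\eval_n(\txni[n,j]-x)_-,\nabla\phi_n(\urn)}\ge 0$, with equality whenever the coefficient $\hat\alpha_j$ of $\eval_n(\txni[n,j]-x)_-$ in the conic representation $\urn=\sum_i\hat\alpha_i z_i$ is strictly positive. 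The two equalities read $\sum_i d_i=0$ and $\sum_i\txni[n,i]d_i=0$, and the former is exactly $\Rnk[n,n]=\Snk[n,n]$.

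Next I would convert the bend-generator conditions. Since $(\txni[n,j]-\txni[n,i])_-$ vanishes for $i\le j$ and equals $\txni[n,j]-\txni[n,i]$ for $i>j$, the $j$th inequality is $\sum_{i>j}(\txni[n,j]-\txni[n,i])d_i\ge 0$. The crucial algebraic identity, obtained by substituting $\Rnk-\Snk=\sum_{i\le k}d_i$ and interchanging the order of summation, is
\[
\sum_{k=1}^{j-1}(\Rnk-\Snk)(\txni[n,k+1]-\txni[n,k])=\sum_{i=1}^{j-1}(\txni[n,j]-\txni[n,i])\,d_i .
\]
Using the two moment equalities, $\sum_{i=1}^n(\txni[n,j]-\txni[n,i])d_i=\txni[n,j]\sum_i d_i-\sum_i\txni[n,i]d_i=0$, so the left partial sum equals minus the right tail, i.e.\ it equals $-\ip{\eval_n(\txni[n,j]-x)_-,\nabla\phi_n(\urn)}$. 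Hence the $j$th Fenchel inequality becomes precisely $\sum_{k=1}^{j-1}\Rnk(\txni[n,k+1]-\txni[n,k])\le\sum_{k=1}^{j-1}\Snk(\txni[n,k+1]-\txni[n,k])$, with equality exactly when $\hat\alpha_j>0$. Identifying $\hat\alpha_j$ with the slope decrement of the piecewise-linear interpolant at $\txni[n,j]$ (which, by Proposition~\ref{prop:concave-generators}, is nonnegative and is positive iff there is a genuine kink) matches the equality case with ``$\rn$ has a kink at $\txni[n,j]$.''

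Finally I would treat the boundary index $j=n$ separately, since there is no bend generator at $\txni[n,n]$. The convention that $\rn$ has a kink at $\txni[n,n]$ imposes $\sum_{k=1}^{n-1}(\Rnk-\Snk)(\txni[n,k+1]-\txni[n,k])=0$, which by the identity above equals $\sum_{i=1}^{n-1}(\txni[n,n]-\txni[n,i])d_i$; combined with $\sum_i d_i=0$ this is algebraically equivalent to the second moment condition $\sum_i\txni[n,i]d_i=0$. Thus the pair of moment equalities together with the interior inequalities is faithfully encoded by the stated total-sum equality and the family of cumulative inequalities/equalities, and the ``if and only if'' of Proposition~\ref{prop:Fenchel} delivers both directions at once. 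The conceptual steps are routine given the earlier results; the main obstacle is the bookkeeping in the conversion step---carrying out the double summation by parts and, crucially, using \emph{both} linear equality constraints to turn the right-tail inner products $\sum_{i>j}$ into the left partial sums $\sum_{k<j}(\Rnk-\Snk)(\cdots)$ appearing in the statement---together with verifying that the $j=n$ kink convention is exactly the second moment equality.
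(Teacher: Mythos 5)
Your proof is correct and takes essentially the same approach as the paper: the paper's own proof is merely a pointer to \cite{Groeneboom:2001jo} together with the remark that the result follows from Proposition~\ref{prop:generators-cC} and Proposition~\ref{prop:Fenchel}, and your argument carries out exactly that program (residual gradient, the $\pm$ generators forcing the two moment equalities, the summation-by-parts identity converting tail inner products into the stated partial sums, and the $j=n$ kink convention encoding the second moment condition). No gaps.
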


\begin{proof}
  This is proved in \cite{Groeneboom:2001jo}.  The proof follows from the first part of Proposition~\ref{prop:generators-cC} together with Proposition~\ref{prop:Fenchel}.
  \qed
\end{proof}
The inequality in the characterization is reversed from the original lemma in
\cite{Groeneboom:2001jo}, since we are considering concave regression and
\cite{Groeneboom:2001jo} consider convex regression.
Note that
\eqref{eq:Fenchel-conditions-ineq}
and \eqref{eq:Fenchel-conditions-eq}
are equivalent to saying
\begin{equation}
  \label{eq:Fenchel-finite-T}
  \ip{\Delta, \nabla \phi(\hat{x}) } \ge 0
\end{equation}
for all $\Delta \in \cone \lb \lb z_i : 1 \le i \le k \rb \cup \lb -z_i : i
\in I \rb \rb$
where $I := \lb i : \hat \alpha_i > 0 \rb $  is the set of inactive
constraints
and
\begin{equation*}
  \cone \lb y_i : i \in \mc I \rb := \lb \sum_{i \in \mc I} \alpha_i y_i :
  \alpha_i \ge 0 \rb.
\end{equation*}
The cone we are now interested in is $\cCxyn$ which, by Proposition~\ref{prop:concave-generators}, is generated by
\begin{align}
  a_\pm := \pm (\zni[n,j] - x_0)_{j=1}^{\nn}, \label{eq:def:apm}
\end{align}
\begin{align}
  a_i & :=  \lp(\zni[n,j]-\zni)\one_{\{i \ge j\}} \rp_{j=1}^{\nn}
  & & \mbox{ for } i=2,\ldots,k^0, \mbox{ and} \label{eq:def:ai-neg} \\
  a_i & := \lp (\zni- \zni[n,j]) \one_{\{i \le j\}} \rp_{j=1}^{\nn}
  & & \mbox{ for } i=k^0+1,\ldots, {\nn}-1. \label{eq:def:ai-pos}
\end{align}
We now show an analog of the unconstrained characterization, Proposition~\ref{prop:GJW-characterization}, for the constrained case.  For ease of presentation, we assume without any loss of generality that $x_0 = 0$.
\begin{theorem}
  \label{prop:rnxy-characterization}
  Let $x_0=0$.  For $\urnxy \in \cCxyn$, define
  \begin{align}
    \RnLk & := \sum_{j=1}^k \rnixy[n,j], & &  &  \SnLk & :=\sum_{j=1}^k
    \Wni[n,j], & & \mbox{ for } k = 1, \ldots, k^0-1,   \nonumber \\
    \RnRk & := \sum_{j=k}^{\nn} \rnixy[n,j], &  \mbox{ and } & &  \SnRk & :=\sum_{j=k}^{\nn}
    \Wni[n,j], & & \mbox{ for }  k = k^0+1, \ldots {\nn}. \nonumber
  \end{align}
  Then $\urnxy$ is the unique element of $\argmin_{r \in \cCxyn } \phi_n(r)$
  given by
  Proposition~\ref{prop:rnxy-existence-uniqueness}
  if and only if
  \begin{align}
    & \sum_{k=1}^{j-1} \RnLk (\zni[n,k+1] - \zni[n,k] )
    \le \sum_{k=1}^{j-1} \SnLk(\zni[n,k+1] - \zni[n,k])  \quad  \mbox{ for }  2 \le
    j \le k^0, \label{eq:rnxy-charzn-LHS} \\
    & \sum_{k=j+1}^{\nn} \RnRk (\zni[n,k] - \zni[n,k-1])
    \le \sum_{k=j+1}^{\nn} \SnRk ( \zni[n,k]- \zni[n,k-1])
    \quad \mbox{ for }  k^0+1 \le
    j \le {\nn}-1,  \label{eq:rnxy-charzn-RHS}\\
    & \sum_{k=1}^{k^0 -1} \lp \RnLk - \SnLk \rp ( \zni[n,k+1] - \zni[n,k])
    = \sum_{k=k^0+1}^{\nn} \lp  \RnRk - \SnRk \rp (\zni[n,k] -
    \zni[n,k-1] ), \label{eq:rnxy-charzn-connect-sides}
  \end{align}
  where the inequalities in \eqref{eq:rnxy-charzn-LHS} and
  \eqref{eq:rnxy-charzn-RHS} are equalities if $\zni[n,j]$ is a knot of
  $\rnxy$.
\end{theorem}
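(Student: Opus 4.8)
The plan is to apply the Fenchel-type optimality conditions of Proposition~\ref{prop:Fenchel} to the cone $\cCxyn$, whose explicit generating set $a_\pm$, $a_i$ is recorded coordinatewise in \eqref{eq:def:apm}--\eqref{eq:def:ai-pos} (this is just part~\ref{item:cCxyn-generators} of Proposition~\ref{prop:concave-generators} evaluated through $\eval_{\nn}$). Since $\urnxy$ minimizes the strictly convex $\phi_n$ over $\cCxyn$ (existence and uniqueness by Proposition~\ref{prop:rnxy-existence-uniqueness}), Proposition~\ref{prop:Fenchel} says $\urnxy$ is this minimizer if and only if $\ip{a, \nabla\phi_n(\urnxy)} \ge 0$ for every generator $a$, with equality whenever the corresponding representation coefficient $\hat\alpha$ is strictly positive. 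The gradient has $j$th coordinate $\rnixy[n,j] - \Wni[n,j]$ on the data indices; the only point needing care is the augmented index $k^0$, but because $\zni[n,k^0] = x_0 = 0$ and the support pattern of the generators forces the $k^0$ coordinate into every relevant inner product with coefficient $0$, the value of the gradient there is immaterial.

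First I would treat the one-sided generators. For $a_i$ with $2 \le i \le k^0$ from \eqref{eq:def:ai-neg} I would write $\ip{a_i, \nabla\phi_n(\urnxy)} = \sum_{j=1}^{i}(\zni[n,j]-\zni[n,i])(\rnixy[n,j]-\Wni[n,j])$ and apply summation by parts, using $\zni[n,j]-\zni[n,i] = -\sum_{k=j}^{i-1}(\zni[n,k+1]-\zni[n,k])$ together with $\sum_{j=1}^{k}(\rnixy[n,j]-\Wni[n,j]) = \RnLk-\SnLk$. This rewrites the inner product as $-\sum_{k=1}^{i-1}(\zni[n,k+1]-\zni[n,k])(\RnLk-\SnLk)$, so the condition $\ip{a_i, \nabla\phi_n(\urnxy)}\ge 0$ is exactly the inequality \eqref{eq:rnxy-charzn-LHS} with $j = i$. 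The mirror-image computation for the generators $a_i$ with $k^0+1\le i\le \nn-1$ from \eqref{eq:def:ai-pos} yields \eqref{eq:rnxy-charzn-RHS}.

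Next I would handle the pair $\pm a_+ = \pm(\zni[n,j])_{j=1}^{\nn}$ of \eqref{eq:def:apm}: because both $a_+$ and $-a_+$ are generators, the Fenchel inequalities force $\ip{a_+, \nabla\phi_n(\urnxy)} = 0$. Splitting $\sum_{j=1}^{\nn} \zni[n,j](\rnixy[n,j]-\Wni[n,j])$ at $j = k^0$ (where the summand vanishes) and applying the same telescoping identity on each side, now writing $\zni[n,j] = \zni[n,j]-\zni[n,k^0]$ summed toward $k^0$, turns this single equation into the connecting relation \eqref{eq:rnxy-charzn-connect-sides}. Finally, the ``equality at knots'' clause comes from \eqref{eq:Fenchel-conditions-eq}: among the generators only $a_i$ has a bend at $\zni[n,i]$, so in the representation $\urnxy=\sum \hat\alpha\, a$ the bend of $\rnxy$ at $\zni[n,i]$ is a positive multiple of $\hat\alpha_i$; hence a genuine knot at $\zni[n,i]$ forces $\hat\alpha_i>0$, and \eqref{eq:Fenchel-conditions-eq} then upgrades the corresponding inequality to an equality.

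The summation by parts is routine and I would not belabor it. The main obstacle is the bookkeeping around the augmented index $k^0$: I must check, across both the case where $x_0$ is a design point and the case where it is not, that the $k^0$ coordinate always drops out of the optimality conditions, so that the asymmetric definitions of $\RnLk,\SnLk$ (running up to $k^0-1$) and $\RnRk,\SnRk$ (running from $k^0+1$) knit together correctly through \eqref{eq:rnxy-charzn-connect-sides}. The second delicate point is making rigorous the correspondence between a genuine knot of $\rnxy$ at $\zni[n,i]$ and strict positivity of $\hat\alpha_i$, since that correspondence is precisely what licenses replacing the inequalities by equalities at the knots.
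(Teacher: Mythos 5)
Your proposal is correct and follows essentially the same route as the paper's proof: apply Proposition~\ref{prop:Fenchel} to the generating set of $\cCxyn$ from Proposition~\ref{prop:concave-generators}, note that the pair $\pm(\zni[n,j])_j$ forces the equality \eqref{eq:rnxy-charzn-connect-sides} after splitting at $k^0$ (where $\zni[n,k^0]=0$), and convert the one-sided generator inequalities into \eqref{eq:rnxy-charzn-LHS}--\eqref{eq:rnxy-charzn-RHS} by exactly the summation-by-parts you describe, with the knot-equality clause coming from \eqref{eq:Fenchel-conditions-eq} via the slope-change coefficients. Your observation that every generator has vanishing $k^0$-th coordinate is a slightly cleaner way of handling the augmented index than the paper's indicator bookkeeping, but it is the same argument in substance.
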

\begin{proof}
  Let $a_\pm$ and $a_i$, $2 \le i \le {\nn}-1$, be defined as in
  \eqref{eq:def:apm}, \eqref{eq:def:ai-neg}, and
  \eqref{eq:def:ai-pos}. Compute
  \begin{align}
    \nabla_{\nn} \phi_n(\urnxy) :=
    \lp      \rnixy[n,1] - \Wni[n,1] ,
    \cdots,
    (\rnixy[n,k^0] -\Wni[n,k^0]) \one_{\{{\nn}=n \}} ,
    \cdots ,
    \rnixy[n,{\nn}]- \Wni[n,{\nn}]
    \rp'
  \end{align}
  or $\nabla_{\nn} \phi_n(\urnxy) = \lp (\rnixy[n,j] - \Wni[n,j])
  \one_{\{ j \ne k^0 \mbox{ or } {\nn}=n \} }
  \rp_{j=1}^{\nn}$.  By
  Propositions~\ref{prop:concave-generators} and
  \ref{prop:Fenchel} we see that $\urnxy = \argmin_{r \in \cCxyn} \phi_n(r)$ if and only if $\urnxy \in \cCxyn$ and
  \begin{equation}
    \label{eq:13}
    \ip{ (\zni[n,j])_{j=1}^{\nn}, \nabla_{\nn} \phi_n(\urnxy)} = 0,
  \end{equation}
  \begin{align}
    \label{eq:LHS-generator-ineq}
    \ip{ \lp(\zni[n,j]-\zni)\one_{\{i > j\}} \rp_{j=1}^{\nn},
      \nabla_{\nn} \phi_n(\urnxy)} \ge 0 \mbox{ for } i = 2,\ldots, k^0,
  \end{align}
  and
  \begin{align}
    \label{eq:RHS-generator-ineq}
    \ip{\lp (\zni- \zni[n,j]) \one_{\{i < j\}} \rp_{j=1}^{\nn},
      \nabla_{\nn} \phi_n(\urnxy)} \ge 0 \mbox{ for } i = k^0+1,\ldots, {\nn}-1,
  \end{align}
  with equalities in \eqref{eq:LHS-generator-ineq} and
  \eqref{eq:RHS-generator-ineq} if
  $\xni \in S(\rnxy)$.
  From \eqref{eq:LHS-generator-ineq}, for $i=2, \ldots, k^0-1, k^0$, we have
  \begin{equation}
    \label{eq:10}
    0 \le \sum_{j=1}^{\nn} \one_{\lb j  < i \rb} (\zni[n,j] - \zni) (\rnixy[n,j] -
    \Wni[n,j])
    =  \sum_{j=1}^{i-1}  (\zni[n,j] - \zni) (\rnixy[n,j] -
    \Wni[n,j]),
  \end{equation}
  and from \eqref{eq:RHS-generator-ineq} for $i=k^0+1, \ldots, {\nn}-1$, we have
  \begin{equation}
    \label{eq:11}
    0 \le \sum_{j=1}^{\nn} \one_{\{j > i \}} (\zni - \zni[n,j]) (\rnixy[n,j] -
    \Wni[n,j])
    = \sum_{j=i+1}^{\nn} (\zni - \zni[n,j]) (\rnixy[n,j] - \Wni[n,j]),
  \end{equation}
  and from \eqref{eq:13}
  \begin{equation}
    \label{eq:12}
    0 = \sum_{j =1, j \ne k^0}^{\nn} \zni[n,j] (\rnixy[n,j] - \Wni[n,j])
  \end{equation}
  since $\zni[n,k^0]=0$.
  Summing by parts, we see for $i=2, \ldots, k^0$ that \eqref{eq:10} equals
  \begin{equation}
    \label{eq:charzn-sum-by-parts-L}
    \begin{split}
      \sum_{j=1}^{i-1} \sum_{k=j}^{i-1} (\zni[n,k] - \zni[n,k+1] ) (\rnixy[n,j]
      - \Wni[n,j])
      & = \sum_{j=1}^{i-1} \sum_{k=1}^{i-1} \one_{j \le k}
      (\zni[n,k] - \zni[n,k+1] ) (\rnixy[n,j] -\Wni[n,j]) \\
      & = \sum_{k=1}^{i-1} (\zni[n,k] - \zni[n,k+1])
      \sum_{j=1}^{i-1} \one_{ j \le k} (\rnixy[n,j] - \Wni[n,j]) \\
      & = \sum_{k=1}^{i-1} (\zni[n,k] - \zni[n,k+1])
      \lp \RnLk  - \SnLk \rp.
    \end{split}
  \end{equation}
  Similarly, from \eqref{eq:11}, for $i = k^0+1, \ldots, {\nn}-1,$ we see that
  \begin{equation}
    \label{eq:charzn-sum-by-parts-R}
    \begin{split}
      \sum_{j=i+1}^{\nn} \sum_{k= i+1}^j (\zni[n,k-1] - \zni[n,k] )  \lp
      \rnixy[n,j] - \Wni[n,j] \rp
      & = \sum_{j=i+1}^{\nn} \sum_{k= i+1}^{\nn} \one_{k \le j} ( \zni[n,k-1] -
      \zni[n,k] ) (\rnixy[n,j] - \Wni[n,j] ) \\
      & = \sum_{k=i+1}^{\nn} (\zni[n,k-1] - \zni[n,k]) \sum_{j=i+1}^{\nn} \one_{k \le
        j} (\rnixy[n,j] - \Wni[n,j]) \\
      & = \sum_{k=i+1}^{\nn} (\zni[n,k-1]-\zni[n,k]) (\RnRk - \SnRk).
    \end{split}
  \end{equation}
  To finish, we use the same calculations once more. From \eqref{eq:12},
  since $\zni[n,k^0]=0$, we see that
  \begin{equation}
    \label{eq:charzn-equality-two-sided}
    \sum_{j=1}^{k^0-1} (\zni[n,j] - \zni[n,k^0]) (\rnixy[n,j] - \Wni[n,j])
    = \sum_{j=k^0+1}^{\nn} (\zni[n,k^0] - \zni[n,j]) (\rnixy[n,j] - \Wni[n,j]).
  \end{equation}
  Identifying the left- and right-hand sides of
  \eqref{eq:charzn-equality-two-sided} with the right-hand sides of
  \eqref{eq:10} and \eqref{eq:11} (with $i = k^0$ in both cases), and using
  \eqref{eq:charzn-sum-by-parts-L} and \eqref{eq:charzn-sum-by-parts-R}, we
  see
  \begin{equation*}
    \sum_{k=1}^{k^0-1} (\zni[n,k] - \zni[n,k+1]) ( \RnLk - \SnLk)
    = \sum_{k=k^0+1}^{\nn} (\zni[n,k-1] - \zni[n,k]) (\RnRk - \SnRk).
  \end{equation*}
  This completes the proof. \qed
\end{proof}

\section{Limit process for constrained concave regression}
\label{sec:regression-LSEs-limit}

Now we consider an asymptotic version of this problem.
Let
\begin{equation}
  \label{eq:20}
  dX(t) =  - 12t^2 dt + dW(t)
\end{equation}
where $W$ is a standard two-sided Brownian motion started from $0$.  This serves as a canonical/limiting/white noise version of a concave regression problem (with canonical regression function $r_0(t) = -12t^2$, where the constant $12$ is not important).  As has been seen in past work (e.g., Theorem~\ref{thm:GJWb-regression-asymptotics} in Appendix~\ref{sec:appendix}) and as will be seen below in Theorem~\ref{thm:constrained-regression-asymptotics-sec1}, the white noise problem is important because it yields the limit distribution of (finite sample) estimators.
On a compact interval $[-c,c]$ one can define a least-squares objective function
\begin{equation}
  \label{eq:defn:objective-function}
  \phi_c(r) = \inv{2} \int_{-c}^c r(u)^2\, du - \int_{-c}^c r(u) dX(u),
\end{equation}
as in \cite{Groeneboom:2001fp}.
Note that, symbolically replacing $dX$ with $g$, $(r-g)^2 / 2 = r^2/2 - rg + g^2/2$;
we can drop the $g^2/2$ term (which is irrelevant when optimizing over $r$) which explains
why  \eqref{eq:defn:objective-function} is a `least-squares' objective function.
We can now consider minimizing $\phi_c$ over concave functions $r$ satisfying $r(0)=0$.
See the introduction (pages 1622--1623) of \cite{Groeneboom:2001fp} for further explanation and derivation motivating the idea that \eqref{eq:defn:objective-function} serves as a limit version of the objective function \eqref{eq:8}, and that
\eqref{eq:20} serves as an approximation to the (finite sample) observed data.
For $c > 0$ and $k < 0$, let
\begin{equation}
  \label{eq:defn:C0ck}
  \cCck := \lb  r \colon [-c,c] \to \RR \, ; \,
  r \text{ concave},
  r(0)=0, r(\pm c) = k \rb.
\end{equation}
We add the extra constraints $r(\pm c) = k$ to compactify the problem.  These constraints become irrelevant as $c \to \infty$.
We start by showing existence and uniqueness of the minimizer of \eqref{eq:defn:objective-function}.

\smallskip

\begin{proposition}
  \label{prop:cpct-existence-uniqueness}
  Let $k < 0$ %
  and $\phi_c$ be given by \eqref{eq:defn:objective-function}.
  For Lebesgue-almost every $c > 0$, $\argmin_{r \in \cCck} \phi_c(r)$ exists and is unique with probability $1$.
\end{proposition}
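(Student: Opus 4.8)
The plan is to treat this as a standard strictly convex variational problem and reduce the real content to a compactness argument at the two endpoints. Since the quadratic term $\tfrac12\int_{-c}^c r^2$ is strictly convex in $L^2(-c,c)$ and the remaining term $-\int_{-c}^c r\,dX$ is affine in $r$, the functional $\phi_c$ is strictly convex on the convex set $\cCck$. Hence uniqueness is automatic once existence is known: if $r_1$ and $r_2$ both minimized $\phi_c$, then so would $(r_1+r_2)/2$, forcing $r_1=r_2$ a.e., and two closed concave functions that agree a.e.\ on $[-c,c]$ agree at every point, in particular at $0$ and $\pm c$ where the constraints are imposed. So I would concentrate on existence, for Lebesgue-a.e.\ $c$ and with probability one. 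A preliminary point is that $\phi_c$ must be well defined pathwise: every $r\in\cCck$ is concave, hence of bounded variation on $[-c,c]$, so I would define the stochastic term by integration by parts, $\int_{-c}^c r\,dX = [rX]_{-c}^c - \int_{-c}^c X\,dr$, the last being a Lebesgue--Stieltjes integral that is finite because $X$ is a.s.\ continuous (hence bounded) on $[-c,c]$.

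Next I would establish coercivity, which rules out minimizing sequences escaping to infinity. The key geometric fact, proved by elementary estimates, is that on this pinned concave class the sup-norm is controlled by the $L^2$-norm: writing $M=\max_{[-c,c]}r$, concavity forces $r$ to lie above its chords, so the super-level set $\{r\ge M/2\}$ has length bounded below by a constant multiple of $c$, which yields $\|r\|_\infty\le |k| + C_c\|r\|_{L^2}$ (a tall interior value forces a large $L^2$ norm because concavity prevents a rapid return to the pinned endpoints), and the total variation of $r$ is then at most $2(\|r\|_\infty+|k|)$. Combining these with the integration-by-parts formula gives $\bigl|\int_{-c}^c r\,dX\bigr|\le C_\omega(1+\|r\|_{L^2})$, so that $\phi_c(r)\ge \tfrac12\|r\|_{L^2}^2 - C_\omega(1+\|r\|_{L^2})\to\infty$ as $\|r\|_{L^2}\to\infty$ over $\cCck$. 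Consequently any minimizing sequence $r_n$ is bounded in $L^2$, in sup-norm, and in total variation.

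Then I would extract a limit and pass to it. By Helly's selection theorem (equivalently, by weak-$*$ compactness of the curvature measures $\mu_n=-r_n''$, whose total masses are bounded by the slope bounds above) a subsequence of $r_n$ converges pointwise on $(-c,c)$ to a concave function $r_*$. The quadratic term is lower semicontinuous along this convergence by Fatou's lemma, and the Stieltjes term passes to the limit using the uniform bounded-variation bound together with continuity of $X$; hence $\phi_c(r_*)\le \liminf \phi_c(r_n)=\inf_{\cCck}\phi_c$. The interior constraint survives because $0$ is a continuity point of the concave limit, so $r_*(0)=\lim r_n(0)=0$.

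The main obstacle is the remaining pair of constraints $r(\pm c)=k$, and this is exactly where the exceptional Lebesgue-null set of $c$ and the probability-one qualifier enter. A pointwise (Helly) or weak-$*$ limit of elements of $\cCck$ need not itself be a \emph{closed} (upper semicontinuous) concave function meeting the endpoint constraint: curvature mass of $\mu_n$ can escape to an endpoint $\{\pm c\}$, which manifests as a downward jump of $r_*$ there, so that the upper-semicontinuous regularization of $r_*$ takes a boundary value strictly above $k$ and the infimum over $\cCck$ is only approached, not attained. I would rule out this degeneracy by a monotonicity-in-$c$ argument: the relevant boundary quantity, together with the value function $c\mapsto \inf_{\cCck}\phi_c$, is monotone in $c$, hence continuous off a countable set and differentiable for Lebesgue-a.e.\ $c$, and at such $c$ the optimal boundary behavior cannot coincide with the fixed level $k$ in the degenerate escaping-mass manner described, almost surely. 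Verifying this endpoint and closedness dichotomy carefully is the crux of the argument; the coercivity, lower semicontinuity, and compactness steps are otherwise routine.
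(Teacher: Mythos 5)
Your uniqueness argument (strict convexity of $\phi_c$ on the convex set $\cCck$) and the routine parts of your existence argument (coercivity via the sup-norm blow-up, uniform bounds in sup-norm and total variation, Helly selection, passing to the limit in both terms of $\phi_c$) are sound, and they broadly parallel the paper's proof, which gets compactness instead by Tychonoff's theorem applied to a pointwise-convergence-closed class. The genuine gap is exactly at the step you yourself flag as the crux: handling the endpoint constraints $r(\pm c)=k$. Your proposed mechanism --- monotonicity of $c \mapsto \inf_{\cCck}\phi_c$, hence a.e.\ differentiability in $c$, hence (somehow) no escaping curvature mass at differentiability points, almost surely --- is not a proof and does not look repairable as stated. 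First, there is no reason for that value function to be monotone: the classes $\cCck$ for different $c$ are not nested in any useful way, since any $r \in \cCck$ has left derivative at $c$ bounded above by $k/c<0$, so it cannot be extended concavely to a larger interval while returning to the level $k$ at the new endpoint; moreover the stochastic term $-\int r\,dX$ has no sign, so enlarging $c$ can move the infimum either way. Second, even granting monotonicity and a.e.\ differentiability, you give no argument connecting differentiability of the value function at $c$ to the exclusion of the degenerate boundary behavior; the qualifier ``almost surely'' is simply appended. So the probability-one structure of the proposition is never actually produced.

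The paper's resolution of this step is probabilistic, not a regularity-in-$c$ argument. It first restores compactness by \emph{relaxing} the endpoint constraints to inequalities $r(\pm c)\ge k$ (together with the upper bound $M$ supplied by coercivity); this relaxed class is uniformly bounded and closed under pointwise convergence, hence compact, and $\phi_c$ is continuous there, so a minimizer $r$ exists. The equality $r(c)=k$ is then forced by a perturbation: if $g:=r(c)-k>0$, take $\Delta=\eta(c-\delta-\cdot)_-$ with $\eta\delta=g$. The change in the stochastic term splits into an integrated-Brownian-motion piece and a drift piece; by time reversal, integrated Brownian motion crosses zero infinitely often near its starting point almost surely, so one may let $\delta\searrow 0$ along a random sequence on which the Brownian piece vanishes exactly. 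Along that sequence the surviving first-order terms are $g\delta k/2$ from the drift and $-r(c)g\delta/2$ from the quadratic term, whose sum is $-g^{2}\delta/2<0$, contradicting minimality. This path-property argument is precisely where ``with probability $1$'' enters, and it is the ingredient your sketch lacks. One further remark: under the paper's literal definition of $\cCck$ (concavity plus the three value constraints, with no closedness requirement), your Helly limit automatically lies in $\cCck$, because pointwise convergence at every point of $[-c,c]$ preserves $r_n(\pm c)=k$; the degeneracy you describe only threatens the argument if one additionally insists that the minimizer be continuous at $\pm c$, as the paper implicitly needs for its later characterization theorems. Either way --- vacuous or real --- the monotonicity-in-$c$ route does not deliver the proposition.
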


\begin{proof}
  Let $r \in \cCck$.  Note that if $M := \max r \to \infty$, then by concavity of $r$, $r > M / 2$ on some interval of length at least $c/4$ for $M$ large enough.  Then the first term in \eqref{eq:defn:objective-function} is of order $M^2$ whereas the second term is of order $M$, so the objective function value goes to $\infty.$ We can thus almost surely restrict attention to only functions bounded above by some fixed value $M$.

  Now consider the class
  \begin{equation*}
    {\cal C}_{c,k,M}^\circ
    := \lb r : [-c,c]\to \RR,
    r \text{ concave},
    r(0)=0, M \ge r, r(\pm c) \ge k \rb.
  \end{equation*}
  This class is closed under pointwise convergence because limits of concave functions are concave and the limit of a uniformly bounded function is uniformly bounded.
  It is thus a closed subset of a set of functions compact (by Tychonoff's theorem) under pointwise convergence, so is compact, and by the Lebesgue bounded convergence theorem, $\phi_c$ is continuous with respect to pointwise convergence.
  Thus $\phi_c$ attains a minimum on ${\cal C}_{c,k,M}^\circ$.  We now show that the minimum satisfies the constraints $r(\pm c) = k$.  Assume, to the contrary, that $r(c) > k$.
  Let $(y)_- := \min(y,0)$ for $y \in \RR$.
  Let $\Delta
  = \eta ( c- \delta - \cdot)_-$
  for some $\delta,\eta > 0$, where $\delta \eta = g := r(c)-k$, so that $r(c)+\Delta(c)=k$.  We will show that $\phi_c(r + \Delta) < \phi_c(r)$.  Let $\tilde X(u) := X(u)-X(c)$.  Then
  \begin{align*}
    - \int \Delta  d X
    =  - \int \Delta d\tilde X
    =- (\tilde{X} \Delta)(c-\delta, c] + \int \tilde{X} d\Delta
    = \int \tilde{X} d\Delta
  \end{align*}
  since $\Delta(c-\delta)=0$ and $\tilde{X}(c)=0$.
  Here, we let $g(a,b] := g(b)-g(a)$ for a function $g$ and $a < b$.
  The previous display equals
  \begin{align}
    - \eta \int_{c-\delta}^c (W(u)-W(c))du
    + \eta \int_{c-\delta}^c  (4u^3 - 4c^3) du. \label{eq:existencepf-intdX}
  \end{align}
  There exists a sequence of $\delta$'s converging to $0$ such that the first term in \eqref{eq:existencepf-intdX} is $0$ because integrated Brownian motion started from $0$ crosses $0$ an infinite number of times near $0$, almost surely.
  The second term in \eqref{eq:existencepf-intdX} equals, to first order approximation (as $\delta \searrow 0$),
  \begin{align}
    -    \eta  k \int_{c-\delta}^c (u-c)du
    =  \eta  \delta^2 k / 2
    =  g \delta k /2.
    \label{eq:existencepf-2ndterm}
  \end{align}
  On the other hand, the first order term in $\int_{c-\delta}^c ((r+\Delta)^2 - r^2) d\lambda /2$ is
  $\int r \Delta d\lambda$ which equals, to first order,
  \begin{align}
    r(c) \int \Delta d\lambda = -r(c) \int |\Delta| d\lambda
    =  \frac{-r(c)}{2} g \delta
    <  (-k/2) g \delta. \label{eq:existencepf-1stterm}
  \end{align}
  Thus we see that there exists $\delta$
  such that \eqref{eq:existencepf-1stterm}
  plus \eqref{eq:existencepf-2ndterm} is negative, i.e.\
  such that $\phi_c(r + \Delta) - \phi_c(r) < 0$.  Thus the minimum over ${\cal C}_{c,k,M}^\circ$ satisfies $r(\pm c) = k$, and so $\phi_c$ attains a minimum on $\cCck$.

  Uniqueness of the minimum follows from the strict convexity of $\phi_c$ on the convex set $\cCck$: for any $w \in (0,1)$, $r_1, r_2 \in \cCck$,
  \begin{equation*}
    \phi_c( w r_1 + (1-w) r_2)
    = w \phi_c( r_1) + (1-w) \phi_c(r_2)
    - \frac{w (1-w)}{2} \int_{-c}^c (r_1(u)-r_2(u))^2 du,
  \end{equation*}
  where the right side is strictly less than $w \phi_c( r_1) + (1-w) \phi_c(r_2) $ if $\int_{-c}^c (r_1(u)-r_2(u))^2 du > 0$, so that $\phi_c$ is strictly convex.  This completes the proof. \qed
\end{proof}

\bigskip

We now state and prove a characterization of the minimizer of \eqref{eq:defn:objective-function}.  Unlike the unconstrained case, we must explicitly deal with the  knot set $\Sxy$ (defined below) in the statement and the proof of the theorem, because the correct definitions of the processes depends on knots $\TauL$ and $\TauR$ (also defined below).  This complicates definitions, because $\Sxy$ is not necessarily a countable set.  It is known to have Lebesgue measure zero
\citep{Sinaui:1992wl,Groeneboom:2001fp}.
For our next theorems, we let

\begin{theorem}
  \label{thm:characterization-compact}
  Let  $X$ be given by \eqref{eq:20}.
  Fix $c > 0$  and $k < 0$ and let $\rxy \in \cCck$.  Define $\Sxy$ by
  $$(\Sxy)^c:= (\Sxy(\rxy))^c :=
  \lb t \in \RR : (\rxy)''(t)=0 \rb.$$
  For $t \in [-c,c]$, define
  \begin{equation*}
    \TauR := \inf \lp \Sxy \cap [0 ,c) \rp,
    \qquad
    \TauL := \sup \lp \Sxy \cap (-c,0 ] \rp,
  \end{equation*}
  \begin{align}
    \XR(t) &:= \int_{\TauR}^t dX,
    && \XL(t) := \int^{\TauL}_t dX, \label{eq:defn:XR-XL} \\
    \YR(t) &:= \int_{\TauR}^t \XR(u)du,
    && \YL(t) := \int_t^{\TauL} \XL(u)du. \label{eq:defn:YR-YL}
  \end{align}
  Let $\HR$ be the primitive of the primitive of $\rxy$ such that $\HR(c)=\YR(c)$ and $\HR(\TauR)=\YR(\TauR)$.  Let $\HL$ be the primitive of the primitive of $\rxy$ such that $\HL(-c)=\YL(-c)$ and $\HL(\TauL)=\YL(\TauL)$.  Then for Lebesgue-almost-every $c>0$, $\rxy = \argmin_{ r \in \cCck} \phi_c(r)$ if and only if the following three conditions hold:
  \begin{enumerate}
  \item \label{item:middle-equality}  $(\HR-\YR)(0) = (\HL-\YL)(0)$,
  \item \label{item:inequalities} for $c \ge t \ge 0,$ $(\HR-\YR)(t) \le 0$ and for $-c \le t \le 0$, $(\HL-\YL)(t) \le 0$,
  \item \label{item:equalities} and
    \begin{equation}
      \label{eq:characterization-compact-equality-integral-condition}
      \int_{[-c,0]} (\HL-\YL) \, d(\rxy)'
      = 0
      = \int_{[0,c]} (\HR-\YR) \, d(\rxy)'.
    \end{equation}
  \end{enumerate}
\end{theorem}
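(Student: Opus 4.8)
The plan is to treat this as a convex optimization problem and to characterize the minimizer through a first-order variational inequality. Since $\phi_c$ is strictly convex on the convex set $\cCck$ (as established in the proof of Proposition~\ref{prop:cpct-existence-uniqueness}), a function $\rxy \in \cCck$ is the unique minimizer if and only if its one-sided directional derivative is nonnegative in every feasible direction, i.e.
\[
D\phi_c(\rxy)[r - \rxy] := \int_{-c}^c \rxy(u)(r-\rxy)(u)\, du - \int_{-c}^c (r - \rxy)(u)\, dX(u) \ge 0
\quad \text{for all } r \in \cCck .
\]
First I would record this equivalence, so that the rest of the proof reduces to showing that conditions \eqref{item:middle-equality}--\eqref{item:equalities} are equivalent to this family of inequalities. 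Writing $\Delta := r - \rxy$, feasibility forces $\Delta(0) = 0$ and $\Delta(\pm c) = 0$, while $d\Delta' = dr' - d(\rxy)'$ is a difference of nonpositive second-derivative measures.

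The second step is to rewrite $D\phi_c(\rxy)[\Delta]$ by integrating by parts twice on each of $[0,c]$ and $[-c,0]$ separately. On $[0,c]$, using $\HR'' = \rxy$ and $\YR'' = dX$ (symbolically), and the vanishing of the boundary terms at $0$ and $c$ coming from $\Delta(0) = \Delta(c) = 0$, the first integration by parts yields $-\int_0^c(\HR - \YR)'\Delta'\, du$; a second integration by parts, using $(\HR - \YR)(c) = 0$ (from $\HR(c) = \YR(c)$), produces the boundary term $\Delta'(0^+)(\HR - \YR)(0)$ plus $\int_0^c (\HR - \YR)\, d\Delta'$. The analogous computation on $[-c,0]$, with the reversed orientation built into $\XL,\YL$ and $(\HL - \YL)(-c)=0$, gives $-\Delta'(0^-)(\HL - \YL)(0) + \int_{-c}^0 (\HL - \YL)\, d\Delta'$. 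Hence
\[
D\phi_c(\rxy)[\Delta]
= \Delta'(0^+)(\HR - \YR)(0) - \Delta'(0^-)(\HL - \YL)(0)
+ \int_0^c (\HR - \YR)\, d\Delta' + \int_{-c}^0 (\HL - \YL)\, d\Delta' .
\]
These integrations by parts are legitimate because $\Delta'$ is of bounded variation (a difference of monotone derivatives of concave functions) while $X$, and hence $\HR - \YR$ and $\HL - \YL$, are continuous, so the Stieltjes integration by parts applies pathwise.

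For sufficiency, assume \eqref{item:middle-equality}--\eqref{item:equalities} and show the three pieces above are each nonnegative. Writing $d\Delta' = dr' - d(\rxy)'$, condition \eqref{item:equalities} kills the $d(\rxy)'$ part, leaving $\int_0^c (\HR - \YR)\, dr' \ge 0$ since $(\HR - \YR) \le 0$ by \eqref{item:inequalities} and $dr' \le 0$ by concavity of $r$; likewise on the left. For the boundary term, condition \eqref{item:middle-equality} gives $(\HR - \YR)(0) = (\HL - \YL)(0) =: g_0$, and since $\rxy$ is affine near $0$ in the generic case $\TauL < 0 < \TauR$, the quantity $\Delta'(0^+) - \Delta'(0^-)$ is the nonpositive jump of $r'$ at $0$; combined with $g_0 = (\HR - \YR)(0) \le 0$ (condition \eqref{item:inequalities} at $t=0$), the boundary term $g_0(\Delta'(0^+)-\Delta'(0^-))$ is nonnegative. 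Thus $D\phi_c(\rxy)[\Delta] \ge 0$ for every feasible $\Delta$, and strict convexity then forces $\rxy$ to be the unique minimizer. The degenerate case where $0$ is itself a knot ($\TauL = 0$ or $\TauR = 0$) is handled by noting that \eqref{item:inequalities} and \eqref{item:equalities} force $g_0 = 0$ there, so that the boundary term vanishes.

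For necessity, I would extract each condition by testing the variational inequality against carefully chosen feasible perturbations, mirroring the roles of the finite-sample generators $a_i$ and $a_\pm$ in Theorem~\ref{prop:rnxy-characterization}. Localized downward-tent perturbations at a point $t \in (0,c)$ (respectively $(-c,0)$), which preserve concavity and can be arranged to vanish at $0$ and $\pm c$, give $(\HR - \YR)(t) \le 0$ (respectively the left inequality), yielding \eqref{item:inequalities}; two-sided perturbations supported at the existing knots of $\rxy$, where curvature can be both added and removed, give together with \eqref{item:inequalities} the complementary-slackness equalities in \eqref{item:equalities}; and a two-sided tilt of the central affine segment, feasible in both directions because the knots at $\TauL,\TauR$ can absorb the change in slope, produces the equality $(\HR - \YR)(0) = (\HL - \YL)(0)$ of \eqref{item:middle-equality}. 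The main obstacle I anticipate is precisely this last family of constructions: verifying that the required perturbations are genuinely feasible (concave, with the correct boundary values at $0$ and $\pm c$, and reversible where a two-sided conclusion is needed) demands care, as does the measure-theoretic bookkeeping on the set $\Sxy$, which is closed of Lebesgue measure zero but possibly uncountable, so that $d(\rxy)'$ must be treated as a general nonpositive measure rather than a sum of atoms. The Lebesgue-almost-every-$c$ proviso inherited from Proposition~\ref{prop:cpct-existence-uniqueness} is used to exclude degenerate behavior of $\rxy$ at the endpoints $\pm c$.
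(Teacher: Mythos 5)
Your overall route is the same as the paper's: characterize the minimizer by the first-order condition of the convex program, prove sufficiency by two integrations by parts on each half-interval, and prove necessity by testing concavity-preserving perturbations. The sufficiency half essentially reproduces the paper's argument, and your decomposition of $D\phi_c(\rxy)[\Delta]$ is correct; but your justification of the integrations by parts fails at $\pm c$. You assert $\Delta'$ has bounded variation on $[-c,c]$, yet a member of $\cCck$ can have $r'(u)\to-\infty$ as $u\nearrow c$ (e.g.\ $r(u)=k\lp 1-\sqrt{1-u/c}\rp$ on $[0,c]$), so the boundary term $(\HR-\YR)(c)\,\Delta'(c^-)$ is a priori of the form $0\cdot\infty$. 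The paper closes this with a separate argument showing one may take $q'(\pm c)$ and $(\rxy)'(\pm c)$ finite, and that argument is exactly where the Lebesgue-a.e.-$c$ proviso is consumed: ruling out a one-sided parabolic tangent of $\YR$ at $c$ gives the strict inequality $\FR(c)>\XR(c)$, which forces $\phi_c(\rxy)$ (or the directional derivative in $q-\rxy$) to be infinite if the relevant derivative diverges. This omission is repairable, but it is not the routine bookkeeping your write-up suggests.

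The genuine gap is in necessity. Every one of your perturbation families needs, at some point, to add an upward (convex) kink at or arbitrarily near a knot of $\rxy$: the downward tent at $t$ must be corrected to restore the boundary value at $c$; the tilt of the central affine segment must be absorbed at $\TauR$; and one direction of the complementary-slackness equality amounts to a perturbation of the form $-\epsilon\Delta_{\TauR}$. Such perturbations preserve concavity only when $\rxy$ has a derivative jump at the knot or the knot is isolated. But $\Sxy$ may be uncountable with no isolated points and $(\rxy)'$ may be continuous there, in which case $\rxy-\epsilon\Delta_\tau$ is non-concave for every $\epsilon>0$ and your test directions are simply infeasible --- this is precisely the obstacle you defer with ``demands care.'' Resolving it is the technical core of the paper's proof: the surrogate perturbation $r_{\epsilon,s}$ of \eqref{eq:defn:r-eps-s}, the choice of $s_i\searrow\tau$ and $\epsilon_i\searrow 0$ with $\delta_i=O(\epsilon_i)$, and the negligibility estimate \eqref{eq:modified-elbow-perturb-negligible-side}, which together yield \eqref{eq:modified-elbow-perturb} and allow each inequality to be obtained in the limit along nearby admissible kink locations. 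Without a substitute for this construction, none of Conditions \ref{item:middle-equality}--\ref{item:equalities} is actually derived, so the necessity direction does not go through as written.
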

For completeness we give, in Lemma~\ref{lem:integration-by-parts} in the appendix, integration by parts formulas, which we will use in the proof without further reference.  Recall also that, by Theorem 23.1 of \cite{Rockafellar:1970wy}, a finite, concave function on $\RR$ has well-defined right and left derivatives on all of $\RR$.
\begin{proof}
  Notice $\TauL$ and $\TauR$ are well defined and finite because $\rxy(0)=0$ and $\rxy(\pm c) = k < 0$, so $\rxy$ cannot be affine.

  {\bf Sufficiency:} %
  \label{page:suffiency-proof-compact-characterization} Assume Conditions \ref{item:middle-equality}, \ref{item:inequalities}, and \ref{item:equalities} hold for $\rxy$.  Since $\HR$ is twice differentiable,  if $\HR(c)=\YR(c)$, $\HR'(c)=\YR'(c)$, and $\HR \le \YR$ then there exists a `one-sided parabolic tangent' \cite{Groeneboom:2001fp} to $\YR$ at $c$.  Because $W$ is of infinite variation, for Lebesgue almost all $c > 0$, $\YR$ cannot have such a one-sided parabolic tangent, so we can thus assume that $\HR'(c)=\FR(c) > \XR(c)=\YR'(c)$.  Note we can rule out $\FR(c) < \XR(c)$ because then on an interval $[c-\delta, c]$, $\delta>0$, we would have $\YR < \HR$.
  Similarly, we can assume
  $-\HL'(-c)= \FL(-c) > \XL(-c) = -\YL'(-c)$.

  Note that for functions $q$ and $r$,
  \begin{equation*}
    q^2 - r^2 = (q-r)^2 + 2r(q-r) \ge 2r(q-r).
  \end{equation*}
  Thus for any $q \in \cCck$ which is not Lebesgue-a.e.\ identical to $\rxy$,
  \begin{align*}
    \phi_c(q)-\phi_c(\rxy)
    &  > \int_{-c}^c ( q - \rxy) \lp \rxy d\lambda - dX \rp \\
    & = -\int_{-c}^0 (q- \rxy) \, d( \FL-\XL)
    + \int_0^c (q-\rxy) \, d(\FR-\XR),
  \end{align*}
  from \eqref{eq:objective-fn-derivative-2-sided}.
  The previous display equals
  \begin{align}
    \MoveEqLeft - \ls ((q-\rxy)(\FL-\XL))(-c,0]
    - \int_{-c}^0 (\FL-\XL) \, d(q-\rxy) \rs \nonumber \\
    & \quad + ((q-\rxy)(\FR-\XR))(0,c]
    - \int_0^c (\FR-\XR) \, d(q-\rxy)  \nonumber \\
    & =
    \int_{-c}^0 (\FL-\XL) \, d(q-\rxy)
    - \int_0^c (\FR-\XR) \, d(q-\rxy) \label{eq:objective-fn-derivative-2-sided-2}
  \end{align}
  since $q,\rxy \in \cCck$, and, recalling $-(\FL-\XL) = (\HL-\YL)'$, $(\FR-\XR)= (\HL-\YL)'$,  we see the previous display equals
  \begin{align*}
    \MoveEqLeft
    -((\HL-\YL)(q-\rxy)')(-c,0]
    + \int_{(-c,0]} (\HL-\YL)\, d(q-\rxy)' \\
    & \quad - ((\HR-\YR)(q-\rxy)')(0,c]
    + \int_{(0,c]} (\HR-\YR) \, d(q-\rxy)',
  \end{align*}
  and if both $q'(\pm c)$ and $(\rxy)'(\pm c)$ are finite, then by Condition~\ref{item:middle-equality} and since $(\HL-\YL)(-c)=0$, $(\HR-\YR)(c)=0$, the previous display equals
  \begin{equation*}
    \int_{(-c,0]}(\HL-\YL) dq'
    + \int_{(0,c]} (\HR-\YR) dq' \ge 0.
  \end{equation*}
  The final inequality follows by Condition~\ref{item:inequalities} and because $q'$ is nonincreasing ($q$ is concave), so that $q'$ defines a nonpositive measure.

  We now show that we can take both $q'(\pm c)$ and $(\rxy)'(\pm c)$ to be finite, which will complete the proof of sufficiency.  \label{page:end-suffiency-proof-compact-characterization} Recall from the beginning of this sufficiency proof that we may assume that $\YR$ does not have a one-sided parabolic tangent at $c$, and thus that $\FR(c) > \XR(c)$.  Now,
  \begin{equation}
    \label{eq:half-objective-fn}
    k(\XR(c)-\FR(c)) + \int_0^c (\rxy)^2(u)du - \int_0^c \rxy(u) dX(u)
  \end{equation}
  equals $\int_{0}^c (\rxy)'(u) (\XR(u)-\FR(u)) du$. But if $(\rxy)'(u) \to -\infty$ as $u \nearrow c$, then $\int_{0}^{\eta} (\rxy)'(u) (\XR(u)-\FR(u)) du \to \infty$ as $\eta \nearrow c$.  A similar argument holds on $[-c,0]$ to show that if $(\rxy)'(u) \to \infty$ as $u \searrow -c$, then $\int_{\eta}^0 (\rxy)'(u) ( \XL-\FL)(u)\, du \to \infty$ as $\eta \searrow -c$.
  Comparison with e.g.\ the triangle function linearly interpolating between $\rxy(\pm c)=k$ and $\rxy(0)=0$, shows that $\phi_c( \rxy ) < \infty$ and so
  \eqref{eq:half-objective-fn} is also $< \infty$.
  Thus, by contradiction, we see that $(\rxy)'$ (interpreted appropriately as the left or right derivative) is bounded above at $-c$ and below at $c$, and so by concavity is bounded on all of $[-c,c]$.
  Similarly, to see that $q'$ can be assumed finite, notice that if $q'(u) \to -\infty$ as $u \nearrow c$ or $q'(u) \to \infty$ as $u \searrow -c$, then \eqref{eq:objective-fn-derivative-2-sided-2} would be infinite, so we would be finished.  This completes the proof of sufficiency.

  \medskip

  \noindent

  {\bf Necessity:} Assume $\rxy = \argmin_{r \in \cCck} \phi_c(r)$.  We argue by perturbations of $\rxy$ to show the characterization holds.  For a perturbation $\Delta : [-c,c] \to \RR$, we will say that the perturbation is `acceptable for small $\epsilon$' if for all $\epsilon > 0$ small enough, $\rxy + \epsilon \Delta \in \cCck$.  If for all $\epsilon > 0 $ small enough, $\rxy + \epsilon \Delta$ is concave (but may not satisfy the constraints at $ \pm c$), we say `$\Delta$ preserves concavity for small $\epsilon$.'  If $\rxy + \Delta \in \cCck$ or $\rxy + \Delta$ is concave we will say that $\Delta$ is `acceptable' or `preserves concavity,' respectively (in which case the $\epsilon$ is generally explicitly given).  We let $\FR := \HR'$ and $\FL := -\HL'$ (in analogy with $\XR = \YR', \XL = -\YL'$). Note: this means $\HL(t) = \int^{\TauL}_t \FL d \lambda$ and $\HR(t) = \int_{\TauR}^t \FR d\lambda$.  Recall that $\lambda$ is Lebesgue measure.  For a perturbation $\Delta$ that is acceptable for small $\epsilon$,
  \begin{align}
    0 \le
    \lim_{\epsilon \searrow 0} \epsilon^{-1} \lp    \phi_c \lp \rxy + \epsilon \Delta \rp - \phi_c( \rxy) \rp
    & = \int_{-c}^c \Delta (\rxy d\lambda - dX)
    \label{eq:objective-fn-derivative} \\
    & = - \int_{-c}^0 \Delta \, d(F_L - X_L)
    + \int_{0}^c \Delta \, d(F_R-X_R),
    \label{eq:objective-fn-derivative-2-sided}
  \end{align}
  since $\rxy$ minimizes $\phi_c$.  We now show a preliminary result.  For $t \ge 0$ let $\Delta_t(u) := (t - u)_-$ and for $t < 0$ let $\Delta_t(u) := (u-t)_-$, where $(y)_- = \min(y,0)$.  Now fix $ t \ge 0$, $\epsilon > 0$, and %
  $\tau \in \Sxy$. Assume $\tau \ge 0$; the case $\tau \le 0$ is analogous.  Assume further that either $(\rxy)'(\tau+) \ne (\rxy)'(\tau-)$ or $(\rxy)_+''(\tau) \ne 0$, where $(\rxy)_+''(\tau) := \lim_{h \searrow 0} h^{-1} \lp (\rxy)'(\tau+h) - (\rxy)'(\tau+) \rp$ is the second derivative from above.  In the statement ``$(\rxy)_+''(\tau) \ne 0$,'' we allow the possibility $(\rxy)_+''(\tau)$ is undefined.  Notice that there exists a sequence of points $\lb \tau_i \rb \subset \Sxy$, $\tau_i \searrow \TauR$, such that $\tau_i$ satisfies the conditions just described for $\tau$, since $\rxy$ is linear on $[\TauL,\TauR]$ (so has second derivative that is $0$ from below).  Thus either $\TauR \in \Sxy$ or there are $\tau_i \in \Sxy$, $\tau_i \searrow \TauR$, all either having discontinuous derivative or having nonzero second derivative from above.
  Now for $s \ge 0$, define the concave function $r_{\epsilon,s}$ by
  \begin{equation}
    \label{eq:defn:r-eps-s}
    \begin{split}
      r_{\epsilon,s}(u)
      & := \min \lp \rxy(s) + ( (\rxy)'(s+) + \epsilon) (u-s) , \; \rxy(u) - \epsilon \Delta_s(u) \rp\\
      & = \rxy(u) \one_{(-\infty,s-\delta]}(u)
      +  \lp\rxy(s) + ( (\rxy)'(s+) + \epsilon) (u-s) \rp \one_{(s-\delta,s)}(u) \\
      & \qquad +  \lp \rxy(u) - \epsilon \Delta_s(u) \rp \one_{[s,\infty)}(u)
    \end{split}
  \end{equation}
  where the equality holds for some small $\delta \equiv \delta_{\rxy,s,\epsilon} \ge 0$  which solves
  \begin{equation}
    \label{eq:delta-equation}
    \rxy(s-\delta) = \rxy(s) - ((\rxy)'(s+) + \epsilon )\delta.
  \end{equation}
  By our assumptions on $\tau$, we can check that there exist sequences $s_i \searrow \tau$ and $\epsilon_i \searrow 0$ such that
  $\delta_i := s_i - \tau \ge 0$ satisfy $\delta_i = O(\epsilon_i)$ as $i \to \infty$.
  If $(\rxy)'(\tau+) \ne (\rxy)'(\tau-)$ then $-\Delta_\tau$ preserves concavity for small $\epsilon$ and we may take $s_i = \tau$ and $\delta_i = 0$ for any $\epsilon_i > 0 $ small enough, so clearly $\delta_i = O(\epsilon_i)$.
  Similarly, if there exists a sequence $s_i \searrow \tau$ such that $(\rxy)'(s_i+) \ne (\rxy)'(s_i-)$ then we may take $\delta_i = 0$.  If $(\rxy)'$ exists but $(\rxy)_+''(\tau) \ne 0$ then there exists a sequence of points $s_i \searrow \tau$, at which we assume without loss of generality at that $\rxy$ is differentiable, such that
  $\rxy(\tau) - \rxy(s_i) - (\rxy)'(s_i) (\tau-s_i) \le \eta (\tau - s_i)^2$
  for some $\eta < 0$ (by concavity).  (Note: we can also assume without loss of generality, since $\rxy$ is differentiable at $\tau$ and $(\rxy)'$ is monotonic, that $(\rxy)'(s_i) \to (\rxy)'(\tau)$.)  Thus, for a sequence $\gamma_i \ge 1$, by differentiability at $s_i$,
  \begin{equation}
    \label{eq:epsilon-delta-definition}
    \eta (\tau-s_i)^2 \gamma_i = \rxy(\tau) - \rxy(s_i) - (\tau-s_i) (\rxy)'(s_i) = o(\tau-s_i)
    \quad \mbox{ as } s_i \searrow \tau.
  \end{equation}
  Thus, set $\epsilon_i := (-\eta)(\gamma_i)(s_i-\tau)$. By \eqref{eq:epsilon-delta-definition}, $\epsilon_i \searrow 0$, and with $\delta_i = s_i-\tau$, we see $\delta_i = O(\epsilon_i)$ as $i \to \infty$, since $\gamma_i \ge 1$ and $\eta < 0$ is fixed.

  Our first goal is to show
  \begin{equation}
    \label{eq:modified-elbow-perturb}
    \lim_{i \to \infty} \frac{ \phi_c(r_{\epsilon_i}) - \phi_c(\rxy) }{\epsilon_i}
    = \lim_{\epsilon \searrow 0} \frac{ \phi_c(\rxy - \epsilon \Delta_{\tau}) - \phi_c(\rxy) }{\epsilon}.
  \end{equation}
  Note that if $\tau$ is an isolated knot then $r_{\epsilon,\tau} = \rxy - \epsilon \Delta_\tau$ for $\epsilon$ small enough, and also that for $\epsilon$ small enough, $\rxy - \epsilon \Delta_\tau$ is concave (thus for small $\epsilon,$ $- \Delta_\tau$ preserves concavity  although it is not an acceptable perturbation).  If $\tau$ is not an isolated knot then $\rxy - \epsilon \Delta_\tau$ is not concave.  However, $r_{\epsilon_i,s_i}$ is indeed concave and by \eqref{eq:modified-elbow-perturb}, we can %
  use $\tilde{\Delta}_{i}:= \rxy - r_{\epsilon_i,s_i}$ in place of $\epsilon \Delta_\tau$.  Now, notice that
  \begin{equation}
    \label{eq:modified-elbow-perturb-main-side}
    \lim_{i \to \infty} \frac{ \phi_c(r_{\epsilon_i} \one_{[s_i,\infty)}) - \phi_c(\rxy \one_{[s_i,\infty)}) }{\epsilon_i}
    = \lim_{i \to \infty} \frac{ \phi_c(\rxy - \epsilon_i \Delta_{s_i}) - \phi_c(\rxy) }{\epsilon_i}
    = - \int \Delta_\tau (\rxy d\lambda - dX),
  \end{equation}
  by the same calculation as in \eqref{eq:objective-fn-derivative}, since $s_i \to \tau$. %
  Here, for a set $A$, $\one_{A}(u)$ is $1$ if $u \in A$ and $0$ otherwise.
  Thus,
  we will show
  \begin{equation}
    \label{eq:modified-elbow-perturb-negligible-side}
    \lim_{i \to \infty} \frac{ \phi_c(r_{\epsilon_i} \one_{(-\infty,s_i)})
      - \phi_c(\rxy \one_{(-\infty,s_i)}) }{\epsilon_i} = 0,
  \end{equation}
  and then conclude that \eqref{eq:modified-elbow-perturb} holds.  We assume without loss of generality that $(\rxy)'(s_i-) = (\rxy)'(s_i+)$ (since if this does not hold for an infinite subsequence of $\lb s_i \rb$, then we can take the subsequence as our sequence, and then $-\Delta_{s_i}$ preserves concavity, $\delta_i=0$, and \eqref{eq:modified-elbow-perturb-negligible-side} is immediate).  Now %
  $ \phi_c(r_{\epsilon_i} \one_{(-\infty,s_i)}) - \phi_c(\rxy \one_{(-\infty,s_i)}) $ equals
  \begin{equation}
    \label{eq:phi-diff}
    \begin{split}
      \MoveEqLeft
      \inv{2} \int_{\tau}^{s_i} \lp \lp \rxy(s_i) + ( (\rxy)'(s_i+) + \epsilon_i) (u-s_i) \rp^2
      - (\rxy)^2(u) \rp \, du  \\
      & - \int_{\tau}^{s_i}  \lp \rxy(s_i) - \rxy(u) + ( (\rxy)'(s_i+) + \epsilon_i) (u-s_i) \rp
      dX(u).
    \end{split}
  \end{equation}
  The first term in \eqref{eq:phi-diff} equals
  \begin{align*}
    \MoveEqLeft \inv{2} \int_{\tau}^{s_i} \bigg[
    ( \rxy(s_i) + (\rxy)'(s_i-) (u-s_i))^2 + 2\epsilon_i (\rxy(s_i) + (\rxy)'(s_i-)(u-s_i)) (u-s_i) +  \epsilon_i^2(u-s_i)^2 %
    \\
    & %
    - \lp   \lp\rxy(s_i) + \rxy(s_i-) (u-s_i)\rp^2 + o(u-s_i) \lp \rxy(s_i) + \rxy(s_i-) (u-s_i)
    \rp + o(u-s_i)^2 \rp
    \bigg] \, du
  \end{align*}
  and since $\delta_i = O(\epsilon_i)$, the previous display is $O(\epsilon_i^2)$.  Recalling $\tilde \Delta_{i} = \rxy - r_{\epsilon_i,s_i}$ is, on $[\tau,s_i]$, the integrand of the second term in \eqref{eq:phi-diff}, we see that the negative of the second term in \eqref{eq:phi-diff} equals
  \begin{equation}
    \label{eq:error-2nd-term-integ-by-parts}
    (X \tilde{\Delta}_i )(\tau, s_i] - \int_{\tau}^{s_i} X(u) ((\rxy)'(s_i-)du - d(\rxy)(u)).
  \end{equation}
  Recall: for a function $g$ and $a<b$, we let $g(a,b] := g(b)-g(a)$.  Since $\tilde{\Delta}_i(s_i)=0$,  $\delta_i = O(\epsilon_i)$ as $\epsilon_i \searrow 0$,  $\tilde{\Delta}_{i}(s_i-\delta_i) = o(\delta_i)= o(\epsilon_i)$ (recall $(\rxy)'(s_i+)=(\rxy)'(s_i-)$), and $X$ is continuous, \eqref{eq:error-2nd-term-integ-by-parts} is $o(\epsilon_i)$.  Thus both
  terms in \eqref{eq:phi-diff} are $o(\epsilon_i)$ and so we have shown
  \eqref{eq:modified-elbow-perturb-negligible-side}, so by \eqref{eq:modified-elbow-perturb-main-side},
  we have shown \eqref{eq:modified-elbow-perturb}.

  From now on we take $\tau = \TauR$.  In the case where $\TauR \notin \Sxy$ but $\TauR = \lim_{i \to \infty} \tau_i$ with $\tau_i \in \Sxy$, the below arguments go through with $\tau = \tau_i$ and taking the limit of $\tau_i$.

  We now show Condition~\ref{item:middle-equality} holds.  Recall $\Delta_t(u) := (t - u)_-$ for $t \ge 0$ and $\Delta_t(u) := (u-t)_-$ for $t < 0$.  Let $\Delta_{-}(u) := -u$.  Now let
  \begin{equation}
    \label{eq:defn:Delta-condition1-center}
    \Delta_{1,i}(u) :=
    -\frac{c \epsilon_i \Delta_{\TauL}}{ \Delta_{\TauL}(-c)}
    +  \epsilon_i \Delta_{-} (u)
    + \tilde{\Delta}_{i}(u) \frac{ c}{ -\Delta_{s_i}(c)}.
  \end{equation}
  Note that $\Delta_{1,i}$ is an acceptable perturbation with $\Delta_{1,i}(\pm c)=0$ and $\Delta_{1,i}(0)=0$.  Furthermore, as in the proof of \eqref{eq:modified-elbow-perturb}, we can show that
  \begin{equation}
    \label{eq:Delta1-derivative}
    0 \le
    \lim_{i \to \infty} \frac{\phi_c( \rxy + \Delta_{1,i})-\phi_c(\rxy)}{\epsilon_i}
    = \int_{-c}^c \lp - \frac{c}{\Delta_{\TauL}(-c)} \Delta_{\TauL} + \Delta_{-} +
    \Delta_{\TauR} \frac{c}{ \Delta_{\TauR}(c)}\rp
    \, \lp \rxy d\lambda - dX \rp.
  \end{equation}
  Let $\Delta_1$ denote the integrand on the right side of \eqref{eq:Delta1-derivative}.
  By \eqref{eq:objective-fn-derivative-2-sided}, the
  right side of \eqref{eq:Delta1-derivative} equals
  $-\int_{-c}^0 \Delta_1 \, d(F_L-X_L)
  + \int_0^c \Delta_1 \, d(F_R -X_R)$ which equals
  \begin{equation*}
    -((F_L-X_L)\Delta_1) (-c,0]
    + \int_{-c}^0 (F_L-X_L) \, d\Delta_1
    + ((F_R-X_R)\Delta_1)(0,c]
    - \int_{0}^c   (F_R-X_R) \, d\Delta_1
  \end{equation*}
  and, because $\Delta_1(\pm c) = 0$, $\Delta_1(0)=0$, the previous display
  equals
  \begin{equation*}
    \begin{split}
      \MoveEqLeft \int_{-c}^0 (F_L-X_L)\, d\Delta_1
      - \int_0^c (F_R-X_R) \, d\Delta_1
      = (H_L-Y_L)(0) - (H_R-Y_R)(0)
    \end{split}
  \end{equation*}
  since by definition $(H_L-Y_L)(-c)$, $(H_R-Y_R)(c),$
  $(H_R-Y_R)(\TauR)$,
  and $(H_L-Y_L)(\TauL)$ are all $0$.
  This shows that $ (H_L-Y_L)(0) - (H_R-Y_R)(0) \ge 0$.

  The perturbation $\Delta_{1,i}$ is based about $\Delta_{-}$.  Since $\Delta_{-}$ does not satisfy the side constraints at $\pm c$, we modified $\Delta_{-}$ by adding two further perturbations, (constant multiples of) $\Delta_{\TauL}$ and $\Delta_{i}$, to yield $\Delta_{1,i}$.  The perturbation $\Delta_{i}$ is approximately equal to $-\Delta_{s_i}$, but modified so as to preserve concavity, and $-\Delta_{s_i}$ is approximately equal to $-\Delta_{\TauR}$.  A totally symmetric argument allows us to use a perturbation based around $- \Delta_{-}(u)=u$ that is modified by adding (constant multiples of) $\Delta_{\TauR}$ and a perturbation that approximates $- \Delta_{\TauL}$.  This shows that $ (H_L-Y_L)(0) - (H_R-Y_R)(0) \le 0$, and allows us to conclude that Condition~\ref{item:middle-equality} holds.

  Now let
  \begin{equation}
    \label{eq:defn:Delta2}
    \Delta_{2,i}(u) := \epsilon_i \Delta_t(u) + \tilde \Delta_{i}(u) \frac{ \Delta_t(c)}{\Delta_{s_i}(c)}.
  \end{equation}
  Then $\Delta_{2,i}(c)=0$ and $\Delta_{2,i}( t \wedge \TauR) = 0$ where $a \wedge b = \min(a,b)$.  Thus $\Delta_{2,i}$ is an acceptable perturbation for all $i$, and, as above one can check that
  \begin{align*}
    0 \le
    \lim_{i \to \infty} \epsilon_i^{-1} \lp \phi_c(\rxy + \Delta_{2,i} ) - \phi_c(\rxy) \rp
    = \lim_{\epsilon \searrow 0}
    \epsilon^{-1} \lp \phi_c(\rxy + \epsilon \Delta_2 ) - \phi_c(\rxy) \rp
  \end{align*}
  where $\Delta_2:= \Delta_t - \Delta_{\TauR} \Delta_t(c) / \Delta_{\TauR}(c)$.  Thus, from
  \eqref{eq:objective-fn-derivative},
  \begin{align}
    0 \le \int_{0}^c \Delta_2 \, d(\FR - d\XR)
    = ((\FR-\XR)\Delta_2) (t \wedge \TauR, c]
    - \int_0^c (\FR-\XR) \, d\Delta_2. \label{eq:delta2-deriv-1}
  \end{align}
  The term $ ((\FR-\XR)\Delta_2) (t \wedge \TauR, c]$ equals $0$ because $\Delta_2$ is $0$ at both $t \wedge \TauR$ and at $c$.  Since $\int (\FR-\XR) d\Delta_{\TauR} = (\HR-\YR)(\TauR,c]=0$, we see \eqref{eq:delta2-deriv-1} equals
  \begin{align*}
    (\HR-\YR)(t,c] = -(\HR-\YR)(t).
  \end{align*}
  This shows Condition~\ref{item:inequalities} holds for $t \ge 0$.  The argument for $t \le 0 $ is analogous.

  Now let
  \begin{align*}
    \Delta_{3+,i} :=
    \epsilon_i \rxy_{\TauR} + \tilde \Delta_{i} \frac{ \rxy_{\TauR}(c)}{\Delta_{s_i}(c)}
  \end{align*}
  where $\rxy_{\TauR} := \one_{[\TauR,\infty)} ( \rxy - \rxy(\TauR))$ is continuous, concave, and satisfies $\rxy_{\TauR}(\TauR)=0$.
  As above, we can check that
  \begin{align}
    0 \le \lim_{i \to \infty} \epsilon_i^{-1} ( \phi_c( \rxy + \Delta_{3+,i})
    - \phi_c(\rxy))
    =
    \int_{0}^c \Delta_{3+} \, d(\FR-\XR)
    \label{eq:delta3plus-1}
  \end{align}
  where $\Delta_{3+} := \rxy_{\TauR} - \Delta_{\TauR} \rxy_{\TauR}(c)/\Delta_{\TauR}(c)$.
  Then \eqref{eq:delta3plus-1} equals
  \begin{align}
    ((\FR-\XR)\Delta_{3+})(\TauR,c] - \int_{\TauR}^c (\FR-\XR) \, d\Delta_{3+}
    = - \int_{\TauR}^c (\FR-\XR) \, d\Delta_{3+}
    \label{eq:delta3plus-2}
  \end{align}
  since $\Delta_{3+}$ is $0$ at $\TauR$ and $c$. Then, since $(\HR-\YR)$ is $0$ at $\TauR$ and at $c$, \eqref{eq:delta3plus-2} equals
  \begin{align}
    - \int_{\TauR}^c (\FR-\XR) \, d \rxy
    & =   -((\HR-\YR)(\rxy)')(\TauR,c]
    + \int_{(\TauR,c]} (\HR-\YR) d(\rxy)' \nonumber \\
    & =  \int_{(\TauR,c]} (\HR-\YR) d(\rxy)'
    \label{eq:delta3plus-3}
  \end{align}
  Here we used that $(\rxy)'(c)$ is finite, which follows from the same argument used in the proof of sufficiency since we have already shown that Condition~\ref{item:inequalities} holds.
  Thus, we have shown that $ \int_{(\TauR,c]} (\HR-\YR) d(\rxy)' \ge 0$.  To show the reverse inequality, let
  \begin{equation*}
    \Delta_{3-,i} :=
    - \epsilon_i \rxy_{\TauR} + \epsilon_i \Delta_{\TauR} \frac{ \rxy_{\TauR}(c)}{\Delta_{\TauR}(c)}.
  \end{equation*}
  Notice that $\rxy + \Delta_{3-,i}$ is concave by checking its right and left derivatives at $\TauR$:
  \begin{equation}
    \label{eq:right-deriv-final-perturb}
    (\rxy)'(\TauR+) - \epsilon_i (\rxy)'(\TauR+) - \epsilon_i \frac{\rxy_{\TauR}(c)}{\Delta_{\TauR}(c)}
    \le (\rxy)'(\TauR+) \le (\rxy)'(\TauR-)
    = (\rxy + \Delta_{3-,i})'(\TauR-),
  \end{equation}
  since $(\rxy)'(\TauR+) \ge -\rxy_{\TauR}(c) / \Delta_{\TauR}(c)$ by concavity.  By \eqref{eq:right-deriv-final-perturb}, we see that $(\rxy + \Delta_{3-,i})'$ is monotonic in a neighborhood of $\TauR$ and thus is monotonic everywhere.  Thus $\Delta_{3-,i}$ is an acceptable perturbation for all $i$.  Then $\Delta_{3-,i}$ is approximately equal to $- \Delta_{3+,i}$, and replicating the arguments in displays \eqref{eq:delta3plus-1}-\eqref{eq:delta3plus-3} shows that $ \int_{(\TauR,c]} (\HR-\YR) d(\rxy)' \le 0$.
  Thus, we can conclude $ \int_{(\TauR,c]} (\HR-\YR) d(\rxy)' = 0$, and an analogous argument shows $\int_{[-c,\TauL)} (\HL-\YL) \, d(\rxy)' = 0$.  We can extend the domain of integration to include $[0,\TauR]$ or $[\TauL,0]$, respectively, since $(\HR-\YR)(\TauR)=0$ and $(\HL-\YL)(\TauL)=0$ and $(\rxy)'' \equiv 0$ on $(\TauL,\TauR)$ by definition.  This shows Condition~\ref{item:equalities} holds and completes the proof of the necessity of Conditions \ref{item:middle-equality}, \ref{item:inequalities}, and \ref{item:equalities}, and thus completes the proof. \qed
\end{proof}

\medskip

\noindent
For $c^- < 0 < c^+$ let
\begin{equation}
  \label{eq:4}
  \phi_c(r) := \inv{2} \int_{c^-}^{c^+} r(u)^2du - \int_{c^-}^{c^+} r(u)dX(u)
\end{equation}
(slightly modifying the definition given in \eqref{eq:defn:objective-function}).
\begin{corollary}
  \label{cor:uniqueness-cpct-knot-endpoints}
  Let $c^- < 0 < c^+$, $k^\pm < 0$, and $M > 0$ be random variables, and let $\phi_c$ be given by \eqref{eq:4}.  Let $\cC^\circ_{c,k}:= \lb r \in \cCxy : M \ge r, r(c^\pm) \ge k^\pm \rb$.  Let $\rxy \in \cC^\circ_{c,k}$ and let $Y_R,Y_L, H_R,\HL$ be as in Theorem~\ref{thm:characterization-compact}, with $\pm c$ replaced by $c^\pm$.  Assume further that $|(\rxy)'(c^\pm)| < \infty$, where $(\rxy)'$ refers to the right or left derivative, and assume that
  \begin{equation}
    \label{eq:3}
    (H_R - Y_R)'(c^+ ) = 0 = (H_L - Y_L)'(c^-).
  \end{equation}
  Then for $M$ large enough, almost surely $\rxy$ is a minimizer of $\phi_c$  and is thus unique in $\cC^\circ_{c,k}$ on $[c^-,c^+]$.
\end{corollary}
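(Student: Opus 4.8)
The plan is to prove the corollary by adapting the sufficiency half of the proof of Theorem~\ref{thm:characterization-compact}; the asymmetric interval $[c^-,c^+]$ in place of $[-c,c]$ is purely cosmetic, and the one genuinely new ingredient is the treatment of the endpoints $c^\pm$, which here are tangency points (where $\FR=\XR$, $\FL=\XL$) rather than the generic points the theorem's proof relied on. Throughout I take $\rxy$ to satisfy Conditions~\ref{item:middle-equality}--\ref{item:equalities} of Theorem~\ref{thm:characterization-compact} on $[c^-,c^+]$ (the setting in which the corollary is applied, $\rxy$ being the restriction of the candidate minimizer). First I would fix $M$ large enough that $\rxy$ itself lies in $\cC^\circ_{c,k}$: since $\rxy$ is concave on the compact interval $[c^-,c^+]$ with finite one-sided derivatives at the endpoints, it is bounded above, so such $M$ exists, and thereafter the upper bound plays no role in the comparison (every competitor $q\in\cC^\circ_{c,k}$ already satisfies $q\le M$). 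It then suffices to show $\phi_c(q)>\phi_c(\rxy)$ for every $q\in\cC^\circ_{c,k}$ not Lebesgue-a.e.\ equal to $\rxy$, which simultaneously gives minimality and uniqueness.

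Next, exactly as in Theorem~\ref{thm:characterization-compact}, the pointwise bound $q^2-\rxy^2>2\rxy(q-\rxy)$ yields
\[
  \phi_c(q)-\phi_c(\rxy) > \int_{c^-}^{c^+}(q-\rxy)\,(\rxy\,d\lambda - dX),
\]
and I would split this integral at $0$ and integrate by parts twice, using $\FR-\XR=(\HR-\YR)'$ and $-(\FL-\XL)=(\HL-\YL)'$. The crucial departure from the theorem is in the boundary terms of the first integration by parts at $c^\pm$: in the theorem these vanished because the equality constraints forced $(q-\rxy)(c^\pm)=0$, but under the relaxed constraints $q$ and $\rxy$ may differ at the endpoints. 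Instead, the term at $c^+$ is $(q-\rxy)(c^+)\,(\FR-\XR)(c^+)$, and the hypothesis \eqref{eq:3} says precisely that $(\FR-\XR)(c^+)=(\HR-\YR)'(c^+)=0$, so this term vanishes \emph{regardless} of the sign of $(q-\rxy)(c^+)$; the symmetric statement holds at $c^-$, and the boundary terms at $0$ vanish because $q(0)=\rxy(0)=0$.

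From here the interior argument is identical to that of Theorem~\ref{thm:characterization-compact}, invoking its Conditions~\ref{item:middle-equality}, \ref{item:inequalities}, and \ref{item:equalities} for $\rxy$ on $[c^-,c^+]$: the second integration by parts produces boundary terms that vanish using $(\HR-\YR)(c^+)=0=(\HL-\YL)(c^-)$ (by the definitions of $\HR,\HL$) together with the middle-equality Condition~\ref{item:middle-equality} at $0$, leaving $\int_{(c^-,0]}(\HL-\YL)\,d(q-\rxy)' + \int_{(0,c^+]}(\HR-\YR)\,d(q-\rxy)'$. I would then drop the $(\rxy)'$ contribution via the equality Condition~\ref{item:equalities}, and use the sign Condition~\ref{item:inequalities} ($\HR-\YR\le0$, $\HL-\YL\le0$) together with concavity of $q$ (so that $dq'$ is a nonpositive measure) to conclude the remaining integral is $\ge0$. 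The integrations by parts require $q'(c^\pm)$ and $(\rxy)'(c^\pm)$ to be finite: the latter holds by hypothesis, and if the former fails the comparison integral is already $+\infty$, exactly as argued in the theorem, so such $q$ need not be considered.

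The main obstacle --- and the reason this must be a separate corollary rather than a direct invocation of Theorem~\ref{thm:characterization-compact} --- is precisely the endpoint bookkeeping. Because $c^\pm$ are random variables, the theorem's ``Lebesgue-almost-every $c$'' escape is unavailable, and, worse, the tangency equality $\FR(c^+)=\XR(c^+)$ is exactly the measure-zero case that the theorem's sufficiency proof excludes; so one must check by hand that relaxing the endpoint constraints to the one-sided $r(c^\pm)\ge k^\pm$ does not open up a lower-valued competitor. The content of \eqref{eq:3} is that the first-order effect of perturbing the endpoint value of $r$ is zero, which is exactly what forces every endpoint boundary term to vanish for all admissible $q$; verifying that this tangency condition is the correct replacement for the equality constraints, and that finiteness of the endpoint derivatives keeps every boundary term well-defined, is the delicate part. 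Uniqueness then follows either from the strict inequality above or, as in Proposition~\ref{prop:cpct-existence-uniqueness}, from strict convexity of $\phi_c$ on the convex set $\cC^\circ_{c,k}$.
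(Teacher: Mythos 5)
Your proposal is correct and follows essentially the same route as the paper: the paper's proof likewise reruns the sufficiency half of Theorem~\ref{thm:characterization-compact}, noting that the key identity \eqref{eq:objective-fn-derivative-2-sided-2} now follows from the tangency hypothesis \eqref{eq:3} rather than from the endpoint equality constraints $\rxy(c^\pm)=k^\pm$, and that finiteness of $(\rxy)'(c^\pm)$ must be assumed directly because the one-sided-parabolic-tangent argument is unavailable when $c^\pm$ are random. The only cosmetic difference is bookkeeping: the paper first obtains existence and uniqueness of a minimizer over $\cC^\circ_{c,k}$ from the proof of Proposition~\ref{prop:cpct-existence-uniqueness} and then identifies $\rxy$ as that minimizer, whereas you get minimality and uniqueness simultaneously from the strict comparison inequality (and you note the Proposition route as an alternative).
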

\begin{proof}
  The proof of Proposition~\ref{prop:cpct-existence-uniqueness} shows that, for $M$ large enough, there is a minimizer of $\phi_c$ over $\cC^\circ_{c,k}$ and the minimizer is unique on $[c^-,c^+]$.  (The minimizer does not necessarily satisfy $\rxy(c^\pm) = k^\pm$, since $c^\pm$ are random.)
  The proof that $\rxy$ is indeed such a minimizer follows by a slightly modified version of the sufficiency part of the proof of Theorem~\ref{thm:characterization-compact}.  The equality \eqref{eq:objective-fn-derivative-2-sided-2} follows from \eqref{eq:3} (rather than from $\rxy(c^\pm) = k^\pm$).  Note also that we now assume directly that $|(\rxy)'(c^+)| < \infty$ (since $c^\pm$ are random, we do not know that $Y_L, Y_R$ do not have so-called `one-sided parabolic tangents' at $c^\pm$, respectively).  \qed
\end{proof}

\medskip

\noindent
The previous theorem and corollary are used to prove the next theorem, which gives characterizing conditions for a so-called ``value-constrained invelope process'' on all of $\RR$.  (The term ``invelope process'' originates in \cite{Groeneboom:2001fp}.)  The process on $\RR$ governs the limit distribution of
$\rnxy(x_0)$.

\medskip

\begin{theorem}
  \label{thm:characterization-RR-sec1}
  Let $\rxy \in \cCxy$.  Define
  \begin{equation*}
    (\Sxy)^c :=
    ( \Sxy(\rxy))^c
    := \lb t \in \RR : (\rxyc)''(t)=0 \rb.
  \end{equation*}
  Then define
  \begin{equation*}
    \TauR := \inf \lp \Sxy \cap [0 ,\infty) \rp,
    \qquad
    \TauL := \sup \lp \Sxy \cap (-\infty,0 ] \rp,
  \end{equation*}
  and define $X$, $X_L, X_R, Y_L,$ and $Y_R$ as in
  \eqref{eq:20},
  \eqref{eq:defn:XR-XL}, and
  \eqref{eq:defn:YR-YL}.
  For $t \in \RR$, let
  $H_R(t):= \int_{\tau_R}^t \int_{\tau_R}^u \rxy(v)dv du$, and $H_L(t) := \int_t^{\tau_L}  \int_u^{\tau_L} \rxy(v)dv du$.
  Assume
  \begin{enumerate}
  \item \label{item:middle-equality-RR}  $(\HR-\YR)(0) = (\HL-\YL)(0)$,
  \item \label{item:inequalities-RR} for $t \ge 0,$ $(\HR-\YR)(t) \le 0$ and for $t \le 0$, $(\HL-\YL)(t) \le 0$,
  \item \label{item:equalities-RR} and
    \begin{equation}
      \label{eq:characterization-compact-equality-integral-condition-RR}
      \int_{(-\infty,0]} (\HL-\YL) \, d(\rxy)'
      = 0
      = \int_{[0,\infty)} (\HR-\YR) \, d(\rxy)'.
    \end{equation}
  \end{enumerate}
  Then $\rxy$ is unique.
\end{theorem}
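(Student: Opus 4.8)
The statement is a uniqueness claim, so the plan is to show that any two functions $\rxy_1,\rxy_2 \in \cCxy$ satisfying Conditions~\ref{item:middle-equality-RR}--\ref{item:equalities-RR} must coincide. I would do this by reducing the problem on $\RR$ to the uniqueness already available on compact intervals (Proposition~\ref{prop:cpct-existence-uniqueness} and Corollary~\ref{cor:uniqueness-cpct-knot-endpoints}), applied on a two-sided interval $[c^-,c^+]$ with $c^- < 0 < c^+$, and then letting $c^\pm \to \pm\infty$.

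The first step is to read off the touching structure forced by Condition~\ref{item:equalities-RR}. Since each $\rxy_i$ is concave, $(\rxy_i)'$ is nonincreasing, so $d(\rxy_i)'$ is a nonpositive measure; combined with Condition~\ref{item:inequalities-RR}, which makes $\HR-\YR$ and $\HL-\YL$ nonpositive, the integrands in \eqref{eq:characterization-compact-equality-integral-condition-RR} are nonnegative. Hence the equalities force $\HR-\YR=0$ (respectively $\HL-\YL=0$) $d(\rxy_i)'$-almost everywhere, i.e.\ at every knot of $\rxy_i$. Because $\HR-\YR$ is $C^1$ and attains its maximal value $0$ at each such knot, $(\HR-\YR)'$ vanishes there, while the one-sided derivatives $(\rxy_i)'$ are automatically finite for a finite concave function on $\RR$. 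Thus each $\rxy_i$ has points $c^+\to+\infty$ and $c^-\to-\infty$ at which the slope-matching hypothesis \eqref{eq:3} of Corollary~\ref{cor:uniqueness-cpct-knot-endpoints} holds, and Conditions~\ref{item:middle-equality-RR} and \ref{item:inequalities-RR} supply the remaining hypotheses of that corollary (the middle-equality enters the sufficiency argument exactly as in Theorem~\ref{thm:characterization-compact}).

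Fixing such endpoints $c^\pm$ for $\rxy_1$, and choosing $k^\pm$ below $\min(\rxy_1(c^\pm),\rxy_2(c^\pm))$ and $M$ above the maxima of both functions on $[c^-,c^+]$, both restrictions then lie in the \emph{same} class $\cC^\circ_{c,k}$, and Corollary~\ref{cor:uniqueness-cpct-knot-endpoints} identifies $\rxy_1|_{[c^-,c^+]}$ with the unique minimizer of $\phi_c$ over $\cC^\circ_{c,k}$. The remaining task is to pin $\rxy_2$ to the same compact problem and then exhaust $\RR$ by enlarging the interval.

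The main obstacle is precisely that the slope-matching points in \eqref{eq:3} are function-dependent: $\rxy_1$ and $\rxy_2$ need not share knots, so a priori $\rxy_2$ is the unique minimizer of a \emph{different} compact problem, governed moreover by a base-point shift of $\YR$ (which is determined only up to an affine function through the choice of $\TauR$). To overcome this I would prove a localization lemma: when $c^\pm$ are pushed far out and the constraints $r\le M$, $r(c^\pm)\ge k^\pm$ are inactive, the minimizer of $\phi_c$ over $\cC^\circ_{c,k}$ is insensitive on any fixed interior window to the precise endpoints and bounds, so the compact minimizers associated to $\rxy_1$ and to $\rxy_2$ agree on overlapping interiors. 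A covering argument with $c^\pm\to\pm\infty$ then gives $\rxy_1=\rxy_2$ on all of $\RR$. Establishing this interior-insensitivity, using the convexity identity $q^2-r^2\ge 2r(q-r)$ together with the integration-by-parts representation of the objective derivative from the proof of Theorem~\ref{thm:characterization-compact}, is where I expect the real work to lie.
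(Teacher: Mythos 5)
Your first two steps are sound and essentially coincide with the paper's own reduction: Conditions~\ref{item:inequalities-RR} and \ref{item:equalities-RR} force $\HR-\YR$ (resp.\ $\HL-\YL$) to vanish at every knot of a candidate solution, the $C^1$-maximum argument gives the slope-matching hypothesis \eqref{eq:3}, and Corollary~\ref{cor:uniqueness-cpct-knot-endpoints} then identifies each candidate, between its own touch points, with the unique minimizer of the corresponding compact problem; you also correctly isolate the obstacle, namely that the two candidates need not share touch points. The gap is in how you resolve that obstacle. The ``interior-insensitivity'' lemma you invoke is not established, and the tools you cite for it (the pointwise inequality $q^2-r^2\ge 2r(q-r)$ and integration by parts) are purely deterministic, whereas no deterministic version of that lemma can hold: for an unfavorable driving path the minimizer of $\phi_c$ can be affine over arbitrarily long stretches, and then the boundary data at $c^\pm$ do influence the solution on any fixed interior window. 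The insensitivity could only hold almost surely, its proof would require quantitative control of where touch points of the envelope must occur, and in that form it is a statement at least as strong as the uniqueness you are trying to prove---so as written the plan is close to circular, with all of the real difficulty deferred to an unproven lemma.

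What the paper does instead is a case analysis supported by two probabilistic lemmas. First, Lemma~\ref{lem:process-cubic-representation} gives an explicit cubic representation of $\HR$ in terms of $\YR$ and $\XR$ on each affine piece of $\rxy$; from it the paper deduces a stochastic knot-spacing bound: for every $t$, the first touch point of $\YR$ and $\HR$ beyond $t$ lies within distance $M$ of $t$ with probability at least $1-\epsilon$, uniformly in $t$. Then, if the two candidates admit sequences of \emph{shared} touch points tending to $\pm\infty$, the compact-interval argument (which is your step two, and is the paper's third lemma) gives equality on the intervals between shared touch points, hence everywhere; if on some side no such shared sequence exists, a contradiction is derived from the knot-spacing bound and the cubic representation, following the proof of Theorem~5.2 of \cite{Doss:2016ux}. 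The missing ingredient in your proposal is precisely this probabilistic input: without some quantitative, path-wise control on the location of touch points, one cannot pass from ``each candidate is the unique solution of its own compact problem'' to ``the two candidates agree.''
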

Note that $H_L,H_R$ have different definitions in
Theorem~\ref{thm:characterization-compact}
and in Theorem~\ref{thm:characterization-RR-sec1}.
\begin{proof} %

  We need several lemmas for the proof.  The following lemma connects the $H$-processes to the Gaussian processes about which we can make explicit statements and computations.

  \begin{lemma}
    \label{lem:process-cubic-representation}
    Let $\tau_1,  \tau_2 \in \Sxy$ with $0 < \tau_1 < \tau_2$ be such that $\rxy$ is affine on $[\tau_1, \tau_2]$ and let $t \in [\tau_1, \tau_2]$.  Define, for any function $g$, $\nabla g = g(\tau_2)-g(\tau_1)$, $\bar{g} = (g(\tau_1)+g(\tau_2))/2$,
    $\nabla \tau = \tau_2-\tau_1$, and $\bar \tau = (\tau_1+\tau_2)/2$.
    Then
    \begin{equation*}
      \begin{split}
        \HR(t)
        &=
        \frac{ Y_R(\tau_2) (t-\tau_1) + Y_R(\tau_1) (\tau_2-t)}{\nabla \tau} \\
        & \quad
        - \inv{2} \lp \frac{ \nabla X_R}{\nabla \tau}
        + \frac{4}{ (\nabla \tau)^3} ( \bar{X}_R \nabla \tau -
        \nabla Y_R \rp (t-\tau_1)(\tau_2-t),
      \end{split}
    \end{equation*}
    and so $ H_R(\bar \tau) = \bar {Y}_R - \inv{8} \nabla X_{R} \nabla \tau.$
    Analogous formulas can be stated for the left-side processes.
  \end{lemma}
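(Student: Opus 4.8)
The plan is to use that $\HR$ is a genuine cubic polynomial on the linear stretch $[\tau_1,\tau_2]$ and to pin down its four degrees of freedom from boundary data supplied by the global characterization. Since $\HR$ is the second primitive of $\rxy$, we have $\HR'' = \rxy$; as $\rxy$ is affine on $[\tau_1,\tau_2]$, the restriction of $\HR$ to $[\tau_1,\tau_2]$ is a cubic, so it is enough to determine it from four conditions at the two endpoints.

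The boundary data comes from reading off the tangency of $\HR - \YR$ at the knots $\tau_1,\tau_2 \in \Sxy$. Because $0 < \tau_1 < \tau_2$, Condition~\ref{item:inequalities-RR} gives $\HR - \YR \le 0$ on a two-sided neighborhood of each $\tau_i$. Moreover $d(\rxy)'$ is a nonpositive measure supported in $\Sxy$, so in Condition~\ref{item:equalities-RR} the nonpositive integrand $\HR-\YR$ integrates to zero against it; together with continuity this forces $(\HR - \YR)(\tau_i) = 0$. Each $\tau_i$ is then an interior maximizer, at level $0$, of the $C^1$ function $\HR - \YR$, whence $(\HR-\YR)'(\tau_i)=0$ as well. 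Recalling $\YR' = \XR$, I obtain the four Hermite conditions $\HR(\tau_i) = \YR(\tau_i)$ and $\HR'(\tau_i) = \XR(\tau_i)$ for $i = 1,2$.

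With this data I would solve the cubic Hermite interpolation problem: write $\HR$ as the linear interpolant of the endpoint values $\YR(\tau_i)$ plus a correction vanishing at $\tau_1,\tau_2$, and use the two derivative conditions to fix the correction. Carrying out the computation after centering at $\bar\tau$ and collecting terms in the lemma's $\nabla$ and bar notation produces the stated expression for $\HR(t)$, whose symmetric (second-order) part carries $\nabla X_R$ and whose antisymmetric (third-order) part carries $\bar X_R\,\nabla\tau - \nabla Y_R$. Evaluating at the midpoint $t=\bar\tau$ annihilates the antisymmetric part while the linear interpolant reduces to $\bar Y_R$, leaving $\HR(\bar\tau) = \bar Y_R - \tfrac18 \nabla X_R\,\nabla\tau$. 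The left-side identities follow verbatim by the symmetric argument with $\TauL$, $\XL$, $\YL$, $\HL$, using Condition~\ref{item:inequalities-RR} on $t \le 0$.

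I expect the real work to be in the second step, the derivative-matching $\HR'(\tau_i) = \XR(\tau_i)$. The set $\Sxy$ is only known to have Lebesgue measure zero and need not be discrete, so $\tau_1,\tau_2$ may be accumulation points of knots rather than isolated ones; establishing that $\HR-\YR$ attains an interior maximum exactly at $\tau_i$, and hence has vanishing derivative there, must be done by approximating $\tau_i$ from within $\Sxy$, in the spirit of the necessity argument for Theorem~\ref{thm:characterization-compact}. One should also note that no second-order argument is available here, since $\YR$ is only $C^1$ (its would-be second derivative carries the Brownian increment of $X$), so the tangency must be argued purely at the level of values and first derivatives. The cubic algebra of the third step is then routine.
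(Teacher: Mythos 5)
Your proof is correct and is essentially the argument the paper defers to (the proofs of Lemma~2.3 of \cite{Groeneboom:2001fp} and Lemma~8.9 of \cite{Doss:2016ux}): first extract the tangency data $(\HR-\YR)(\tau_i)=0$ and $(\HR-\YR)'(\tau_i)=0$ from Conditions~\ref{item:inequalities-RR} and~\ref{item:equalities-RR} (zero integral of a nonpositive integrand against the nonpositive measure $d(\rxy)'$, whose support contains every point of $\Sxy$, plus continuity and the interior-maximum argument), then solve the cubic Hermite interpolation on the affine stretch. Your derivation also implicitly corrects a typo in the displayed formula, which is missing a factor of $(t-\bar\tau)$ multiplying $\frac{4}{(\nabla\tau)^3}\left(\bar{X}_R\,\nabla\tau-\nabla Y_R\right)$ (hence the unbalanced parenthesis); with that factor restored, your symmetric/antisymmetric split yields exactly the midpoint identity $\HR(\bar\tau)=\bar{Y}_R-\frac{1}{8}\nabla X_R\,\nabla\tau$.
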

  \begin{proof}
    The proof follows from the proofs of
    Lemma~2.3 of
    \cite{Groeneboom:2001fp},
    and
    Lemma 8.9 of
    \cite{Doss:2016ux}. \qed
  \end{proof}

  \smallskip

  \noindent
  The previous lemma is used to prove the next lemma,  about the ``knot'' behavior of $\rxy$.  

  \begin{lemma}
    Fix $t > 0$.  Let $\tauplusa(t)$ be the infimum of the points of touch of $Y_R$ and $H_R$ in $[t,\infty)$.  Then for all $\epsilon > 0$, there exists $M$, independent of $t$, such that
    $P( \tauplusa(t) - t > M) < \epsilon$.  An analogous statement can be made for the left-side processes, $t < 0$, and the supremum of the points of touch of $Y_L$ and $H_L$ in $(-\infty,t]$.
  \end{lemma}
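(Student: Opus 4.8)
The plan is to show that, on an interval between two consecutive touch points, the invelope $\HR$ is a cubic that cannot stay below $\YR$ once the interval is long, because the drift in \eqref{eq:20} forces a deterministic quartic curvature into $\YR$; the midpoint formula of Lemma~\ref{lem:process-cubic-representation} makes this quantitative and, crucially, location-independent, which yields the bound uniform in $t$.

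First I would reduce the claim to a bound on the length of the maximal affine piece of $\rxy$ containing $t$. If $\tauplusa(t) - t > M$ then $\rxy$ has no knot in $[t,t+M]$, so the maximal interval $[\tau_-,\tau_+]$ on which $\rxy$ is affine and which contains $t$ has $\tau_+ = \tauplusa(t) > t+M$ and $\tau_- \le t$, hence length $L := \tau_+ - \tau_- > M$. Here $\tau_\pm \in \Sxy$ are consecutive touch points, using that $\Sxy$ is the common touch set (via Condition~\ref{item:equalities-RR} together with $\HR \le \YR$ from Condition~\ref{item:inequalities-RR} of Theorem~\ref{thm:characterization-RR-sec1}). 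At such interior touch points one has $\HR = \YR$ and, since $\HR - \YR \le 0$ attains its maximum value $0$ there, also $\HR' = \YR'$; thus on $[\tau_-,\tau_+]$ the cubic $\HR$ is the Hermite interpolant of $\YR$ matching value and first derivative at $\tau_\pm$.

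Next I would split $\YR = p + B$ into its deterministic part $p$, obtained by integrating the drift $-12u^2$ twice (so $p$ is a quartic with $p^{(4)} \equiv -24$), and the stochastic part $B$ of twice-integrated Brownian increments. Because Hermite interpolation reproduces cubics exactly and depends on the interpolated function only through its fourth derivative, the deterministic interpolation error is exactly $-(u-\tau_-)^2(u-\tau_+)^2$, independent of the location of the interval and of the random reference point $\TauR$ (which alters $B$ only by an affine function, invisible to the error). Evaluating at the midpoint $\bar\tau = (\tau_-+\tau_+)/2$ with Lemma~\ref{lem:process-cubic-representation}, which gives $\HR(\bar\tau) = \bar{\YR} - \tfrac18 \nabla \XR\,\nabla\tau$, the constraint $\HR(\bar\tau) \le \YR(\bar\tau)$ becomes $\beta \ge L^4/16$, where $\beta$ is the midpoint Hermite error of the Brownian part $B$. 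A Brownian scaling argument shows $\beta$ is mean-zero Gaussian with $\beta \overset{d}{=} L^{3/2} Z$ for a fixed Gaussian $Z$, its law depending neither on the interval's location, by stationarity of increments, nor on $\TauR$.

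The heart of the argument is the mismatch of orders: positivity forces the $O_P(L^{3/2})$ quantity $\beta$ to exceed the deterministic $L^4/16$, an event of probability $P(Z \ge L^{5/2}/16) \le \exp(-cL^5)$ for large $L$, uniformly over the location of the interval. To pass to $P(\tauplusa(t) - t > M) < \epsilon$ for $M$ large and all $t$, I would control the supremum of this midpoint functional over the random interval straddling $t$: decompose over dyadic scales $L = 2^j \ge M$ and, at each scale, over an $O(1)$ grid of candidate left endpoints in $[t-2L,t]$, apply the Gaussian tail to each candidate, pass from the grid to the continuum by the a.s.\ continuity of $\beta$ in its endpoints, and sum the super-exponential tails to get $\sum_{2^j \ge M} \exp(-c\,2^{5j}) \to 0$, independently of $t$ by the location-invariance above; the symmetric treatment handles the left-side processes. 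The main obstacle I anticipate is precisely this last step — converting the clean fixed-interval large-deviation estimate into a bound valid for the unknown interval containing $t$, uniformly in $t$ — which relies on the stationarity of the Brownian Hermite error together with a chaining and union bound rendered routine by the large gap between the $L^4$ threshold and the $L^{3/2}$ fluctuation scale.
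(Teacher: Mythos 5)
Your core mechanism --- the cubic representation of Lemma~\ref{lem:process-cubic-representation} viewed as two-point Hermite interpolation, the mismatch between the deterministic $L^4$ quartic error and the $O_P(L^{3/2})$ Gaussian fluctuation, and a dyadic union bound to get uniformity in $t$ --- is exactly the analysis the paper invokes (it is the argument of Lemma~2.7 of \cite{Groeneboom:2001fp}), and for excursion intervals lying entirely inside $(0,\infty)$ your sketch is sound. The gap is in your opening reduction: you assert that both endpoints $\tau_\pm$ of the maximal affine interval of $\rxy$ containing $t$ are touch points of the \emph{right-side} pair $(\HR,\YR)$. This fails exactly when that interval straddles the constraint point. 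If $t<\TauR$, then (since $\TauR$ is itself a touch point of $\HR$ and $\YR$, by construction $(\HR-\YR)(\TauR)=(\HR-\YR)'(\TauR)=0$) the event $\lb \tauplusa(t)-t>M\rb$ forces $\TauR>t+M$, and the maximal affine interval is $[\TauL,\TauR]$ with $\tau_-=\TauL<0$. At $\TauL$ one has $\HL=\YL$ and $\HL'=\YL'$, but there is no contact condition between $\HR$ and $\YR$ there, and Condition~\ref{item:inequalities-RR} gives $\HR\le\YR$ only on $[0,\infty)$; so $\HR$ is \emph{not} the Hermite interpolant of $\YR$ on $[\tau_-,\tau_+]$, the midpoint identity is unavailable, and this event escapes your union bound entirely. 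This is not a boundary nuisance: for every $t>0$ the event in question contains $\lb\TauR>t+M\rb$, so what your argument leaves unproven is precisely the tightness of $\TauR$ (and $|\TauL|$), which cannot be assumed here without circularity.

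Moreover, this case admits no one-sided patch, and it is where the constrained problem genuinely departs from \cite{Groeneboom:2001fp}. On $[0,\TauR]$ you have only a single double contact (at $\TauR$): a cubic with a double contact at $\TauR$ can remain below $\YR$ on an arbitrarily long interval simply by diving downward, so the inequality $\HR\le\YR$ alone produces no contradiction and no probability estimate. What excludes this scenario is the two-sided structure: diving down makes $(\HR-\YR)(0)$ very negative, hence by Condition~\ref{item:middle-equality-RR} also $(\HL-\YL)(0)$, and one must then play this against the left-side constraint $\HL\le\YL$ and the double contact at $\TauL$. That combined left/right argument --- the limit-problem analogue of the two-sided perturbation behind Lemma~\ref{lem:TaunR-Op} --- is exactly what the paper imports from the proof of Lemma~8.10 of \cite{Doss:2016ux}, whose mode-constrained setting has the same linked pair of processes; the unconstrained analysis of \cite{Groeneboom:2001fp} does not cover it. Two lesser points: your location-invariance claim holds only for intervals away from $0$ (the constrained problem is not translation invariant, which is another symptom of the same issue), and the passage from a grid to the continuum of candidate intervals should be done with a quantitative bound (e.g.\ Borell--TIS at each dyadic scale, using that the maximal variance at scale $L$ is of order $L^3$) rather than by ``a.s.\ continuity''; you should also note explicitly that $\tauplusa(t)=\infty$ has probability zero, since no cubic can stay below the $-u^4$ drift of $\YR$ indefinitely.
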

  \begin{proof}
    The result follows from Lemma~\ref{lem:process-cubic-representation}, via the analysis used
    in the  the proof of Lemma~8.10 of
    \cite{Doss:2016ux}
    (see also Lemma~2.7 of
    \cite{Groeneboom:2001fp}).
    \qed
  \end{proof}

  \smallskip

  \noindent
  The uniqueness of $\rxy$ follows from showing that if two different processes both satisfy the characterizing conditions of the theorem then they are equal.  One considers the cases where the two processes share (sequences of) knots (converging to infinity) or they do not.  The following lemma handles the former case.

  \smallskip

  \begin{lemma}
    Suppose $G_{R,1}$ and $G_{R,2}$ both satisfy the conditions of Theorem~\ref{thm:characterization-RR-sec1} on $H_R$ and $G_{L,1}$ and $G_{L,2}$ satisfy the theorem conditions for $H_L$ (we do not assume a priori that they have the same value for $\TauR$ or $\TauL$, respectively).  Let $r_1 := G_{R,1}'' \equiv G_{L,1}''$ and
    $r_2 := G_{R,2}'' \equiv G_{L,2}''$.    If
    $(G_{R,i}-Y_{R,i})(s_R) = 0$ and $(G_{L,i}-Y_{L,i})(s_L) = 0$, $i=1,2$,
    where $Y_{R,i}$, $Y_{L,i}$ are defined
    by \eqref{eq:defn:YR-YL} based on the knots of $r_i$, $i=1,2$,
    then  $r_1 = r_2 $ on $[s_L,s_R]$.
  \end{lemma}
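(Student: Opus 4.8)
The plan is to adapt the standard ``diagonal'' uniqueness argument for invelope processes (cf. \cite{Groeneboom:2001fp,Doss:2016ux}) to the value-constrained, two-sided setting. I write $\Phi_{R,i} := G_{R,i} - Y_{R,i}$ and $\Phi_{L,i} := G_{L,i} - Y_{L,i}$ for $i=1,2$, and set $D_R := \Phi_{R,1} - \Phi_{R,2}$, $D_L := \Phi_{L,1} - \Phi_{L,2}$. The goal is to show
\[
  \int_{s_L}^{s_R} (r_1 - r_2)^2 \, d\lambda \le 0,
\]
which forces $r_1 = r_2$ on $[s_L,s_R]$ by continuity. I obtain one inequality from the characterizing Conditions~\ref{item:inequalities-RR} and \ref{item:equalities-RR} of Theorem~\ref{thm:characterization-RR-sec1}, and the matching identity from two integrations by parts; the boundary terms are then controlled by the touch conditions at $s_L,s_R$ and the middle-equality condition at $0$.

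First I would establish the variational inequality. Using Condition~\ref{item:inequalities-RR} (so $\Phi_{R,i}\le 0$ on $[0,\infty)$, $\Phi_{L,i}\le 0$ on $(-\infty,0]$) together with the fact that each $r_i'$ is nonincreasing (so $dr_i'$ is a nonpositive measure), every cross term $\int \Phi_{R,j}\, dr_i'$ is nonnegative, while Condition~\ref{item:equalities-RR} gives $\int \Phi_{R,i}\, dr_i' = 0$; since the integrand $\Phi_{R,i}\, dr_i'$ is a nonnegative measure of total mass $0$, it vanishes on every subinterval, so $\int_{[0,s_R]}\Phi_{R,i}\, dr_i' = 0$ too. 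Expanding $I_R := \int_{[0,s_R]} D_R\, d(r_1'-r_2')$ into its four terms then yields $I_R \le 0$, and symmetrically $I_L := \int_{[s_L,0]} D_L\, d(r_1'-r_2') \le 0$.

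Next comes the integration-by-parts identity. The crucial observation is that $Y_{R,1}-Y_{R,2}$ is affine: since $X_{R,i}(u) = X(u) - X(\tau_{R,i})$, the difference $X_{R,1}-X_{R,2}$ is constant, so $Y_{R,1}-Y_{R,2}$ is linear and $D_R'' = G_{R,1}'' - G_{R,2}'' = r_1 - r_2$ (and likewise $D_L'' = r_1 - r_2$). Each $\Phi_{R,i}$ is $C^1$ with absolutely continuous derivative, so two integrations by parts (justified by Lemma~\ref{lem:integration-by-parts}) give
\[
  I_R = \bigl[ D_R (r_1'-r_2')\bigr]_0^{s_R} - \bigl[ D_R'(r_1-r_2)\bigr]_0^{s_R} + \int_0^{s_R}(r_1-r_2)^2\, d\lambda,
\]
with the analogous expression for $I_L$ on $[s_L,0]$. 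For the boundary terms at $s_R$: because $\Phi_{R,i}\le 0$ with $\Phi_{R,i}(s_R)=0$ and $s_R$ interior, $s_R$ maximizes the $C^1$ function $\Phi_{R,i}$, so $\Phi_{R,i}'(s_R)=0$; hence $D_R(s_R)=D_R'(s_R)=0$ and both contributions at $s_R$ vanish (symmetrically at $s_L$). At $0$ the constraint $r_i(0)=0$ kills the $[D_R'(r_1-r_2)]$ and $[D_L'(r_1-r_2)]$ terms, while Condition~\ref{item:middle-equality-RR} gives $\Phi_{R,i}(0)=\Phi_{L,i}(0)$, hence $D_R(0)=D_L(0)$; since each $r_i$ is affine on $[\tau_{L,i},\tau_{R,i}]\ni 0$, it is differentiable at $0$, so the remaining central contributions $-D_R(0)(r_1'(0)-r_2'(0))$ from $I_R$ and $+D_L(0)(r_1'(0)-r_2'(0))$ from $I_L$ cancel. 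Summing then gives $I_L+I_R = \int_{s_L}^{s_R}(r_1-r_2)^2\,d\lambda$, and combined with $I_L,I_R\le 0$ this yields $r_1=r_2$ on $[s_L,s_R]$.

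The hard part will be the bookkeeping at the central point $0$: one must verify that the value constraint indeed places $0$ in the affine region $(\tau_{L,i},\tau_{R,i})$ of each $r_i$, so that $r_i$ is genuinely differentiable at $0$ and the one-sided derivatives agree, and, should $0$ coincide with a knot $\tau_{R,i}$ or $\tau_{L,i}$, to handle the resulting kink separately; because the cancellation of the $0$-boundary terms rests entirely on the middle-equality condition, this gluing of the two half-line arguments is the delicate heart of the proof. A secondary technical point is maintaining the two integrations by parts at the correct (bounded-variation) regularity, for which I would invoke the formulas of Lemma~\ref{lem:integration-by-parts}.
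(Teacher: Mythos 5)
Your proof is correct, but it takes a genuinely different route from the paper's. The paper proves this lemma by reduction to already-built machinery: it verifies that, for each $i$, the processes $H_R, H_L$ of Theorem~\ref{thm:characterization-RR-sec1} satisfy the hypotheses of Corollary~\ref{cor:uniqueness-cpct-knot-endpoints} on $[s_L,s_R]$ (the touch conditions at $\TauR,\TauL$ and at $s_R,s_L$ supply the defining constants and, via Condition~\ref{item:inequalities-RR}, the derivative conditions \eqref{eq:3}), then chooses common $M$, $k^\pm$ so that both $r_1$ and $r_2$ lie in the same class $\cC^\circ_{c,k}$; both restrictions are then minimizers of the strictly convex functional $\phi_c$, hence equal by uniqueness of that minimizer. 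You instead give a direct variational argument in the style of the uniqueness proofs of \cite{Groeneboom:2001fp}: the sign and orthogonality conditions (Conditions~\ref{item:inequalities-RR} and \ref{item:equalities-RR}) force $I_R, I_L \le 0$, while two integrations by parts --- using the key observation that $Y_{R,1}-Y_{R,2}$ and $Y_{L,1}-Y_{L,2}$ are affine, so the Brownian parts cancel and $D_R'' = D_L'' = r_1 - r_2$ --- together with the touch conditions at $s_R,s_L$, the constraint $r_i(0)=0$, and Condition~\ref{item:middle-equality-RR} give $I_R + I_L = \int_{s_L}^{s_R}(r_1-r_2)^2\,d\lambda$. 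Your approach is more self-contained (it does not route through Proposition~\ref{prop:cpct-existence-uniqueness} and the sufficiency half of Theorem~\ref{thm:characterization-compact}, with their almost-sure and random-endpoint caveats), whereas the paper's buys brevity and reuses the compact-interval characterization, which it needs anyway. One remark on the point you flag as delicate: you do not actually need $r_i$ to be differentiable at $0$ (and indeed it need not be when $\tau_{L,i}=\tau_{R,i}=0$, which the paper's remark allows). If you take $r_i'$ to be the right-continuous (right-derivative) version throughout and run both integrations by parts over the half-open intervals $(0,s_R]$ and $(s_L,0]$ via Lemma~\ref{lem:integration-by-parts}, then the two central boundary terms are $-D_R(0)\,(r_1'-r_2')(0+)$ and $+D_L(0)\,(r_1'-r_2')(0+)$, which cancel exactly by $D_R(0)=D_L(0)$; no case analysis on kinks at $0$ is needed, and the variational inequalities $I_R,I_L\le 0$ survive the passage to half-open intervals because the measures $\Phi_{R,i}\,dr_i'$ and $\Phi_{L,i}\,dr_i'$ are nonnegative with total mass zero, hence vanish on every Borel subset.
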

  \begin{proof}
    This follows from Corollary~\ref{cor:uniqueness-cpct-knot-endpoints}.  We let $Y_L,Y_R$, $H_L$ and $H_R$ be as defined in Theorem~\ref{thm:characterization-RR-sec1},  we  assume $H_R(s_R)-Y_L(s_R)=0$ and $H_L(s_L)-Y_L(s_L)=0$, and we will show that the conditions of Corollary~\ref{cor:uniqueness-cpct-knot-endpoints} are satisfied by $H_L$ and $H_R$.  This will then show the statement of the lemma.
    Since $H_R(\TauR)-Y_R(\TauR)=0$ by definition, we see that $H_R$ is the primitive of the primitive of $H_R''$ satisfying the constant conditions (at $c^+=s_R$ and at $\TauR$) used to define $H_R$ in Corollary~\ref{cor:uniqueness-cpct-knot-endpoints}.  Furthermore, by Condition~\ref{item:inequalities-RR} and because $(H_R-Y_R)(\TauR)=0$, we see that $(H_R-Y_R)'(\TauR) = 0$.  A similar argument can be made for the left-side processes.  Since $H_R^{(3)}$ is finite on $\RR$, the condition $|H_R^{(3)}(c^\pm)|  < \infty$ is automatically satisfied (for either the left or right third derivative).
    Therefore we have shown that the conditions of
    Corollary~\ref{cor:uniqueness-cpct-knot-endpoints}
    are satisfied.
    We apply this to $G_{R,1}$ and $G_{R,2}$. Let
    \begin{equation*}
      M : = \sup_{x \in [s_L,s_R]} \lb r_1(x), r_2(x) \rb, \,
      k^+ : = \min ( r_1(s_R), r_2(s_R)), \,
      \text{ and } \,
      k^- := \min (r_1(s_L), r_2(s_L)).
    \end{equation*}
    Then $r_i \in \cC^\circ_{c,k}$ for $i=1,2$, and both the $i=1$ and $i=2$ processes satisfy the conditions of Corollary~\ref{cor:uniqueness-cpct-knot-endpoints} by the argument in the previous paragraph, so $r_1=r_2$ on $[s_L,s_R]$ as desired. %
    \qed
  \end{proof}

  \smallskip

  \noindent  For the remainder of the proof of Theorem~\ref{thm:characterization-RR-sec1}, one considers cases where on either the left side, the right side, or both sides, there is no sequence of shared touch points converging to infinity, and deriving a contradiction. The argument follows as in the proof of Theorem~5.2 of \cite{Doss:2016ux}.
  This completes the proof
  of Theorem~\ref{thm:characterization-RR-sec1}. \qed
\end{proof}


\noindent
\begin{remark}
  \label{rem:limit-process-rescaling}
  If $X$ is replaced by $X_{a,\sigma}(t) := \sigma W(t) - 4 a t^3$ for
  constants $a, \sigma > 0$, then the conclusion of
  Theorem~\ref{thm:characterization-RR-sec1} still holds; in this case, we
  denote the process $\rxy$ of the theorem by $\rxy_{a,\sigma}$.
\end{remark}

\begin{remark}
  The knot definitions in Theorem~\ref{thm:characterization-RR-sec1} differ from those in Theorem~5.2 of \cite{Doss:2016ux}, in the context of a mode constraint.  Condition~(iii) of Theorem~5.2 of \cite{Doss:2016ux} (which is analogous to Condition~\ref{item:equalities} of Theorem~\ref{thm:characterization-compact}) is based on knots $\tau_+^0 $ and $\tau_-^0$, one (but almost surely not both) of which may be $0$.  These knots are potentially distinct from the knots $\TauL$ and $\TauR$ in that setup, where $\TauL,\TauR$ can never be $0$.
  In the height-constrained problem we consider in this paper, there is only one pair of knots, $\TauL,\TauR$, and they may be $0$; if one is $0$ then both are $0$.
\end{remark}

\section{Asymptotics}
\label{sec:asymptotics}

We can now study the asymptotic behavior of $\rnxy$.  To do so, we will make
the following assumptions on the design.
\begin{assumption}
  \label{assm:design-density-1}
  The design points $\xni \in [0,1]$ satisfy $c/n \le \xni[n,i+1]-\xni \le C/n$,
  $i=1,\ldots,n-1$ for some $0 < c < C < \infty$.  
\end{assumption}
\begin{assumption}
  \label{assm:local-uniform-design-2}
  For $0 \le x \le 1$, let $F_n(x) := n^{-1} \sum_{i=1}^n  \one_{[0,x]}(\xni)$.  There exists $\delta > 0$ such that $\sup_{x : |x-x_0| \le \delta} |F_n(x) - x| = o(n^{-1/5})$.
\end{assumption}

\medskip

\begin{theorem}
  \label{thm:constrained-regression-asymptotics-sec1}
  Suppose that the regression model \eqref{eq:regression-augmented} holds
  where $r_0$ is concave with $r_0(x_0)=y_0$,
  suppose that $\epsilon_{n,1},\ldots,\epsilon_{n,n}$ are i.i.d.\ with
  $E^{\epsilon_{n,1}^2 t} < \infty$ for some $t > 0$,
  that $r_0$ is twice continuously differentiable in a neighborhood of $x_0$, and that $r_0''(x_0)<0$.  Let Assumptions \ref{assm:design-density-1} and \ref{assm:local-uniform-design-2} hold.  Let $a = |r_0''(x_0)|/24$, and $\sigma^2 = \Var \epsilon_{n,i}$.  Let $\rnxy = \argmin_r \phi_n(r)$ where the argmin is over 
  concave functions $r$ such that $r(x_0)=y_0$. 
  Then
  \begin{equation*}
    n^{2/5}
    ( \rnxy(x_0 + t n^{-1/5}) - r_0(x_0) - r_0'(x_0) t n^{-1/5}) \to_d
    \rxy_{a,\sigma}(t)
  \end{equation*}
  in $L^p[-K,K]$ for all $K > 0$,
  where
  $\rxy_{a,\sigma}$ is given in Theorem~\ref{thm:characterization-RR-sec1} (and Remark~\ref{rem:limit-process-rescaling}).
\end{theorem}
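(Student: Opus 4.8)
The plan is to follow the localization-and-characterization strategy used for unconstrained concave regression (Theorem~\ref{thm:GJWb-regression-asymptotics}), now driven by the finite-sample optimality conditions of Theorem~\ref{prop:rnxy-characterization} and closed off by the uniqueness result of Theorem~\ref{thm:characterization-RR-sec1}. By the auxiliary-data reduction of Section~\ref{sec:regression-LSEs-finitesample} we may take $x_0=0$, $r_0(x_0)=0$, and $r_0'(x_0)=0$, so that the centering terms vanish and the target is the value-constrained invelope $\rxy_{a,\sigma}$ with $a=|r_0''(0)|/24$. Writing $g_n(t) := n^{2/5}\rnxy(tn^{-1/5})$, the goal is $g_n \to_d \rxy_{a,\sigma}$ in $L^p[-K,K]$.

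First I would establish an invariance principle for the driving processes. The conditions in Theorem~\ref{prop:rnxy-characterization} are phrased through the partial sums $\SnLk,\SnRk$ and the nested sums in \eqref{eq:rnxy-charzn-LHS}--\eqref{eq:rnxy-charzn-connect-sides}; under $t\mapsto tn^{-1/5}$, with the appropriate powers of $n^{1/5}$, I would show these converge to the process $X_{a,\sigma}(t)=\sigma W(t)-4at^3$ of Remark~\ref{rem:limit-process-rescaling} and its iterated integrals $\XR,\XL,\YR,\YL$. The stochastic part follows from Donsker's theorem for the i.i.d.\ errors; the deterministic part is a Riemann-sum approximation in which twice continuous differentiability of $r_0$ with $r_0''(0)<0$ supplies the leading term $\tfrac12 r_0''(0)v^2$, whose iterated integral is exactly the cubic drift $-4at^3$, while Assumptions~\ref{assm:design-density-1}--\ref{assm:local-uniform-design-2} (in particular the $o(n^{-1/5})$ control on $F_n$) guarantee the approximation error is negligible at the $n^{-1/5}$ scale.

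Next I would prove tightness of $g_n$ together with localization of its knots: that the nearest knot of $\rnxy$ on either side of $x_0$ lies within $O_p(n^{-1/5})$ of $x_0$, and more generally that the knots bracketing any fixed $t$ stay $O_p(1)$ apart after rescaling. This adapts the finite-sample analogues of the points-of-touch lemmas used to prove Theorem~\ref{thm:characterization-RR-sec1} (cf.\ Lemma~\ref{lem:process-cubic-representation} and the lemmas following it), with the negative curvature $r_0''(0)<0$ providing the concave drift that forces knots to accumulate near $x_0$. Given tightness, along any subsequence one extracts (by the Skorokhod representation) an almost-sure limit $g_\infty$, and one passes the conditions of Theorem~\ref{prop:rnxy-characterization} to the limit: the nested-sum inequalities \eqref{eq:rnxy-charzn-LHS} and \eqref{eq:rnxy-charzn-RHS} become $(\HL-\YL)(t)\le 0$ for $t\le 0$ and $(\HR-\YR)(t)\le 0$ for $t\ge 0$; the cross-side equality \eqref{eq:rnxy-charzn-connect-sides} becomes $(\HR-\YR)(0)=(\HL-\YL)(0)$; and the knot equalities become the integral conditions \eqref{eq:characterization-compact-equality-integral-condition-RR}. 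Hence $g_\infty$ satisfies the hypotheses of Theorem~\ref{thm:characterization-RR-sec1}.

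Finally, by the uniqueness conclusion of Theorem~\ref{thm:characterization-RR-sec1}, every subsequential limit equals $\rxy_{a,\sigma}$, which upgrades subsequential convergence to convergence in distribution of the whole sequence, and the $L^p[-K,K]$ statement follows from the tightness established in that topology. I expect the \emph{tightness-and-knot-localization} step to be the main obstacle: ruling out that the localized estimator or its nearest knots escape is delicate in shape-constrained problems, and here it is further complicated by the equality constraint $r(x_0)=0$, which couples the two sides of $x_0$ through \eqref{eq:rnxy-charzn-connect-sides} and has no counterpart in the unconstrained argument.
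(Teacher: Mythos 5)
Your proposal has the same skeleton as the paper's proof: translate so that $x_0=0$ and $r_0(0)=r_0'(0)=0$; turn the finite-sample optimality conditions of Theorem~\ref{prop:rnxy-characterization} into inequality/equality statements for knot-anchored integrated processes; prove weak convergence of the driving processes (the paper's $Y_n$ and $\YnRloc$, where the latter requires extra care because its definition involves the random knot $\TaunR$ --- a subtlety your invariance-principle step does not mention); establish tightness and knot localization; extract subsequential limits via Prohorov's theorem; check that they satisfy Conditions~\ref{item:middle-equality-RR}--\ref{item:equalities-RR} of Theorem~\ref{thm:characterization-RR-sec1}; and close with uniqueness. Your translation of \eqref{eq:rnxy-charzn-LHS}--\eqref{eq:rnxy-charzn-connect-sides} into the three limit conditions is exactly what the paper does.

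Where you genuinely depart from the paper is at the step you correctly single out as the crux: proving $n^{1/5}\TaunR=O_p(1)$. The paper does this (Lemma~\ref{lem:TaunR-Op}) by a perturbation argument in the spirit of Mammen and Balabdaoui--Rufibach--Wellner, not by adapting the points-of-touch lemmas: one takes the piecewise-linear perturbation $\Delta_1(t) = t\one_{[\TaunL,0)}(t) + (\TaunL/\TaunR)^3\, t\, \one_{[0,\TaunR]}(t)$, which vanishes at $0$ (so it respects the constraint; note that $\int \Delta_1\, d\lambda = 0$ is \emph{not} needed here, because the constraint forces $\rnxy(0)-r_0(0)=0$ and so kills the zeroth-order Taylor term) and satisfies $\int u\,\Delta_1(u)\,du=0$ (killing the first-order term); the optimality conditions then give $\int\Delta_1(\rnxy-r_0)\,dF_n\ge\int\Delta_1(Y-r_0)\,dF_n$, whose left side is $\asymp -(\TaunR-\TaunL)^4$ by $r_0''(0)<0$ and whose right side is $O_p(n^{-4/5})$ by Mammen's empirical-process bound. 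Your alternative --- finite-sample analogues of the points-of-touch lemmas --- is not a routine adaptation in this setting: those lemmas (via Lemma~\ref{lem:process-cubic-representation}) run a midpoint-representation argument on an interval between two touch points of a \emph{single} invelope/data pair, whereas for the constrained estimator the critical gap $[\TaunL,\TaunR]$ straddles the constraint point, and the finite-sample processes are anchored separately on the two sides (the right-side pair at $\TaunR$, the left-side pair at $\TaunL$), coupled only through the one equality at $0$ coming from \eqref{eq:rnxy-charzn-connect-sides}. There is no single ``invelope below data, touching at both endpoints'' inequality spanning that gap on which to run the midpoint argument, so your route would require a nontrivial stitching of the two sides that you do not supply. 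This is precisely why the paper switches techniques for this lemma; if you keep your route, that stitching is the gap you would have to fill.
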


\begin{remark}
  We suspect asymptotic distributions and the Wilks phenomenon for $2 \log
  \lambda_n(y_0)$ can be derived under more general conditions than
  Assumption~\ref{assm:local-uniform-design-2}, but this assumption is used
  by \cite{Groeneboom:2001jo} (it is their Assumption 6.1) to derive the
  limit distribution of $\rn(x_0)$, so we rely on it here too and leave
  generalizations for future research.
\end{remark}

\begin{remark}
  We require a sub-Gaussian tail assumption on $\epsilon_{n,i}$ in Theorem~\ref{thm:constrained-regression-asymptotics-sec1}.  In \cite{Brunk:1970wj}, the asymptotic distribution for a monotone regression function estimator is derived under only second moment assumptions for the error variables.  However, for deriving the rates of convergence for concave regression least-squares estimators, \cite[Theorem 4]{Mammen:1991ey} (and then \cite{Groeneboom:2001jo}) assume sub-Gaussian tails on the error variables.  \cite[page 749]{Mammen:1991ey} states, ``We do not believe that this strong condition is really necessary.''  However, in the present paper we have not attempted to weaken this assumption.
\end{remark}

\begin{proof}[of Theorem~\ref{thm:constrained-regression-asymptotics-sec1}]
  We take $x_0 = 0$ for simplicity and take $r_0(0)=0 $ and $r_0'(0)=0$ by the translation discussed in Section~\ref{sec:regression-LSEs-finitesample}.  Let $x_n(t):= tn^{-1/5} = tn^{-1/5} + x_0$ be the ``global'' parameter corresponding to the ``local'' parameter $t \in \RR$.  Then let $\TaunR$ be the smallest nonnegative bend point of $\rnxy$.
  Recall $F_n(x) := n^{-1} \sum_{i=1}^n \one_{[0,x]}(\xni)$.  Then define
  \begin{align*}
    \SS_{n,R}(v) & :=  \int_{[\TaunR,v]} Y(u) \, d F_n(u)  &
    \YnRloc(t) & :=  n^{4/5} \int_{\TaunR}^{x_n(t)}  \SS_{n,R}(u) du \\
    \RR_{n,R}(v)  & :=  \int_{[\TaunR,v]} \rnxy(u) \, dF_n(u)  &
    H_{n,R}(t) & :=  n^{4/5} \int_{\TaunR}^{x_n(t)} \RR_{n,R}(u) du
    + A_{n,R} (t - n^{1/5} \TaunR),
  \end{align*}
  where $Y$ is the function such that $Y(\xni) = \Wni$ (and whose value is $0$ elsewhere), and where
  \begin{equation*}
    A_{n,R} :=
    n^{3/5} \int_{[\TaunR, \infty)} Y(u) - \rnxy(u)    \, dF_n(u).
  \end{equation*}
  Define also
  \begin{equation*}
    \RRnRt(v) := \int_{[\TaunR, v]} \rnxy(u) du,
    \quad
    \HnRt(t) :=  n^{4/5} \int_{[\TaunR, x_n(t)]} \RRnRt(u) du
    + A_{n,R} (t - n^{1/5} \TaunR)
  \end{equation*}
  which we will show to be equivalent to $\RRnR$ and to $\HnR$, respectively.  For brevity, we will make definitions and arguments only for the right-side processes.  Analogous definitions and arguments can be made for the left-side processes.

  By Theorem~\ref{prop:rnxy-characterization}, one can check that
  \begin{align*}
    \HnR(t) - \YnRloc(t)  \le 0
  \end{align*}
  for all $ t \ge 0 $, with equality if $t$ is a knot point of $\rnxy$ (see Lemma 8.18 of \cite{Doss:2016ux} for similar calculations).  Additionally, defining $Y_{n,L}$ and $H_{n,L}$ in an analogous fashion as $\YnRloc$ and $\HnR$, we can check that
  \begin{equation}
    \label{eq:23}
    H_{n,L}(0)-Y_{n,L}(0) = \HnR(0)-\YnRloc(0).
  \end{equation}
  Next, we can check that
  \begin{equation}
    \label{eq:22}
    \sup_{ |t| \le c} |\HnR(t) - \HnRt(t) | = o_p(1)
  \end{equation}
  for any $c > 0$, by Assumption~\ref{assm:local-uniform-design-2} \cite[see page 1696]{Groeneboom:2001jo}.  One can define $\tilde{H}_{n,L}$ and make an analogous statement for $\tilde{H}_{n,L}$ and $H_{n,L}$.
  We can then conclude that
  \begin{equation}
    \label{eq:24}
    \tilde H_{n,L}(0)-Y_{n,L}(0) = \HnRt(0)-\YnRloc(0) + o_p(1),
  \end{equation}
  \begin{align}
    \HnRt(t) - \YnRloc(t)  + o_p(1) & \le 0, \label{eq:HRtilde-ineq} \\
    \tilde H_{n,L}(t)  - Y_{n,L}(t) + o_p(1)  & \le 0 \label{eq:HLtilde-ineq}
  \end{align}
  where the inequalities are equalities for knot points of $\rnxy$.

  Let
  $\SS_n(t):= n^{-1} \sum_{i=1}^n \Wni \one_{\lb \xni \le t \rb}$,
  and let $Y_n(t) := \int_{x_0}^{x_n(t)} ( \SS_n(v) - \SS_n(x_0) ) dv $.  Then, for any $c > 0$, we can then check that $Y_n$ converges weakly to $\sigma \int_0^t W(s)ds - a t^4 = Y_{a,\sigma}(t)$ in the space of continuous functions on $[-c,c]$ with the uniform metric (\cite[(6.12), page 1694]{Groeneboom:2001jo}).  A similarly structured argument shows that
  along certain subsequences of $\lb n \rb_1^\infty$,
  $\YnRloc$ converges to a process $Y_{R, a, \sigma} \equiv \YR$ (which may a priori depend on the subsequence, but eventually is seen not to depend on the subsequence).  The convergence argument for $\YnRloc$ requires more care than that for $Y_n$ because the definition of the former depends on the knot $\TaunR$.  Nonetheless it can be rigorously carried out, in a fashion similar to that of the proofs of Lemmas 8.16 and 8.17 of \cite{Doss:2016ux}.

  Then the remainder of the proof follows as in the proof of Theorem 6.3 of \cite{Groeneboom:2001jo} (see also \cite{Mammen:1991ey}) and of Theorem 5.8 of \cite{Doss:2016ux}.
  By Lemma~\ref{lem:TaunR-Op} below,  $n^{1/5} \TaunR = O_p(1)$, and this allows us to also conclude that $\HnRt$ and its first, second, and third derivatives are all tight in appropriate metric spaces. Then, by Prohorov's theorem, for any subsequence we can find a subsubsequence of $\HnRt$ that converges to a limit process, $H_R$.  The processes  $H_R$ and $Y_R$ can be shown to satisfy $H_R(t) - Y_R(t) \le 0$ for $t \ge 0$ and $ \int_{[0,\infty)} (H_R-Y_R) \, d(H_R)^{(3)} = 0$ by
  \eqref{eq:HRtilde-ineq}. Arguing analogously for left-side processes, we can see that there are limit processes $H_L$ and $Y_L \equiv Y_{L,a,\sigma}$ satisfying $H_L(t) - Y_L(t) \le 0$, and $ \int_{(-\infty,0]} (H_L-Y_L) d(H_L)^{(3)} = 0$  by
  \eqref{eq:HLtilde-ineq}, and by \eqref{eq:24} that $H_L(0)-Y_L(0)= H_R(0)-Y_R(0)$. This shows conditions \ref{item:middle-equality-RR}, \ref{item:inequalities-RR}, and \ref{item:equalities-RR} of Theorem~\ref{thm:characterization-RR-sec1} hold for the processes $H_L$ and $H_R$.  Therefore, the limit processes $H_L$ and $H_R$ are unique, so are identical along all subsequences.  That is, we can conclude that $\tilde{H}_{n,L}$ and $\HnRt$ converge to the unique processes $H_L$ and $H_R$ given by Theorem~\ref{thm:characterization-RR-sec1}.  In particular, we have shown that $n^{2/5} \rnxy(tn^{-1/5}) = (\HnRt)''(t) \equiv (\tilde H_{n,L})''(t)$ converges to $(H_L)'' \equiv (H_R)''$ and so (recalling that $x_0 = 0$ and $r(0)=r'(0)=0$ by assumption) the proof is complete. \qed
\end{proof}

\begin{lemma}
  \label{lem:TaunR-Op}

  Let the assumptions and terminology of Theorem~\ref{thm:constrained-regression-asymptotics-sec1} hold, and let $\TaunR$ be the smallest nonnegative bend point of $\rnxy$.  Then $n^{1/5} \TaunR = O_p(1)$.
\end{lemma}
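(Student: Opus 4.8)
The plan is to show that the first bend point of $\rnxy$ strictly to the right of $x_0 = 0$ occurs at local distance $O_p(1)$, i.e.\ at global distance $O_p(n^{-1/5})$. The guiding heuristic is that forcing $\rnxy$ to be affine over a long interval to the right of $0$ is badly suboptimal against the strictly concave truth (recall $r_0''(0) < 0$), and that this suboptimality is detected by the optimality conditions. I would make this precise through the localized sum and invelope processes $\YnRloc$ and $\HnR$ introduced in the proof of Theorem~\ref{thm:constrained-regression-asymptotics-sec1}: by Theorem~\ref{prop:rnxy-characterization} these satisfy $\HnR(t) - \YnRloc(t) \le 0$ for $t \ge 0$, with equality exactly at the (local) bend points of $\rnxy$. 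Hence $n^{1/5}\TaunR$ is precisely the smallest positive touch point of $\HnR$ and $\YnRloc$, and it suffices to bound this first touch point in probability.

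The core of the argument is a drift-domination estimate, the finite-sample counterpart of the touch-point lemma proved above for the limit problem (and of Lemma~2.7 of \cite{Groeneboom:2001fp} and Lemma~8.10 of \cite{Doss:2016ux}). Between two consecutive bend points $\rnxy$ is affine, so $\HnR$ is cubic there; the finite-sample analog of the cubic representation of Lemma~\ref{lem:process-cubic-representation} expresses $\HnR$ at the midpoint of such an interval in terms of the endpoint values and increments of $\YnRloc$ and of the partial-sum process $\SS_{n,R}$. Combining this representation with the inequality $\HnR \le \YnRloc$ evaluated at the midpoint of $[0,\TaunR]$ yields a relation that forces the gap to be small. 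Concretely, I would decompose $\YnRloc$ into a signal part, which converges to the quartic drift $-a t^4$ and therefore contributes a concavity gap of order $a\,(n^{1/5}\TaunR)^4$, and a centered-noise part built from integrated partial sums of the $\epsilon_{n,i}$, which is only of order $(n^{1/5}\TaunR)^{3/2}$. A cubic tangent to $\YnRloc$ at $0$ and at $\TaunR$ cannot remain below $\YnRloc$ once the quartic term dominates, so $n^{1/5}\TaunR > M$ would force a violation of $\HnR \le \YnRloc$ with probability tending to one as $M \to \infty$.

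The main obstacle is not the heuristic but the uniformity: the noise part of $\YnRloc$ must be controlled simultaneously over the growing window $t \in [0,M]$ as $M$ grows, and the processes are anchored at the random knot $\TaunR$ rather than at a deterministic point. For the first issue I would invoke the sub-Gaussian assumption $Ee^{t\epsilon_{n,1}^2} < \infty$, exactly as in \cite{Mammen:1991ey} and \cite{Groeneboom:2001jo}, to get exponential maximal inequalities showing that the supremum over $0 \le t \le M$ of the centered part of $\YnRloc$ is $O_p(M^{3/2})$ up to logarithmic factors, which is dominated by $aM^4$; Assumptions~\ref{assm:design-density-1} and \ref{assm:local-uniform-design-2} provide the control of $F_n$ needed so that the signal part matches $-a t^4$ uniformly on compacta. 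For the random anchor, I would follow the treatment of $\YnRloc$ in the proof of Theorem~\ref{thm:constrained-regression-asymptotics-sec1} (and Lemmas~8.16--8.17 of \cite{Doss:2016ux}), reducing to deterministic anchors by a localization and subsequence argument. Assembling these estimates gives, for every $\epsilon > 0$, an $M$ with $P(n^{1/5}\TaunR > M) < \epsilon$ for all large $n$, which is the claim.
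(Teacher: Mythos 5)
Your proposal takes a genuinely different route from the paper, and its central step has a genuine gap. The paper does not argue via touch points at all: it runs a perturbation (KKT) argument in the spirit of Mammen and Balabdaoui--Rufibach--Wellner, inserting the explicit two-sided perturbation $\Delta_1(t) = t\one_{[\TaunL,0)}(t) + (\TaunL/\TaunR)^3\, t\,\one_{[0,\TaunR]}(t)$ into the Fenchel inequality \eqref{eq:Fenchel-finite-T}, bounding the resulting empirical-process side by $O_p(n^{-4/5})$, and showing the signal side equals $-C_{r_0}(\TaunR-\TaunL)^4(1+o_p(1))$, whence $(\TaunR-\TaunL)^4 = O_p(n^{-4/5})$. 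Your first problem is that there is no touch, let alone tangency, of $\HnR$ and $\YnRloc$ at $0$: in the nontrivial case $0$ is not a knot of $\rnxy$, the two processes (both anchored at $\TaunR$) coincide only at knots, and at $0$ the characterization supplies only the cross-side identity \eqref{eq:23}, $H_{n,L}(0)-Y_{n,L}(0)=\HnR(0)-\YnRloc(0)\le 0$.

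The second problem is fatal to the one-sided mechanism itself: the inequality $\HnR\le\YnRloc$ on $[0,n^{1/5}\TaunR]$ cannot force tightness, because the cubic has a free parameter. Write $T=n^{1/5}\TaunR$. The constraints you actually have are $\HnR(T)=(\HnR)'(T)=0$ (anchoring at the knot) and $(\HnR)''(0)\approx n^{2/5}\rnxy(0)=0$; the general cubic satisfying these is $C(t)=\alpha(t-T)^2(t+2T)$, where $6\alpha$ is the local-scale slope of the affine segment of $\rnxy$ through the constraint point. The drift of $\YnRloc$ is $D(t)=-a(t-T)^2(t^2+2Tt+3T^2)$, and $C\le D$ on $[0,T]$ holds if and only if $\alpha\le -2aT$. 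So for every $T$, however large, admissible cubics lie below the drift: the quartic does not dominate a cubic pinned at only one end. Geometrically, $\alpha=-2aT$ is exactly $\rnxy$ equal to the chord of the limiting parabola $-12au^2$ from $0$ to $\TaunR$, and a chord of a concave function lies below it; the right-side conditions raise no objection to an arbitrarily long chord. (In the unconstrained Groeneboom--Jongbloed--Wellner argument the cubic is pinned by value and slope at \emph{two} touch points, leaving no freedom; that is what produces the $h^4$-versus-$h^{3/2}$ contradiction you are trying to import.) What rules out a long affine stretch is intrinsically two-sided: the same line must also stay below the left-side drift, which by reflection demands a large \emph{positive} slope, incompatible with the large negative slope demanded on the right; any repair of your argument must couple the two sides, e.g.\ through \eqref{eq:23}, which is exactly what the paper's perturbation $\Delta_1$ (supported on both $[\TaunL,0)$ and $[0,\TaunR]$) accomplishes. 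A secondary gap: identifying the signal part of $\YnRloc$ with the quartic drift requires Taylor-expanding $r_0$ at $x_0$ over all of $[0,\TaunR]$, but $r_0$ is assumed strictly concave only near $x_0$ (it may be affine elsewhere), so this step presupposes the very localization of $\TaunR$ that the lemma is meant to establish.
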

\begin{proof}
  The proof is by a perturbation argument in the spirit of Theorem~4.3 (and Lemma~4.4) of \cite{Balabdaoui:2009eh} and Proposition~7.3 of \cite{Doss:2016ux} (which in turn are inspired by 
  Lemma~8
  of \cite{Mammen:1991ey}).
  If $0$ is itself a knot of $\rnxy$ then there is nothing to show (because $\TaunR = 0$).  Thus we assume $0$ is not a knot of $\rnxy$.  We will construct a `perturbation' %
  $\Delta \colon \RR \to \RR$
  such that %
  $\ip{\eval_{\nn} \Delta, \nabla_{\nn} \phi_n(\urnxy)} =  \ip{ \eval_{\nn} \Delta, \urnxy - \uYn}\ge 0$ %
  as in \eqref{eq:Fenchel-finite-T},
  where $\uYn := (Y_{n,1}, \ldots, Y_{n,n})'$ (recalling $Y_{n,k^0}:=0$ if $n_0 = n+1$).
  This  implies
  \begin{equation}
    \label{eq:15}
    \int \Delta(u) (\rnxy(u) - r_0(u)) \, dF_n(u) \ge \int \Delta(u) (Y(u) - r_0(u)) \, dF_n(u)
  \end{equation}
  using the notation developed in the proof of Theorem~\ref{thm:constrained-regression-asymptotics-sec1}.  The approach is to find a $\Delta$ such that the quantity on the left side of \eqref{eq:15} is a positive constant times $-\TaunR^4 < 0$, and the quantity on the right side of \eqref{eq:15} is $O_p(n^{-4/5})$. The conclusion then follows.  
  

  Let $\TaunL < 0 < \TaunR$ be the largest negative and smallest positive knots of $\rnxy$, respectively.  
  Assume $\TaunR \le |\TaunL|$, without loss of generality.  
  Let
  \begin{equation*}
    \Delta_1(t) :=   
    t \one_{[\TaunL, 0)} + \lp \frac{ \TaunL}{\TaunR}\rp^3 t \one_{[0, \TaunR]}
  \end{equation*}
  which satisfies $\Delta_1(0)=0$.
  A simple argument
  (see Lemma A.4 of \cite{Dumbgen:2009bw}) shows that even though $\Delta_1$ is discontinuous, the conclusion of \eqref{eq:15} holds, meaning
  \begin{equation}
    \label{eq:21}
    \int \Delta_1(u) (\rnxy(u) - r_0(u)) \, dF_n(u) \ge \int \Delta_1(u) (Y(u) - r_0(u)) \, dF_n(u).
  \end{equation}
  Further,
  \begin{equation}
    \label{eq:26}
    \int \Delta_1(u) u du = 0,
  \end{equation}
  which will later allow us to ignore a term in a Taylor expansion.
  (Note that in \cite{Balabdaoui:2009eh} and 
  \cite{Doss:2016ux}
the perturbation must satisfy  $\int \Delta(x) dx = 0$; in the present case it turns out we do not need this to hold because of the constraint $\rnxy(0)=0$.  On the other hand, we must have $\Delta(0)=0$.)
Now, the empirical process argument used in the proof of Theorem~4 of \cite{Mammen:1991ey} shows that the term on the right of \eqref{eq:21} is $O_p(n^{-4/5})$.  
For the term on the left, we can show that $ \int \Delta_1(u) (\rnxy(u) - r_0(u)) \, dF_n(u) = (1 + o_p(1)) \int \Delta_1(u) (\rnxy(u) - r_0(u)) \, du$ as in \eqref{eq:22}, by Assumption~\ref{assm:local-uniform-design-2}.  Let $D := \rnxy - r_0$.  Since $\rnxy$ is linear on $(\TaunL, \TaunR)$ and $\rnxy(0)=0$,  $\rnxy(u) =  (\rnxy)'(0) u$ for $u \in [\TaunL,\TaunR]$ and so by \eqref{eq:26} and a Taylor expansion of $r_0$ about $0$
(recalling   that $\lambda$ is Lebesgue measure),
  \begin{align}
    \int \Delta_1  D d\lambda
    & =    D(0) \int \Delta_1 d\lambda + D'(0) \int u \Delta_1(u) du
    - \int \Delta_1(u)
    \frac{r_0''(0)}{2} u^2 (1 + o_p(1)) du  \label{eq:no-knot-exp} \\
    & = - \int \Delta_1(u)
    \frac{r_0''(0)}{2} u^2  (1 + o_p(1))du, \label{eq:R1-final}
  \end{align}
  We compute  that $\int u^2 \Delta_1(u) du = (- (\TaunL)^4 + (\TaunL)^3 \TaunR)/4 \le -(\TaunL)^4 /4 < 0$.  Thus we can conclude that the quantity on the left of \eqref{eq:21} equals $-C_{r_0} (\TaunR-\TaunL)^4 (1 + o_p(1))$ for a constant $C_{r_0} > 0$ (since $r_0''(0) < 0$ and $|\TaunL| \ge \TaunR$). 
Thus the proof is complete. \qed
\end{proof}

\subsection{The likelihood ratio statistic}
\label{subsec:LRS-convergence}

Here we present a partial  proof of Conjecture~\ref{conj:Wilks-phenomenon}.  We will break $2 \log \lambda_n$ into two terms, a ``main'' term and a ``remainder'' term. We focus on the main term, which drives the limit distribution (according to simulations), and do not analyze the remainder term (which Conjecture~\ref{conj:Wilks-phenomenon} and simulations would imply to be asymptotically negligible).  To begin, we need to discuss certain rescalings of the processes studied in the previous sections.  For $a , \sigma > 0$, let $X_{a,\sigma}(t) := \sigma W(t) - 4 a t^3$ as in Remark~\ref{rem:limit-process-rescaling}, and, correspondingly, let
\begin{equation}
  \label{eq:16}
  Y_{a,\sigma}(t) :=
  \sigma \int_0^t W(s) ds - at^4
  \stackrel{d}{=} \sigma ( \sigma/a)^{3/5} Y( (a/\sigma)^{2/5} t),
\end{equation}
where the equality in distribution can be checked using the fact that
$W( \alpha \cdot) \alpha^{-1/2} \stackrel{d}{=} W(\cdot)$ for any $\alpha>0$.
Let
$H_{a,\sigma}$ be the
invelope process given by
Theorem~\ref{thm:GJW-process-uniqueness}
based on $Y_{a,\sigma}$,
and let $H^0_{a,\sigma}$ denote either of the  (null hypothesis) invelope processes,
$\HR$ or $\HL$,
given by Theorem~\ref{thm:characterization-RR-sec1}
based on $Y_{a,\sigma}$.
By \eqref{eq:16},
\begin{align*}
  H_{a,\sigma}(t) & \stackrel{d}{=}
  \sigma ( \sigma/a)^{3/5} H_{1,1}( (a/\sigma)^{2/5} t),  
  \quad 
  \text{ and } 
  \quad 
  H^0_{a,\sigma}(t) & \stackrel{d}{=}
  \sigma ( \sigma/a)^{3/5} H_{1,1}^0( (a/\sigma)^{2/5} t).
\end{align*}
Let $\r_{a,\sigma}(t):= (H_{a,\sigma})''(t)$ and $\rxy_{a,\sigma}(t) := (H^0_{a,\sigma})''(t)$ (recall $\HL'' \equiv \HR''$).
Then we have
\begin{align}
  \r_{a,\sigma}(\cdot) & \stackrel{d}{=}
  \sigma^{4/5} a^{1/5} \r( (a/\sigma)^{2/5} \cdot )
  =: \inv{ \gamma_1 \gamma_2^2} \r(\cdot / \gamma_2),
  \label{eq:r-rescaling} \\
  \rxy_{a,\sigma}(\cdot) & \stackrel{d}{=}
  \sigma^{4/5} a^{1/5} \rxy( (a/\sigma)^{2/5} \cdot )
  =: \inv{ \gamma_1 \gamma_2^2} \rxy(\cdot / \gamma_2),
  \label{eq:r0-rescaling}
\end{align}
where we let $\gamma_1 := (a/\sigma)^{3/5} / \sigma$ and
$\gamma_2 := (\sigma/a)^{2/5}$.
This allows us to relate the rescaled processes
$\r_{a,\sigma}$ and $\rxy_{a,\sigma}$ (where $a$ will later depend on $r_0$ and $\sigma^2 = \Var(\epsilon_{n,i})$) to the universal processes
$\r$ and $\rxy$.
For our future use, we note the relationship
\begin{equation}
  \label{eq:17}
  \gamma_1 \gamma_2^{3/2} = \sigma^{-1}.
\end{equation}
We have
\begin{equation}
  \label{eq:wilks-conjecture-pf-1}
  0
  \le 2 \log \lambda_n = 2 (\phi_n( \rnxy) - \phi_n( \rn) )
  = - \sum_{i \in \Ix} \rni^2 - (\rnixy)^2 - (2 \rni \Wni - 2 \rnixy \Wni).
\end{equation}
Now by \eqref{eq:Fenchel-conditions-eq},  $\la \rn, \nabla \phi_n(\rn) \ra = 0$ and
$\la \rnxy, \nabla \phi_n(\rnxy) \ra = 0$, so \eqref{eq:wilks-conjecture-pf-1}
equals
\begin{equation}
  \label{eq:wilks-conjecture-pf-2}
  -\sum_{i\in \Ix} \rni^2 - (\rnixy)^2 - 2( \rni^2 - (\rnixy)^2)
  = \sum_{i \in \Ix} \rni^2 - (\rnixy)^2.
\end{equation}
Now
we expect that away from the constraint, $\rni$ and $\rnixy$ are asymptotically equivalent.  In fact,
we expect that
\eqref{eq:wilks-conjecture-pf-2} can be localized to a sum over indices corresponding to  $O_p(n^{-1/5})$ neighborhoods of $x_0$.
To discuss this, we note that
\eqref{eq:wilks-conjecture-pf-2} can be written as
$n \int_{\RR}  \lp \rn(u)^2 - \rnxy(u)^2 \rp \, dF_n(u)$
(recalling $F_n(x) := n^{-1} \sum_{i=1}^n  \one_{[0,x]}(\xni)$).
Then we let
$x_n(t) := x_0 + n^{-1/5} t$, and can then see that $2 \log \lambda_n$ equals
$\DD_{n,b} + E_{n,b}$ where
\begin{align*}
  \DD_{n,b} & :=    n \int_{x_n(-b)}^{x_n(b)}  \lp \rn(u)^2 - \rnxy(u)^2 \rp  dF_n(u), \text{ and}  \\
  E_{n,b} & := n \int_{\RR \setminus [x_n(-b), x_n(b)]} \lp \rn(u)^2 - \rnxy(u)^2 \rp \,   dF_n(u).
\end{align*}
We conjecture that
$E_{n,b}$ is asymptotically negligible for large enough $n$ and $b$.
As was discussed in the introduction, proving that
$E_{n,b}$ is asymptotically negligible
may be quite
challenging.  A result of this sort was shown fully in
\cite{Doss:2016ux,Doss:2016vq} in the context of a likelihood ratio statistic
for the mode of a log-concave density.  In some contexts where the underlying
shape constraint is one of monotonicity rather than convexity/concavity, the
corresponding problem seems to often be simpler \citep{Banerjee:2001jy,
  Banerjee:2000we,Groeneboom:2015ew}.  It is beyond the scope of the present
paper to show $E_{n,b}$ is negligible; here, we focus on the non-negligible
term $\DD_{n,b}$.

Now, by
Assumption~\ref{assm:local-uniform-design-2}, $\DD_{n,b}$ is
equal to (\cite[page 1695]{Groeneboom:2001jo})
\begin{equation}
  \label{eq:9}
  n \int_{x_n(-b)}^{x_n(b)} \rn(u)^2 - \rnxy(u)^2 \, du
  + o(1)
  = n^{4/5} \int_{-b}^b  \rn( x_n(v))^2 - \rnxy( x_n(v))^2 dv + o(1).
\end{equation}
Let
$a= |r_0''(x_0)|/ 24$.  Let
$\SS_n(t):= n^{-1} \sum_{i=1}^n \Wni \one_{\lb \xni \le t \rb}$,
and let
$Y_n(t) := \int_{x_0}^{x_n(t)} ( \SS_n(v) - \SS_n(x_0) ) dv $.
Then, for any $c > 0$, we can then check that $Y_n$
converges weakly to $\sigma \int_0^t W(s)ds - a t^4 = Y_{a,\sigma}(t)$
in the space of continuous functions on $[-c,c]$ with the uniform metric
(see the proof of Theorem~\ref{thm:constrained-regression-asymptotics-sec1}).
Then,
by (the proofs of)
Theorem~\ref{thm:GJWb-regression-asymptotics} and
by Theorem~\ref{thm:constrained-regression-asymptotics-sec1} (recalling that $r_0(x_0)=0$ and $r_0'(x_0)=0$ by our data translation),
$n^{2/5} \rn(x_n(\cdot))$ converges weakly to
$\r_{a,\sigma}$
and
$ n^{2/5} \rnxy( x_n(\cdot))$ converges weakly to $\rxy_{a, \sigma}$.
Thus, the right side of \eqref{eq:9} converges in distribution to
\begin{align}
  \int_{-b}^b \r_{a,\sigma}^2 - (\rxy_{a,\sigma})^2
  &  = \int_{-b}^b \lp \inv{ \gamma_1 \gamma_2^2} \rp^2
  \lp \r \lp \frac{ s }{ \gamma_2} \rp^2  - \rxy  \lp \frac{ s }{ \gamma_2} \rp^2 \rp ds \nonumber  \\
  &  = \gamma_1^{-2} \gamma_2^{-3} \int_{-b/\gamma_2}^{b/\gamma_2}
  ( \r(u)^2 - \rxy(u)^2 ) du
  = \sigma^2 \int_{-b/\gamma_2}^{b / \gamma_2}
  ( \r(u)^2 - \rxy(u)^2 ) du,     \label{eq:wilks-conjecture-pf-limit-integral}
\end{align}
as $n \to \infty,$ by \eqref{eq:r-rescaling} and \eqref{eq:r0-rescaling}, and
recalling that $ \gamma_1^2 \gamma_2^3 = \sigma^{-2}$ by \eqref{eq:17}.  Now if  we let $b \to \infty$ then \eqref{eq:wilks-conjecture-pf-limit-integral} converges to
\begin{equation}
  \label{eq:wilks-limit-3}
  \sigma^2 \int_{-\infty}^\infty   ( \r(u)^2 - \rxy(u)^2 ) du
  =: \DD,
\end{equation}
which does not depend on $r_0$, as desired.  This shows that Conjecture~\ref{conj:Wilks-phenomenon} holds, assuming that $E_{n,b}$ is appropriately negligible.  We thus now state Conjecture~\ref{conj:Wilks-phenomenon} as a theorem under the following assumption on the error term.

\begin{assumption}
  \label{assm:remainder-assm}
  For all small enough $\delta > 0$ there exists $b_\delta > 0$ such that $| E_{n,b_\delta} | \le \delta K$ where $K = O_p(1)$ does not depend on $\delta$.
\end{assumption}

\begin{theorem}
  \label{thm:conjecture-theorem}
  Assume the regression model \eqref{eq:1} holds where $Ee^{t
    \epsilon_{n,i}^2} < \infty$ for some $t > 0$.  Assume ${r}_0$ is concave,
  ${r}_0(x_0) = y_0$, ${r}_0$ is twice continuously differentiable in a
  neighborhood of $x_0$, and ${r}_0''(x_0)<0$.  Let
  Assumption~\ref{assm:design-density-1} and
  \ref{assm:local-uniform-design-2} hold.  Define $2 \log \lambda_n(y_0)$ as
  in \eqref{eq:defn:TLLR}. If
  Assumption~\ref{assm:remainder-assm} holds, then $2 \log \lambda_n(y_0) \to_d \sigma^2
  \DD := \sigma^2 \int_{-\infty}^{\infty} \r(u)^2 - \rxy(u)^2 \, du$.
\end{theorem}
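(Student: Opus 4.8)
The plan is to build directly on the computation already carried out in the paragraphs preceding the theorem, whose job is now simply to be made rigorous and assembled. Recall that, using the Fenchel optimality condition \eqref{eq:Fenchel-conditions-eq} to cancel the cross terms, the statistic was reduced in \eqref{eq:wilks-conjecture-pf-1}--\eqref{eq:wilks-conjecture-pf-2} to $2\log\lambda_n(y_0) = \sum_{i\in\Ix}(\rni^2 - (\rnixy)^2) = n\int_{\RR}(\rn(u)^2 - \rnxy(u)^2)\,dF_n(u)$, and then split, for every $b>0$, as $\DD_{n,b} + E_{n,b}$. So the only work remaining is to (i) identify the limit of the main term $\DD_{n,b}$ under the iterated limit $n\to\infty$ then $b\to\infty$, (ii) discard $E_{n,b}$ via Assumption~\ref{assm:remainder-assm}, and (iii) glue these together with a standard double-limit distributional argument.

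For step (i) I would fix $b$. By Assumption~\ref{assm:local-uniform-design-2} one replaces $dF_n$ by Lebesgue measure at a cost of $o(1)$, and after the substitution $u = x_n(v)$ the main term becomes $\int_{-b}^b\big((n^{2/5}\rn(x_n(v)))^2 - (n^{2/5}\rnxy(x_n(v)))^2\big)\,dv + o(1)$, as in \eqref{eq:9}. Theorem~\ref{thm:GJWb-regression-asymptotics} and Theorem~\ref{thm:constrained-regression-asymptotics-sec1} give the marginal weak limits $n^{2/5}\rn(x_n(\cdot)) \to_d \r_{a,\sigma}$ and $n^{2/5}\rnxy(x_n(\cdot)) \to_d \rxy_{a,\sigma}$ in $L^2[-b,b]$. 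I would upgrade these to joint convergence of the pair by a tightness-plus-subsequence argument: each marginal is tight, so the pair is tight, and along any weakly convergent subsequence the joint limit must have marginals $\r_{a,\sigma}$ and $\rxy_{a,\sigma}$, both of which are determined by the single Brownian-driven limit $Y_{a,\sigma}$ of the common input process $Y_n$; hence the joint limit is the coupling $(\r_{a,\sigma}, \rxy_{a,\sigma})$ built from one realization of $Y_{a,\sigma}$, identical along all subsequences. The functional $(g_1,g_2)\mapsto \int_{-b}^b(g_1^2 - g_2^2)$ is continuous on $L^2[-b,b] \times L^2[-b,b]$, so the continuous mapping theorem yields $\DD_{n,b}\to_d Z_b := \int_{-b}^b(\r_{a,\sigma}^2 - \rxy_{a,\sigma}^2)$. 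The rescaling identities \eqref{eq:r-rescaling}, \eqref{eq:r0-rescaling} and \eqref{eq:17} rewrite $Z_b = \sigma^2\int_{-b/\gamma_2}^{b/\gamma_2}(\r(u)^2 - \rxy(u)^2)\,du$, and letting $b\to\infty$ gives $Z_b \to \sigma^2\DD$ almost surely, provided $\DD = \int_{-\infty}^\infty(\r^2 - \rxy^2) < \infty$ almost surely.

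For steps (ii) and (iii): Assumption~\ref{assm:remainder-assm} supplies $|E_{n,b_\delta}| \le \delta K$ with $K = O_p(1)$ independent of $\delta$, so for every $\eps>0$ one has $\limsup_n P(|E_{n,b_\delta}| > \eps) \le \limsup_n P(\delta K > \eps) = P(K > \eps/\delta) \to 0$ as $\delta\searrow 0$; thus $E_{n,b}$ is uniformly (in $n$) negligible as $b$ grows. Combining this with $\DD_{n,b}\to_d Z_b$ for each fixed $b$ and $Z_b \to_d \sigma^2\DD$ as $b\to\infty$, a standard double-limit lemma for convergence in distribution (the ``convergence-together'' theorem; see, e.g., Theorem~3.2 of Billingsley, \emph{Convergence of Probability Measures}) delivers $2\log\lambda_n(y_0) = \DD_{n,b} + E_{n,b} \to_d \sigma^2\DD$, as claimed.

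Since Assumption~\ref{assm:remainder-assm} hands us the genuinely hard part (the negligibility of $E_{n,b}$, which is exactly the crux of the full conjecture and is not proved here), I expect the main obstacle to lie in the two analytic points underlying step (i). The first is upgrading the marginal weak limits to a \emph{joint} convergence of $(\rn,\rnxy)$ in a topology strong enough ($L^2$ on compacts) for the quadratic functional to be continuous; this rests on both estimators being driven by the same empirical/partial-sum process, and I would discharge it through the tightness argument sketched above. The second, and more delicate, is establishing that $\DD$ is almost surely finite and that $Z_b\to\sigma^2\DD$, which requires that $\r$ and $\rxy$ coincide outside an almost-surely compact neighborhood of the origin, so that the integrand $\r^2 - \rxy^2$ has integrable, effectively compact, support. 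This is plausible because the constraint $\rxy(0)=0$ perturbs the unconstrained invelope only locally, but it should be argued carefully from the cubic representation in Lemma~\ref{lem:process-cubic-representation} together with the touch-point control established by the lemmas within the proof of Theorem~\ref{thm:characterization-RR-sec1}.
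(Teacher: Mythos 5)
Your proposal is correct and follows essentially the same route as the paper's proof: the same decomposition $2\log\lambda_n = \DD_{n,b} + E_{n,b}$, the same identification of the limit of $\DD_{n,b}$ via \eqref{eq:wilks-conjecture-pf-limit-integral} and the rescaling identities, and the same use of Assumption~\ref{assm:remainder-assm} to discard the remainder. The only difference is the gluing device at the end---you invoke the convergence-together (approximation) theorem, while the paper extracts sub-subsequences via Prohorov's theorem and then lets $\delta \searrow 0$---and these are equivalent; your explicit treatment of the joint convergence of $(\rn,\rnxy)$ and of the almost-sure finiteness of $\DD$ makes rigorous two points the paper leaves implicit in the discussion preceding the theorem.
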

\begin{proof}
  For any $\delta > 0$, for a subsequence of $\lb n \rb_{n=1}^\infty $, there exists a subsubsequence such that along the subsubsequence $E_{n, b_{\delta}} \to_d \delta R$ where $|R| \le K$ almost surely, by Prohorov's theorem and Assumption~\ref{assm:remainder-assm}.  Thus since $\DD_{n,b_{\delta}} \to_d \sigma^2 \int_{-b_\delta / \gamma_2}^{b_\delta / \gamma_2} \r(u)^2 - \rxy(u)^2 \, du =: \sigma^2 \DD_{b_\delta}$ as $n \to \infty$ by \eqref{eq:wilks-conjecture-pf-limit-integral}, we see that $2 \log \lambda_n \to_d \sigma^2 \DD_{b_\delta} + \delta R$ along the subsubsequence.  Taking, say, $\delta = 1$, we see that $2 \log \lambda_n$ has a (tight) limit, which we denote by $\sigma^2 \DD$, along the subsubsequence.  Since $K$ does not depend on $\delta$, we can let $\delta \searrow 0$ so $\delta R \to_p 0$, and since then $b_\delta \nearrow \infty$  we thus see that $\sigma^2 \DD := \sigma^2 \DD_{b_\delta} + \delta R \to \sigma^2 \int_{-\infty}^{\infty} \r(u)^2 - \rxy(u)^2 \, du$ so $\DD = \int_{-\infty}^{\infty} \r(u)^2 - \rxy(u)^2 \, du$.  Thus, along the subsubsequence $2 \log \lambda_n \to_d \sigma^2 \int_{-\infty}^{\infty} \r(u)^2 - \rxy(u)^2 \, du$; since this holds for an arbitrary subsequence, the convergence holds along the original sequence. This completes the proof. \qed
\end{proof}

\section{Simulations}
\label{sec:simulations}

We now use simulation studies to assess our procedures.  First, we give evidence in Figure~\ref{fig:LRasymptotics} that Conjecture~\ref{conj:Wilks-phenomenon} holds.  We simulated from three different true concave regression functions, $-x^2$, $\cos(x)$, and $-\exp(x)$.  We used a fixed design setting, with $n=1000$ points uniformly spaced along an interval.  For $-x^2$ and $\cos(x)$ the intervals were $[-1,1]$.  For $-\exp(x)$ the interval was $[1,3]$.  We used standard normal error terms.
Figure~\ref{fig:LRasymptotics} gives empirical cdfs based on $M=5000$ Monte Carlo replications of the distribution of $2 \log \lambda_{1000}$ for the three regression functions.  The curves are visually indistinguishable, giving evidence in support of Conjecture~\ref{conj:Wilks-phenomenon}.
The curve labeled ``limit'' is based on simulating directly from the distribution of $\DD$.  To do this, we simulated the process $X(t) = W(t) -4t^3$ and computed the limit process $\r$ from
Theorem~\ref{thm:GJW-process-uniqueness}
and $\rxy$ from
Theorem~\ref{thm:characterization-compact}
based on the `data' $X$.
We then computed
$\DD = \int_{\RR} ( \r^2(t) - (\rxy)^2(t) )dt$.
The actual form of the limit is not fundamental to Conjecture~\ref{conj:Wilks-phenomenon}.
However the simulation results reported in  Figure~\ref{fig:LRasymptotics}
appear to indeed show that $\DD$ has this form, since the
``limit'' curve is visually indistinguishable from the other three curves described above.
The final curve is the cdf of a chi-squared distribution with $1$ degree of freedom.  This would be the limit of the likelihood ratio statistic if this were a regular parametric problem, but is distinct from the limit of our likelihood ratio statistic, in this nonparametric problem.

\begin{table}
  \centering
  \begin{tabular}{cccccc}
    $r_0$ &  $x_0$ & $r_0(x_0)$ & $r_0'(x_0)$ & $r_0''(x_0)$ & $d(r_0)$ \\
    \hline
    $-x^2$ &  $0$ & $0$ & $0$ & $-2$ & $1.64$ \\
    $\cos(x)$  & $-\inv{2}$ & $.878$ & $.479$ & $-.878$ & $1.94$ \\
    $-\exp(x)$  & $2$ &  $-7.39$ &  $-7.39$ & $-7.39$ & $1.27$
  \end{tabular}
  \caption{Characteristics of the true concave regression function used in the Monte Carlo simulations.}
  \label{tab:simulation-characteristics}
\end{table}

\begin{table}
  \centering
  \begin{tabular}{cccccc}
    &  $r_0$ & $\alpha = .05$; f & $\alpha = .1$; f  & $\alpha = .05$; r & $\alpha = .1$; r \\
    \hline
    \multirow{3}{*}{$n=1000$}
    & $-x^2$ & $.0466$ & $.0964$ & $.0579$ & $.112$ \\
    & $\cos(x)$ & $.0501$ & $.106$  &  $.0680$ & $.131$  \\
    & $-\exp(x)$ & $.0464$ & $.0964$ & $.0469$ & $.0957$ \\
    \hline
    \multirow{3}{*}{$n=100$}
    & $-x^2$ & $.0527$ & $.108$ & $.0514$ & $.107$ \\
    & $\cos(x)$ & $.0670$ & $.125$  &  $.0650$ & $.126$  \\
    & $-\exp(x)$ & $.0451$ & $.0990$ & $ .0447$ & $.0979$ \\
    \hline
    \multirow{3}{*}{$n=30$}
    & $-x^2$ & $.0591$ & $.116$ & $.0578$ & $.110$ \\
    & $\cos(x)$ & $.0683$ & $.127$  &  $.0694$ & $.127$  \\
    & $-\exp(x)$ & $.0495$ & $.105$ & $ .0455$ & $.0996$
  \end{tabular}
  \caption{Monte Carlo level of the likelihood ratio test procedure for nominal levels $\alpha=.05, .1$.  The column heading ``f'' denotes fixed design, and ``r'' denotes random design.  Results are based on sample sizes of $n=30, 100$, and $1000$, and $M=20000$ Monte Carlo replications.}
  \label{tab:LRT-level}
\end{table}

Thus, with Conjecture~\ref{conj:Wilks-phenomenon} in mind, we implemented our likelihood ratio test for the hypothesis test \eqref{eq:LR-hypothesis-test}, rejecting when $2 \log \lambda_n(y_0) > d_\alpha$, $\alpha \in (0,1)$, where $d_\alpha$ is based on the simulated limit distribution in Figure~\ref{fig:LRasymptotics}.  Specifically, we used the curve based on $-x^2$ with a Gaussian error distribution as the limit distribution for $2 \log \lambda$.  We tested the level under the null hypothesis via Monte Carlo.  Our simulations were based on sample sizes of either $n=30$, $n=100$, or $n=1000$, and $M=20000$ Monte Carlo replications. We used the three $r_0$'s of $-x^2$, $\cos(x)$, and $-\exp(x)$ again on the same intervals listed above.  Two designs were used for each of the $r_0$'s; a fixed design, uniformly spaced, and a random uniform design (although the random design is not covered by our theory).
The reported results are for a standard normal error distribution.
Table~\ref{tab:simulation-characteristics} gives the $x_0$ used for each function's hypothesis test, and $r_0(x_0)$, the true value (which was used for the null hypothesis).  We also report smoothness characteristics of $r_0$ at $x_0$, which could in general affect inference procedures, including the constant $d(r_0) := (24 / \sigma^4 |r_0''(x_0)|)^{1/5}$ with $\sigma = 1$, from \eqref{eq:GJW-asymptotics}.  Table~\ref{tab:LRT-level} gives the simulated levels from the Monte Carlo experiments. The  third and fourth columns give the Monte Carlo level of the test procedure for the two nominal levels of $\alpha=.05$ and $\alpha=.1$, respectively, in the fixed design setting.  The fifth and sixth columns give the results in the random design settings.  The %
results
for $\cos(x)$ were generally the worst, which is perhaps %
attributable to having $x_0$ closer to the edge of the covariate design interval than in the scenarios for the other two regression functions.  Shape constrained estimators suffer near the covariate domain boundary.  We do not present simulation results for coverage of our confidence intervals, since by definition the probability our confidence intervals fail to cover the truth is exactly equal to the level of the corresponding hypothesis test.  We present in
Figure~\ref{fig:CI-bands-plot} a plot of our confidence interval procedure on a single instance of simulated data.

\begin{figure}%
  \centering
  \includegraphics[width=.6\textwidth]{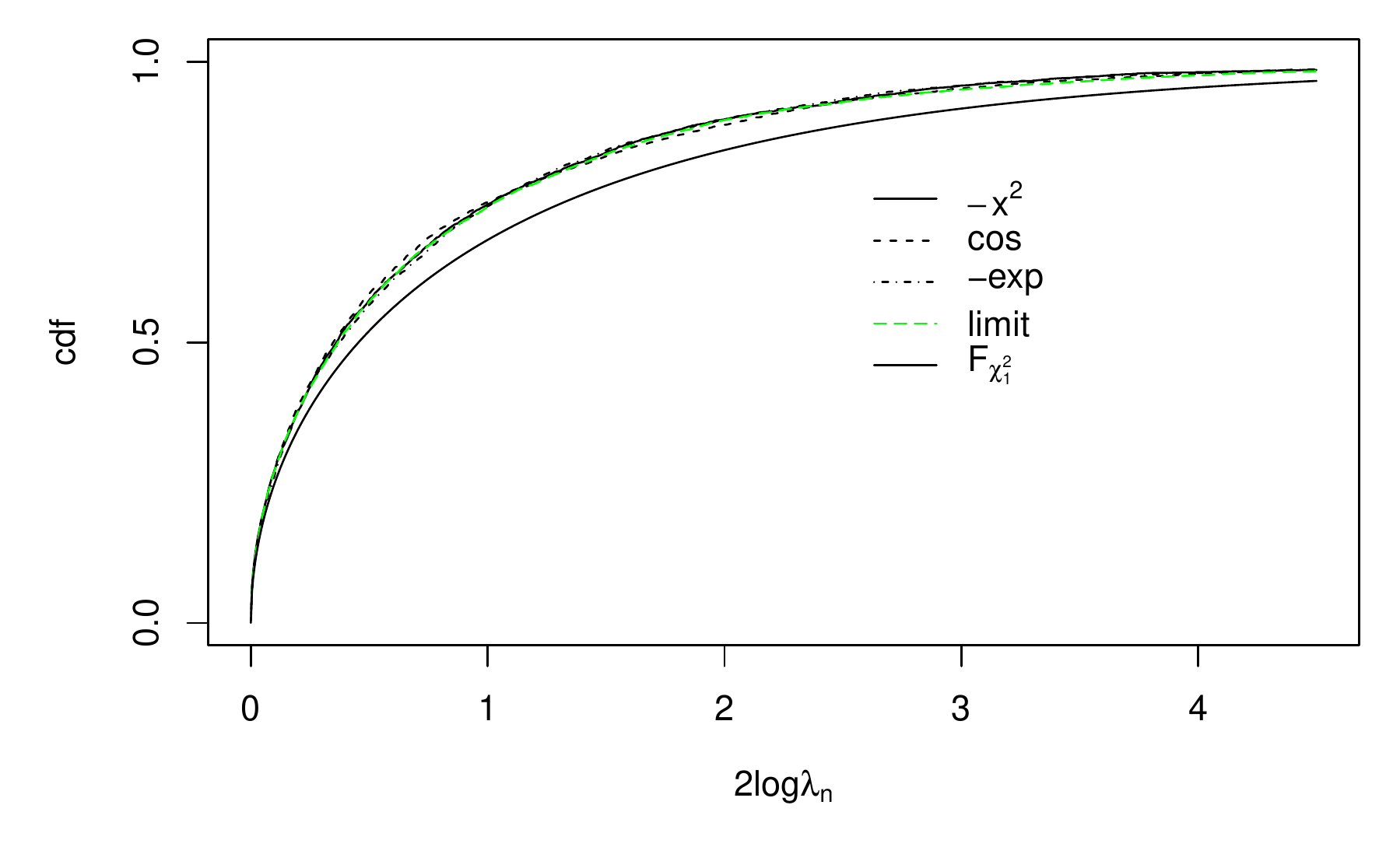}
  \caption{Empirical distributions of $2 \log \lambda_n$ for three different true concave regression functions: $-x^2$, $\cos(x)$, and $-e^x$, all with $n=1000$, $M = 5000$ replications.}
  \label{fig:LRasymptotics}
\end{figure}

\begin{figure}%
  \centering
  \includegraphics[width=1\textwidth,height=.45\textwidth]{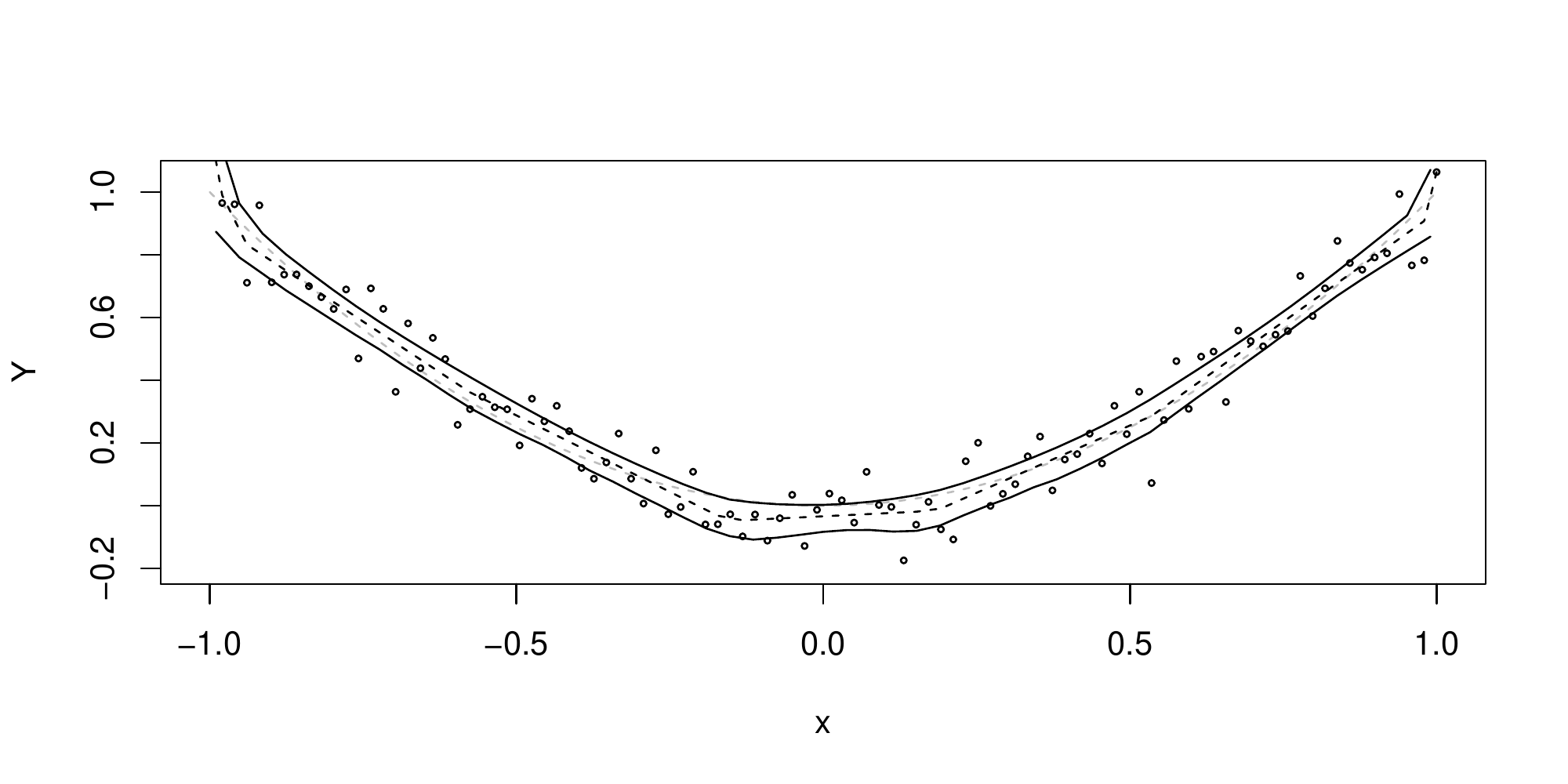}
  \caption{Pointwise confidence interval (solid lines) at each $x \in (-.99,.99)$ via our log likelihood ratio procedure.  Gray dashed line is the true concave regression function $r_0(x)=  x^2$;
    circular points are the simulated data, where $Y_i = r_0(x_i) + \epsilon_i$, $\epsilon_i \iid N(0,.1^2)$, $i=1,\ldots,100$; black dashed line is the ALSE.  }
  \label{fig:CI-bands-plot}
\end{figure}

\section{Conclusions and related problems}
\label{sec:conclusion}

There are several problems related to the concave regression problem discussed in this paper.  We mention two here: the problem of forming tests/CI's
for the value of of a univariate log-concave density, and the problem of forming tests/CI's for the value of a concave/convex regression function with multivariate predictors.

\medskip

{\bf A likelihood ratio for the value of a log-concave density on $\RR$:} In the problem of univariate log-concave density estimation, it is known that the limit distribution of the (univariate) LSE for concave regression \cite{Groeneboom:2001jo} and the (univariate) maximum likelihood estimator for log-concave density estimation \cite{Balabdaoui:2009eh} have the same universal component (they differ in terms of problem-dependent constants).  For studying a height-constrained estimator in the log-concave density problem, the class of interest does not immediately form a convex cone,  but by translation of the log-densities one can arrive at a convex cone.
Consider now $ \tilde X_1, \ldots, \tilde X_n \iid \tilde f_0 = e^{\tilde \vp_0}$ on $\RR$ where $\tilde{\vp}_0 \in \cC$.
Assume $\tilde f_0(x_0) = e^{y_0}$, or $\tilde{\vp_0}(x_0) = y_0$. The nonparametric log likelihood is  $f \mapsto \sum_{i=1}^n \log f(X_i)$.  Following \cite{Silverman:1982vj}, we modify this by a Lagrange term (which allows us to optimize over all concave $\vp$ without regard to the constraint that $\int e^{\vp(z)} dz = 1$).  Optimizing over $\vp  = \log f$, the unconstrained log-concave MLE \cite{Pal:2007eu}  is
$\widehat{\vp}_n := \argmax_{\vp \in \cC} \inv{n} \sum_{i=1}^n
\vp(\tilde{X}_i) - \int e^{\vp(z)} dz.$
As in the concave regression problem, we let $\cCxy := \{ \vp \in \cC \ | \ \vp(x_0)=0 \}.$
We can then consider defining
  $\widehat{\vp}_n^0 := \argmax_{\vp \in \cCxy} \inv{n} \sum_{i=1}^n \vp(\tilde X_i ) + y_0
  - \int e^{y_0 + \vp(z )} dz.$
We can combine $\vvn$ and $\vvna$ to form a likelihood ratio statistic for testing $H_0: \tilde{\vp}(x_0)=y_0$ against $H_1: \tilde{\vp}(x_0) \ne y_0$.  We expect that $\widehat{\vp}_n^0 $ will share features with $\rnxy$ and that the likelihood ratio statistic formed from $\widehat{\vp}_n$ and $\widehat{\vp}_n^0$ will share features with the likelihood ratio statistic \eqref{eq:defn:TLLR} discussed in this paper.  We would expect that it will in fact have the same universal limit distribution $\DD$, independent of nuisance parameters.

\medskip

{\bf A likelihood ratio for the value of a multivariate concave regression function:} Consider the regression model
\begin{equation}
  \label{eq:regression-multivariate}
  \tilde Y_i = \tilde r_0( \tilde x_i) + \epsilon_i,
  \qquad
  i=1,\ldots,n,
\end{equation}
where $\epsilon_i$ are mean $0$ and now $\tilde x_i \in \RR^d$ with $d > 1$.  We are again interested in assuming $\tilde r_0$ is concave and consider estimating it by least-squares, as in \cite{Kuosmanen:2008ks}, \cite{Seijo:2011ko}, and \cite{Lim:2012do}.   We could also consider a constrained estimator as in
\eqref{eq:defn:UC-VC-LSEs}, and form a likelihood ratio statistic for inference about $\tilde r_0(x_0)$ at a fixed point $x_0 \in \RR^d$.
Unfortunately, in the multidimensional case there is no easy analog for Proposition~\ref{prop:concave-generators} describing the generators of the set of concave functions \cite{Johansen:1974ut}, \cite{Bronsten:1978vs}.
Thus, it is unlikely that there are easy analogs of Proposition~\ref{prop:GJW-characterization} and, in the constrained case, Theorem~\ref{prop:rnxy-characterization}.  In fact, while \cite{Seijo:2011ko} and \cite{Lim:2012do} give proofs of consistency of the estimators, pointwise limit distribution results are still unknown.  Again, this is in part because of
the lack of simple generators for the class of multivariate concave functions, so that there is no analog of
Theorem~\ref{thm:characterization-compact}
in the constrained estimator case (or analog of the simpler process studied in
\cite{Groeneboom:2001fp,Groeneboom:2001jo}
in the unconstrained case).
Thus, when $d>1$, making progress in pointwise asymptotics for the estimators and in studying a likelihood ratio statistic  may require new tools or a different approach.

\appendix

\section{Appendix: Technical formulas and other results}
\label{sec:appendix}

Here is a statement of an integration by parts formulas for functions of bounded variation.  See, e.g., page 102 of \cite{Folland1999RealAnalysis} for the definition of bounded variation.
\begin{lemma}[\cite{Folland1999RealAnalysis}]
  \label{lem:integration-by-parts}
  Assume that $F$ and $G$ are of bounded variation on a  set $[a,b]$ where
  $-\infty < a < b < \infty$.
  If at least one of $F$ and $G$ is continuous, then
  \begin{equation*}
    \int_{(a,b]} FdG + \int_{(a,b]} GdF
    = F(b)G(b) - F(a)G(a).
  \end{equation*}
\end{lemma}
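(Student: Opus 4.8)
The plan is to realize both Stieltjes integrals as masses of the product of the Lebesgue--Stieltjes measures of $F$ and $G$ on the square $(a,b]^2$, and then compute that product measure two ways via Fubini's theorem, splitting the square along its diagonal. First I would reduce to the case where $F$ and $G$ are both nondecreasing. By the Jordan decomposition a function of bounded variation is a difference of two nondecreasing functions, and taking the total-variation function as one summand keeps a continuous function continuous (a BV function is continuous at a point exactly when its variation function is). Since both sides of the identity are bilinear in the pair $(F,G)$, the general statement follows from the nondecreasing case. For nondecreasing $F,G$ let $\mu_F,\mu_G$ be the associated finite nonnegative Lebesgue--Stieltjes measures on $(a,b]$, so $\mu_F((a,t])=F(t)-F(a)$ and likewise for $G$.

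The core computation is then a single application of Fubini. On one hand, the product measure of the whole square is
\[
(\mu_F\times\mu_G)\big((a,b]^2\big)=(F(b)-F(a))(G(b)-G(a)).
\]
On the other hand, I would split $(a,b]^2$ into the two triangles $T_1=\{(x,y):a<x\le y\le b\}$ and $T_2=\{(x,y):a<y<x\le b\}$, which partition the square. Integrating $x$ first on $T_1$ and $y$ first on $T_2$ gives
\[
(\mu_F\times\mu_G)(T_1)=\int_{(a,b]}(F(y)-F(a))\,dG(y),\qquad (\mu_F\times\mu_G)(T_2)=\int_{(a,b]}(G(x^-)-G(a))\,dF(x),
\]
where $G(x^-)$ denotes the left limit (since $T_2$ excludes the diagonal). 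Equating the two evaluations, expanding the products, and using the algebraic identity $F(b)G(b)-F(a)G(a)=(F(b)-F(a))(G(b)-G(a))+F(a)(G(b)-G(a))+G(a)(F(b)-F(a))$ yields
\[
\int_{(a,b]}F\,dG+\int_{(a,b]}G(\cdot^-)\,dF=F(b)G(b)-F(a)G(a).
\]

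The only place the continuity hypothesis enters — and the crux of the argument — is the passage from $G(\cdot^-)$ back to $G$ in the second integral. The difference $\int_{(a,b]}(G(x)-G(x^-))\,dF(x)$ equals the $(\mu_F\times\mu_G)$-mass of the diagonal $\{x=y\}$, namely $\sum_{x}(\Delta F(x))(\Delta G(x))$ summed over the common jump points. If $G$ is continuous its jumps vanish; if instead $F$ is continuous then $dF$ charges no point mass, so the countable jump set of $G$ is $\mu_F$-null. In either case this term is zero, so $\int_{(a,b]}G(\cdot^-)\,dF=\int_{(a,b]}G\,dF$ and the stated formula follows. Thus the assumption that at least one of $F,G$ is continuous is exactly what forces the diagonal to carry no product mass; everything else is the bilinear reduction and the bookkeeping of Fubini, and the argument parallels the treatment in \cite{Folland1999RealAnalysis}.
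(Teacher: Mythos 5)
Your proposal is correct, and it is essentially the proof in \cite{Folland1999RealAnalysis} itself: the paper gives no argument for this lemma, quoting it directly from that reference, and Folland's proof is exactly the Fubini/product-measure computation you describe, with the continuity hypothesis serving to annihilate the mass of the diagonal. Your reduction from bounded variation to the monotone case via the Jordan decomposition (noting that the variation function inherits continuity) and the bilinearity of both sides is also the standard bookkeeping, so nothing further is needed.
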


\begin{theorem}[\cite{Groeneboom:2001fp}, Theorem 2.1]
  \label{thm:GJW-process-uniqueness}
  Let $\sigma, a > 0$.
  Let $X(t) = \sigma W(t) - 4a t^3$ where $W(t)$ is standard two-sided Brownian motion starting from $0$, and let $Y$ be the integral of $X$ satisfying $Y(0)=0$.  Thus $Y_{a,\sigma}(t) =  \sigma \int_0^t W(s) ds - a t^4$ for $t \in \RR$.  Then, with probability $1$, there exists a uniquely defined random continuous function $H_{a,\sigma}$ satisfying the following:
  \begin{enumerate}[leftmargin=*]
  \item The function $H_{a,\sigma}$ satisfies $H_{a,\sigma}(t) \le Y(t)$ for all $ t\in \RR$.
  \item The function $H_{a,\sigma}$ has a concave second derivative, $\r_{a,\sigma} := H_{a,\sigma}''$.
  \item The function $H_{a,\sigma}$ satisfies $\int_{\RR} ( H_{a,\sigma}(t)-Y_{a,\sigma}(t) ) dH_{a,\sigma}^{(3)}(t)=0$.
  \end{enumerate}
\end{theorem}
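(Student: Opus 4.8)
The plan is to mirror the strategy already used for the value-constrained process in Theorems~\ref{thm:characterization-compact} and \ref{thm:characterization-RR-sec1}, specialized to the simpler unconstrained problem: first solve a least-squares problem on a compact interval, characterize and establish uniqueness of its solution, and then pass to the limit as the interval grows to $\RR$. Concretely, on $[-c,c]$ I would minimize $\phi_c(r) = \inv{2}\int_{-c}^c r^2\,d\lambda - \int_{-c}^c r\,dX$ over the convex set of concave functions $r$ that are bounded above and have bounded (or prescribed) endpoint values, where $X = X_{a,\sigma}$ from \eqref{eq:20} (with the $-12t^2$ drift replaced by $-4at^3$ in $X$, i.e.\ $-at^4$ in $Y_{a,\sigma}$). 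Existence of a minimizer $r_c$ follows exactly as in the proof of Proposition~\ref{prop:cpct-existence-uniqueness}: a bound on the maximal value reduces the search to a pointwise-compact class via Tychonoff together with closure of uniformly bounded concave functions under pointwise limits, and $\phi_c$ is pointwise-continuous by bounded convergence; uniqueness then follows from strict convexity of $\phi_c$ on this convex set.

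Next I would characterize $r_c$ by the variational/Fenchel optimality conditions of Proposition~\ref{prop:Fenchel}, rendered in the continuous problem through perturbation arguments as in the necessity and sufficiency proof of Theorem~\ref{thm:characterization-compact}. Letting $H_c$ be the primitive of the primitive of $r_c$ pinned to agree with $Y_{a,\sigma}$ at two touch points, these conditions become precisely the invelope conditions of the theorem: $H_c \le Y_{a,\sigma}$, the second derivative $H_c'' = r_c$ is concave, and $\int (H_c - Y_{a,\sigma})\,dH_c^{(3)} = 0$. Because there is no constraint $r(0)=0$ here, this step is genuinely simpler than Theorem~\ref{thm:characterization-compact}: there is a single global invelope rather than separate left- and right-side processes joined at $0$, so the ``middle equality'' Condition~\ref{item:middle-equality} disappears and only the global inequality and the complementary-slackness integral survive.

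The key analytic input is localization. The $-at^4$ drift in $Y_{a,\sigma}$ dominates the $\sigma\int_0^{\cdot} W$ fluctuation, which forces the touch set of $Y_{a,\sigma}$, and hence the knots of $r_c$, to remain within an $O_p(1)$ window about the origin uniformly in $c$ — the same mechanism underlying the $O_p(1)$ knot bound of Lemma~\ref{lem:TaunR-Op} and the touch-point lemmas inside the proof of Theorem~\ref{thm:characterization-RR-sec1}. Granting this, I would establish tightness of $H_c$ together with its first three derivatives in suitable metric spaces, extract a subsequential limit $H_{a,\sigma}$ by Prohorov's theorem, and verify by passing the compact conditions to the limit that $H_{a,\sigma}$ satisfies Conditions (1)--(3), with $\r_{a,\sigma} := H_{a,\sigma}''$ concave.

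Finally, for uniqueness on all of $\RR$ I would argue as in the closing steps of Theorem~\ref{thm:characterization-RR-sec1}. Given two continuous processes that both satisfy (1)--(3), if they share a sequence of common touch points of $Y_{a,\sigma}$ tending to $+\infty$ and another tending to $-\infty$, then on each interval between consecutive shared touch points both functions solve the same compact least-squares problem, with the shared points acting as endpoint pins, so they coincide there by the compact-case uniqueness and hence everywhere. The remaining case — ruling out a failure of shared touch points on one or both sides — is the step I expect to be the main obstacle; it is handled by a contradiction argument that exploits the complementary-slackness condition (3) together with the strict ordering $H_{a,\sigma} \le Y_{a,\sigma}$ to show that a discrepancy beyond the last common touch point is incompatible with both limits being global invelopes. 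This is precisely the delicate argument that the value-constrained analog inherits from Theorem~5.2 of \cite{Doss:2016ux}, and carrying it out in the unconstrained setting is where the real work lies.
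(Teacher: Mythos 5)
You should first be clear about what the paper offers as its ``own proof'' here: nothing. This statement is imported verbatim (up to the concave/convex sign convention and the scaling constants $a,\sigma$) from Theorem~2.1 of \cite{Groeneboom:2001fp}, as the theorem header indicates, and the appendix states it purely for completeness. So the relevant comparison is with the proof in that cited source, and with the paper's own proofs of the value-constrained analogs, Theorems~\ref{thm:characterization-compact} and~\ref{thm:characterization-RR-sec1}, which are modeled on it. Your outline does reconstruct the correct architecture of that argument: compact-interval least squares, existence and uniqueness by strict convexity, a variational characterization, localization of touch points, tightness and subsequential limits, and uniqueness through shared touch points. You are also right that the absence of the constraint $r(0)=0$ removes the left/right splitting and the ``middle equality'' condition.

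However, as a proof there are two genuine gaps, both at exactly the places you wave toward by analogy. First, the localization step --- that the $-at^4$ drift confines all touch points of $H_c$ and $Y_{a,\sigma}$ to an $O_p(1)$ window \emph{uniformly in} $c$ --- is not a routine transfer of Lemma~\ref{lem:TaunR-Op} (which is a finite-sample knot bound, proved by a perturbation argument specific to the empirical problem). In \cite{Groeneboom:2001fp} this step is the bulk of the work: one needs midpoint/cubic representations of $H$ between touch points (the analog of Lemma~\ref{lem:process-cubic-representation}) and then exponential bounds on the gaps between successive touch points; without it neither the tightness of $(H_c,H_c',H_c'',H_c^{(3)})$ nor the passage $c\to\infty$ can be executed. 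Second, and more seriously, uniqueness in the case where two candidate invelopes do \emph{not} share sequences of touch points tending to $\pm\infty$ is the heart of the theorem, and you explicitly defer it as ``where the real work lies.'' Uniqueness among processes with shared touch points (your compact-stitching step) does not imply uniqueness; the contradiction argument must actually be run, and it is delicate enough that even this paper, in the constrained setting, does not reproduce it but defers to Theorem~5.2 of \cite{Doss:2016ux}. A smaller structural issue: the endpoint pins $r(\pm c)=k$ are the right compactification for the constrained problem but not the natural one here; in the unconstrained problem one minimizes over all concave functions on $[-c,c]$, and the characterization then delivers equality of $H_c$, $Y$ and their derivatives at touch points near $\pm c$, which is what allows consistent stitching as $c$ grows. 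Keeping artificial pins $k$ saddles you with the additional, unaddressed task of showing their influence vanishes as $c\to\infty$.
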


\begin{theorem}[\cite{Groeneboom:2001jo}, Theorem 6.3]
  \label{thm:GJWb-regression-asymptotics}
  Suppose that the regression model \eqref{eq:regression-augmented} holds, that $\epsilon_{n,1},\ldots,\epsilon_{n,n}$ are i.i.d.\ with $E^{\epsilon_{n,1}^2 t} < \infty$ for some $t > 0$, that $r_0 \in \cC$, that $r_0''(x_0)<0$, and that $r_0''$ is continuous in a neighborhood of $x_0$.  Let Assumptions \ref{assm:design-density-1} and \ref{assm:local-uniform-design-2} hold.  Let $a := |r_0''(x_0)|/ 24$ and $\sigma^2 := \Var( \epsilon_{n,i})$.  Then
  \begin{equation*}
    n^{2/5}
    ( \rn(x_0 + tn^{-1/5}) - r_0(x_0) - r_0'(x_0) t n^{-1/5} )
    \to_d \r_{a,\sigma}(t)
  \end{equation*}
  in $L^p[-K,K]$ for all $K > 0$.
\end{theorem}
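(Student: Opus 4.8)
The plan is to adapt the invelope method of Groeneboom, Jongbloed, and Wellner, of which the present statement is essentially a restatement. First I would reduce to the canonical situation already exploited in the proof of Theorem~\ref{thm:constrained-regression-asymptotics-sec1}: take $x_0 = 0$, $r_0(0) = 0$, $r_0'(0) = 0$ by translation, and introduce the local coordinate $x_n(t) := t n^{-1/5}$. The quantity to control is the rescaled estimator $n^{2/5} \rn(x_n(t))$, which I would realize as a second derivative of a twice-integrated process, so that the concavity characterization becomes the statement that one function lies below a Gaussian-type process with equality at the knots.

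Next I would build the localized cumulative-sum processes. With $\SS_n(t) := n^{-1} \sum_{i=1}^n \Wni \one_{\{\xni \le t\}}$, set $Y_n(t) := \int_0^{x_n(t)} (\SS_n(v) - \SS_n(0))\, dv$, and define an integrated fitted-value process $H_n$ by integrating $\rn$ twice against $dF_n$ together with a linear term fixing the additive constants, in direct analogy with the processes $\YnRloc$ and $\HnR$ of Theorem~\ref{thm:constrained-regression-asymptotics-sec1} but now without the split at $0$, since here there is no interpolation constraint. The finite-sample characterization, Proposition~\ref{prop:GJW-characterization}, then translates (after summation by parts and the design approximation of Assumption~\ref{assm:local-uniform-design-2}) into the invelope relations $H_n(t) - Y_n(t) \le 0$ for all $t$, with equality at the knots of $\rn$; equivalently $\int (H_n - Y_n)\, d H_n^{(3)} = 0$.

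I would then establish the two convergence inputs. The first is weak convergence of the data process: using the sub-Gaussian tail assumption together with Assumptions~\ref{assm:design-density-1} and \ref{assm:local-uniform-design-2}, an empirical-process argument and a Taylor expansion of $r_0$ about $0$ give $Y_n \to_d Y_{a,\sigma}$ with $a = |r_0''(0)|/24$ and $\sigma^2 = \Var \epsilon_{n,i}$, uniformly on compacta. The second is tightness: a knot-localization argument in the spirit of Lemma~\ref{lem:TaunR-Op} shows that the knots of $\rn$ nearest $0$ are $O_p(n^{-1/5})$, which bounds the slope of $\rn$ near $0$ and yields tightness of $H_n$ together with its first three derivatives. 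By Prohorov's theorem every subsequence has a further subsequence along which $(H_n, Y_n)$ converges to a limit $(H, Y_{a,\sigma})$; passing the invelope relations to the limit shows $H \le Y_{a,\sigma}$, that $H''$ is concave, and that $\int (H - Y_{a,\sigma})\, dH^{(3)} = 0$.

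Finally I would invoke uniqueness. By Theorem~\ref{thm:GJW-process-uniqueness} the invelope $H_{a,\sigma}$ of $Y_{a,\sigma}$ is the unique process with these three properties, so the subsequential limit is forced to equal $H_{a,\sigma}$ independently of the subsequence; hence the whole sequence converges, and differentiating twice gives $n^{2/5} \rn(x_n(\cdot)) \to_d \r_{a,\sigma} = H_{a,\sigma}''$ in $L^p[-K,K]$. The main obstacle is the tightness step: controlling the location of the knots that bracket $0$, and thereby the derivatives of $H_n$, requires the delicate perturbation estimates, and it is there (rather than in the weak convergence of $Y_n$) that the sub-Gaussian error condition and Assumption~\ref{assm:local-uniform-design-2} are genuinely used.
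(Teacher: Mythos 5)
Your proposal is correct and follows essentially the same route as the source this theorem is quoted from (Theorem~6.3 of \cite{Groeneboom:2001jo}): the paper itself states this result without proof in the appendix, and its own proof of the constrained analog, Theorem~\ref{thm:constrained-regression-asymptotics-sec1}, uses exactly the scheme you describe --- localization, translation of the finite-sample characterization (Proposition~\ref{prop:GJW-characterization}) into invelope relations for $(H_n, Y_n)$, weak convergence of $Y_n$ to $Y_{a,\sigma}$, knot-localization for tightness, subsequential limits via Prohorov, and identification of the limit through the uniqueness in Theorem~\ref{thm:GJW-process-uniqueness}. No gaps of substance; your closing observation that the knot/tightness step is where the hard work lies is consistent with how both the cited proof and the paper's constrained analog (via Lemma~\ref{lem:TaunR-Op}) are organized.
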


\end{document}